\documentclass{amsart}

\usepackage{amsmath,amsthm,amssymb,mathtools}
\usepackage[linesnumbered,vlined]{algorithm2e}
\usepackage{url}
\usepackage{comment}

\theoremstyle{plain}
\newtheorem{theorem}{Theorem}
\newtheorem{lemma}[theorem]{Lemma}
\newtheorem{prop}[theorem]{Proposition}
\newtheorem{corollary}[theorem]{Corollary}
\newtheorem{definition}[theorem]{Definition}
\newtheorem{alg}[theorem]{Algorithm}
\theoremstyle{remark}
\newtheorem{example}[theorem]{Example}
\newtheorem{remark}[theorem]{Remark}
\numberwithin{theorem}{section}
\numberwithin{equation}{section}

\newcommand{\Q}{\mathbb{Q}}
\newcommand{\N}{\mathbb{N}}
\newcommand{\Z}{\mathbb{Z}}
\newcommand{\F}{\mathbb{F}}
\newcommand{\fraka}{\mathfrak{a}}
\newcommand{\frakb}{\mathfrak{b}}

\newcommand{\frakp}{\mathfrak{p}}
\newcommand{\frakq}{\mathfrak{q}}

\newcommand{\OO}{\mathcal{O}}
\newcommand{\frakA}{\mathfrak{A}}

\newcommand{\frakP}{\mathfrak{P}}
\newcommand{\frakQ}{\mathfrak{Q}}
\newcommand{\calI}{\mathcal{I}}

\newcommand{\calE}{\mathcal{E}}

\newcommand{\calH}{\mathcal{H}}
\newcommand{\calL}{\mathcal{L}}
\newcommand{\calM}{\mathcal{M}}
\newcommand{\calS}{\mathcal{S}}
\newcommand{\isom}{\cong}

\DeclareMathOperator{\Nr}{\mathcal{N}}
\DeclareMathOperator{\lcm}{lcm}
\DeclareMathOperator{\JJ}{J}

\begin{document}
\bibliographystyle{plain}
\title{Enumerating orders in number fields}

\author{Markus Kirschmer} 
\address{Universit\"at Bielefeld, Postfach 100131, 33501 Bielefeld, Germany}
\email{markus.kirschmer@math.uni-bielefeld.de}
\author{J\"urgen Kl\"uners}
\address{Universit\"at Paderborn, Fakult\"at EIM, Institut f\"ur Mathematik, Warburger Str. 100, 33098 Paderborn, Germany}
\email{klueners@math.uni-paderborn.de}

\subjclass[2020]{11R54,11Y40}
\keywords{Orders, Algorithms}

\begin{abstract}
We arrange the orders in an algebraic number field in a tree.
This tree can be used to enumerate all orders of bounded index in the maximal order as well as the orders over some given order.
\end{abstract}

\maketitle

\section{Introduction}

Let $K$ be an algebraic number field of degree $n$.
An order in $K$ is a subring which has rank $n$ as a $\Z$-module.
Every order in $K$ is contained in the ring of integers $\Z_K$, which is therefore the maximal order of $K$.
The Chinese Remainder Theorem shows that for a description of all orders of $K$,
it suffices to describe the orders whose index in $\Z_K$ is a $p$-power of some prime $p$.

Of course in practice, it would be necessary to restrict the enumeration to finitely many orders in $K$, for example the orders whose index in $\Z_K$ is bounded by some constant or the orders above some fixed order $\Lambda$.
For example, Marseglia's algorithm \cite{Marseglia,Marseglia2} for computing the ideal class monoid of an order $\Lambda$, i.e. the monoid of isomorphism classes of all fractional ideals of $\Lambda$ (invertible or not) requires all orders above $\Lambda$.

A direct approach to the first problem is to enumerate the maximal suborders of $\Z_K$ and then iterate this process.
Similarly, for enumerating the overorders of $\Lambda$, Hofmann and Sircana~\cite{Hofmann} propose to enumerate the minimal overorders and then iterate this process.
It is worthwhile to mention that the algorithm of Hofmann and Sircana works for orders in (not necessarily commutative) separable algebras over $K$.

These previous methods to compute orderorders and suborders have the following two issues:
\begin{itemize}
\item They enumerate the same orders over and over again.
\item The computation of minimal overorders or maximal suborders require to walk over all subspaces of some finite dimensional vector space over $\F_p$.
\end{itemize}

We propose a much more structural approach to solve both problems by arranging the orders whose index in $\Z_K$ is a $p$-power in a tree.
This immediately ensures that each order will only be found once.
In light of Zassenhaus' Round 2 algorithm, a natural choice for this tree is to say that $\OO$ is a successor of the ring of multipliers $M(\JJ_p(\OO))$ of its $p$-radical $\JJ_p(\OO)$, see Definition~\ref{def:pradical}.
This turns the set of orders in $\Z_K$ whose index is a $p$-power into a tree with $\Z_K$ as its root.

The fact that $\OO$ is a successor of $M(\JJ_p(\OO))$ only depends on its $p$-radical $J:= \JJ_p(\OO)$.
Hence all orders with $p$-radical $J$ will be successors of $M(J)$, so they can be grouped together.
In order to enumerate this tree, it suffices to solve the following tasks:
\begin{itemize}
\item Determine all orders in $K$ with a given $p$-radical $J$.
\item Enumerate the $p$-radicals of all successors of a given order $\OO'$.
\end{itemize}
The first task can be solved efficiently by relating the orders with $p$-radical $J$ to all orders with $p$-radical $\JJ_p(\Z_K)$, see the last paragraph of Section~\ref{sec:rad}.
This effectively turns the enumeration of all orders of $p$-power index in $\Z_K$ into an enumeration of their $p$-radicals.
However, we do not know how to find all $p$-radicals $J$ with $M(J) = \OO'$ without walking over all subspaces of the $\F_p$-space $\JJ_p(\OO')/p\OO'$.

To overcome this obstacle, we amend our definition of successors as follows.
We say that $\OO$ is a successor of the ring of multipliers $M(\JJ_p(\OO)^{n-1})$.
This again yields a tree structure on the set of orders in $\Z_K$ with $p$-power index.
Set $J:= \JJ_p(\OO)$.
By a result of Dade, Taussky and Zassenhaus, cf. \cite[Theorem~C]{DTZ}, the ideal $J^{n-1}$ becomes invertible in $\OO':= M(J^{n-1})$.
Assuming that $K$ is unramified at $p$, we moreover have $J^{n-1} = p^{n-1} \OO'$ or equivalently $H^{n-1} = \OO'$ where $H:= p^{-1} J$.
The enumeration of the sublattices $H$ of $\OO'$ with $H^{n-1} = \OO'$ can be arranged in a way that requires only very few searches in vector spaces, see Remarks~\ref{rem:eff1} and \ref{rem:eff2} for details.
The ramified case is more involved and is discussed in Section 5.

The paper is organized as follows.
In Section 2, we recall basic definitions of orders in number fields and the relations of an order to its maximal suborders.
In Section 3, we recall the notion of $p$-radicals and give an algorithm to compute all orders with a given $p$-radical.
The next two sections give our algorithm to compute all orders in $K$ with bounded index in the maximal order $\Z_K$.
Section 4 focuses on the general idea and deals with the generic i.e. unramified case while Section 5 discusses the ramified case.
The last section shows how to augment our approach to compute the overorders of a given order $\Lambda$.
It also contains some examples and compares the running time of our algorithm to the one of Hofmann and Sircana~\cite{Hofmann}.

\subsection*{Acknowledgements}
This work was funded by the Deutsche Forschungsgemeinschaft (DFG, German Research Foundation) -- Project-ID 491392403 -- TRR 358.
The authors thank Tommy Hofmann and Stefano Marseglia for helpfull comments.

\section{Orders}

Let $K$ be an algebraic number field of degree $n$.

\begin{definition}
Let $R= \Z$ or a completion of $\Z$ at a prime $p$.
An $R$-lattice in $K$ is a finitely generated $R$-submodule of $K$. It is said to be full, if it contains a $\Q$-basis of $K$.
An $R$-order is a subring $\OO$ of $K$ which is a full $R$-lattice.
If $R=\Z$, we usually only speak of lattices and orders.

Let $I$, $J$ be full $R$-lattices and $\OO$ an $R$-order in $K$.
The product $IJ$ is the $R$-module generated by $\{ij \mid i \in I, j \in J\}$. It is again a full $R$--lattice.
We say that $I$ is a fractional ideal of $\OO$, if $I\OO \subseteq I$.
Note that the full lattice $I$ is an ideal of $\OO$ in the usual sense if and only if it is a fractional ideal of $\OO$ and $I \subseteq \OO$.
\end{definition}

The ring of integers $\Z_K$ in $K$ is an order and contains any other order in $K$. Thus $\Z_K$ is the so-called maximal order of $K$.
Given a subset $S$ of $\Z_K$, we denote by $\langle S \rangle$ the smallest order in $K$ that contains $S$.


\begin{remark} Let $I$, $J$ be full $R$-lattices in $K$.
\begin{enumerate}
\item The \emph{colon}
\[ (I:J):= \{ x \in K \mid xJ \subseteq I \} \]
is a full $R$-lattice in $K$.
\item
The \emph{multiplicator ring}
\[ M(I):= (I : I) = \{x \in K \mid x I \subseteq I\} \]
is the largest $R$-order in $K$ for which $I$ is a fractional ideal.
\end{enumerate}
\end{remark}

Let $I$ be a full $\Z$-lattice and $\OO$ be a $\Z$-order in $K$.
Given a prime $p$, we denote by $I_p:= I \otimes_\Z \Z_p$ the completion of $I$ at $p$.
Then $I_p$ is a $\Z_p$-lattice and $\OO_p$ is a $\Z_p$-order in $K$.
Note that computing products, sums and colon ideals commutes with completion.

\begin{definition}\label{def:pradical}
Let $\OO$ be an order in $K$.
The intersection of the prime ideals of $\OO$ over $p$ is the $p$-radical $\JJ_p(\OO)$ of $\OO$.
\end{definition}

If $R$ is a commutative ring, we denote by $\JJ(R)$ its Jacobson radical, i.e. the intersection of its maximal ideals.

\begin{remark}
Let $\OO$ be an order in $K$ and $p$ be a prime.
Then $\JJ_p(\OO)$ is the unique lattice between $p\OO$ and $\OO$ such that $\JJ_p(\OO) / p\OO = \JJ(\OO/p\OO)$.
Further, the completion of $\JJ_p(\OO)$ at $p$ is the Jacobson radical $\JJ(\OO_p)$.
\end{remark}

\begin{lemma}\label{JJmeet}
Let $\Lambda \subseteq \OO$ be orders in $K$ and $p$ be a prime.
Then $\JJ_p(\Lambda) = \JJ_p(\OO) \cap \Lambda$.
\end{lemma}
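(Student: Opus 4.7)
The plan is to reduce the identity to a statement about prime ideals and then invoke the lying-over theorem. First I would recall from Definition~\ref{def:pradical} that
\[ \JJ_p(\OO) = \bigcap_{\frakP \mid p} \frakP \quad \text{and} \quad \JJ_p(\Lambda) = \bigcap_{\frakp \mid p} \frakp, \]
where $\frakP$ and $\frakp$ run over the primes of $\OO$ and $\Lambda$ above $p$, respectively.

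Next I would observe that $\OO$ is integral over $\Lambda$ (both are finitely generated as $\Z$-modules, hence $\OO$ is a finitely generated $\Lambda$-module), so the lying-over theorem applies to the extension $\Lambda \subseteq \OO$. This yields two facts. On the one hand, for every prime $\frakP$ of $\OO$ above $p$, the contraction $\frakP \cap \Lambda$ is a prime of $\Lambda$ that still contains $p$. On the other hand, every prime $\frakp$ of $\Lambda$ above $p$ arises in this way: lying-over furnishes a prime $\frakP$ of $\OO$ with $\frakP \cap \Lambda = \frakp$, and $p \in \frakp \subseteq \frakP$ forces $\frakP$ to lie above $p$. Hence the set $\{\frakP \cap \Lambda : \frakP \mid p\}$ coincides with the set $\{\frakp : \frakp \mid p\}$, possibly with repetitions.

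Finally I would intersect and use that contraction commutes with intersection of ideals:
\[ \JJ_p(\OO) \cap \Lambda = \Big( \bigcap_{\frakP \mid p} \frakP \Big) \cap \Lambda = \bigcap_{\frakP \mid p} (\frakP \cap \Lambda) = \bigcap_{\frakp \mid p} \frakp = \JJ_p(\Lambda). \]
The only non-formal ingredient is lying-over, which is standard for integral extensions; the other steps are purely set-theoretic. I do not expect any real obstacle.
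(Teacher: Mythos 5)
Your proof is correct and takes essentially the same route as the paper: both proofs reduce the identity to showing that contractions of primes of $\OO$ above $p$ are exactly the primes of $\Lambda$ above $p$ (lying over in one direction, contraction preserving maximality in the other), and then commute the intersection with the contraction to $\Lambda$. The paper merely organizes the primes of $\OO$ as $\frakP_{i,j}$ indexed by their contraction $\frakp_i$ rather than invoking lying over by name, but the substance is identical.
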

\begin{proof}
Let $\frakp_1,\dots,\frakp_r$ be the maximal ideals of $\Lambda$ over $p$.
For $1 \le i \le r$ let $\frakP_{i,1},\dots,\frakP_{i,r_i}$ be the maximal ideals of $\OO$ over $\frakp_i$.
Then 
\[ \JJ_p(\Lambda) = \bigcap_i \frakp_i = \bigcap_{i,j} (\frakP_{i,j} \cap \Lambda) = \bigcap_{i,j} \frakP_{i,j} \cap \Lambda = \JJ_p(\OO) \cap \Lambda\]
as claimed.
\end{proof}

We also need a description of the maximal suborders of a given order $\OO$.
The following result is \cite[Proposition 5.3]{Hofmann} which is based on \cite{French}.
We give an elementary self-contained proof.

\begin{lemma}\label{MSOcolon}
Let $\OO$ be an order in $K$ and $\Lambda \subseteq \OO$ a maximal suborder.
Then $(\Lambda:\OO)$ is a maximal ideal of $\Lambda$.
\end{lemma}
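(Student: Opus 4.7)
The plan is to analyze the conductor $\mathfrak{f} := (\Lambda:\OO) = \{x \in K \mid x\OO \subseteq \Lambda\}$, which is automatically an $\OO$-ideal contained in $\Lambda$ and is in fact the largest $\OO$-ideal inside $\Lambda$. Since $\Lambda \subsetneq \OO$, the element $1$ does not lie in $\mathfrak{f}$, so $\mathfrak{f}$ is a proper ideal of $\Lambda$; in particular $\mathfrak{f}$ is contained in at least one maximal ideal of $\Lambda$. The strategy is then to show that any maximal ideal $\mathfrak{m}$ of $\Lambda$ with $\mathfrak{f} \subseteq \mathfrak{m}$ must coincide with $\mathfrak{f}$, which will force $\mathfrak{f}$ itself to be maximal.

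Given such an $\mathfrak{m}$, I would form the subset $\Lambda' := \Lambda + \mathfrak{m}\OO \subseteq \OO$ and verify that it is a subring: closure under multiplication works because $\mathfrak{m}\OO$ is an ideal of $\OO$, so all cross terms land back in $\mathfrak{m}\OO$. By the assumed maximality of $\Lambda$ as a subring of $\OO$, either $\Lambda' = \Lambda$ or $\Lambda' = \OO$. In the first case $\mathfrak{m}\OO \subseteq \Lambda$; since $\mathfrak{m}\OO$ is an $\OO$-ideal, the maximality property of the conductor yields $\mathfrak{m} \subseteq \mathfrak{m}\OO \subseteq \mathfrak{f}$, and combined with $\mathfrak{f}\subseteq\mathfrak{m}$ this gives $\mathfrak{m} = \mathfrak{f}$, exactly what we want.

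It remains to rule out the second case. Assuming $\mathfrak{m}\OO + \Lambda = \OO$, the finitely generated $\Lambda$-module $M := \OO / \Lambda$ satisfies $\mathfrak{m} M = M$, and the determinant form of Nakayama's lemma produces $s \in 1 + \mathfrak{m}$ with $sM = 0$, i.e.\ $s\OO \subseteq \Lambda$. Such an $s$ lies in $\mathfrak{f} \subseteq \mathfrak{m}$, contradicting $s \equiv 1 \pmod{\mathfrak{m}}$. Hence the case $\Lambda' = \OO$ is impossible and $\mathfrak{m} = \mathfrak{f}$, so $\mathfrak{f}$ is maximal. The main small obstacle I anticipate is the clean verification that $\Lambda + \mathfrak{m}\OO$ is a subring (rather than only an additive subgroup) and invoking Nakayama in the correct form on the non-local ring $\Lambda$; both are standard but each deserves a line of justification in the write-up.
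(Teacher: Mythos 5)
Your proof is correct, and while it shares the key construction with the paper's proof — enlarging $\Lambda$ by an $\OO$-ideal of the form $(\text{ideal of }\Lambda)\cdot\OO$ and appealing to the maximality of $\Lambda$ — the way you finish differs in an interesting way. The paper argues by contradiction: assuming $\fraka := (\Lambda:\OO)$ is not maximal, it selects an ideal $\frakb$ with $\fraka \subsetneq \frakb \subsetneq \Lambda$ and $\frakb/\fraka$ a simple $\Lambda$-module (say annihilated by a maximal ideal $\frakp$), forms $\Gamma = \Lambda + \frakb\OO$, shows $\Gamma \neq \Lambda$ from $\frakb \not\subseteq \fraka$, and rules out $\Gamma = \OO$ by the purely multiplicative chain $\frakp\frakb\OO \subseteq \fraka\OO \subseteq \Lambda$. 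You instead take any maximal ideal $\frakm \supseteq \frakf$, form $\Lambda' = \Lambda + \frakm\OO$, and use the dichotomy forced by maximality directly; in the case $\Lambda' = \Lambda$ the conductor's characterization as the largest $\OO$-ideal in $\Lambda$ immediately gives $\frakm \subseteq \frakf$, and in the case $\Lambda' = \OO$ you invoke the determinant (Cayley--Hamilton) form of Nakayama's lemma on the finitely generated $\Lambda$-module $\OO/\Lambda$ to produce $s \in 1 + \frakm$ annihilating $\OO/\Lambda$, hence $s \in \frakf \subseteq \frakm$, a contradiction. Your route is a touch shorter and leans on a standard tool; the paper's proof is slightly more elementary (no Nakayama, only containment-chasing) and is tailored so that the intermediate order $\Gamma$ is visibly strictly between $\Lambda$ and $\OO$. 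Both verifications you flag as worth spelling out — that $\Lambda + \frakm\OO$ is a subring because $\frakm\OO$ is an $\OO$-ideal, and that the non-local form of Nakayama applies — are exactly right and are the only places where care is needed.
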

\begin{proof}
The colon $\fraka = (\Lambda : \OO)$ is an ideal of $\Lambda$. Suppose it is not maximal.
Then there exists some ideal $\fraka \subsetneq \frakb \subsetneq \Lambda$ such that $\frakb /\fraka \isom \Lambda/\frakp$ with a maximal ideal $\frakp$ of $\Lambda$.
Hence $\frakp \frakb \subseteq \fraka$.
The set $\Gamma:= \Lambda + \frakb\OO$ is an order which satisfies $\Lambda \subseteq \Gamma \subseteq \OO$.
The fact that $\frakb\OO \not\subseteq \Lambda$ shows that $\Gamma \ne \Lambda$.
From $\frakp \frakb \OO \subseteq \fraka \OO \subseteq \Lambda$ we see that $(\Lambda : \Gamma) = \{ x \in K \mid x (\Lambda + \frakb \OO) \subseteq \Lambda \}$ contains $\frakp$.
In particular, $\Gamma \ne \OO$, contradicting the fact that $\Lambda$ is a maximal suborder of $\OO$.
\end{proof}

\begin{corollary}
Let $\Lambda \subseteq \OO$ be orders in $K$. The following statements are equivalent:
\begin{enumerate}
\item $\Lambda$ is a maximal suborder of $\OO$.
\item $\frakp:= (\Lambda : \OO)$ is a maximal ideal of $\Lambda$ and $\OO/\frakp$ is a minimal algebra over the field $\Lambda/\frakp$.
\end{enumerate}
\end{corollary}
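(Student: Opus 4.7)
The plan is to use the fact that $\frakp := (\Lambda:\OO)$ is simultaneously an ideal of $\Lambda$ and of $\OO$, so that the correspondence theorem identifies subrings $\Gamma$ with $\Lambda \subseteq \Gamma \subseteq \OO$ with subrings of $\OO/\frakp$ containing $\Lambda/\frakp$. Once this is set up, both implications become short applications of Lemma~\ref{MSOcolon} and the minimality hypothesis.

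The preliminary observation to establish is that $\frakp$ is an ideal of $\OO$ (not just of $\Lambda$). For $x \in \frakp$ and $y \in \OO$ one has $(xy)\OO = x(y\OO) \subseteq x\OO \subseteq \Lambda$, so $xy \in \frakp$; since $\frakp \subseteq \Lambda \subseteq \OO$, this makes $\frakp$ a two-sided ideal of $\OO$. Consequently $\Lambda/\frakp$ embeds into $\OO/\frakp$, and any intermediate subring $\Lambda \subseteq \Gamma \subseteq \OO$ is automatically an order (it contains $\Lambda$, which already has rank $n$) and descends to a $(\Lambda/\frakp)$-subalgebra $\Gamma/\frakp$ of $\OO/\frakp$. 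Conversely, any $(\Lambda/\frakp)$-subalgebra of $\OO/\frakp$ lifts to such an intermediate order, since $\frakp \subseteq \Lambda$ ensures that the preimage under $\OO \to \OO/\frakp$ of $\Lambda/\frakp$ is exactly $\Lambda$.

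For $(1) \Rightarrow (2)$: Lemma~\ref{MSOcolon} gives that $\frakp$ is a maximal ideal of $\Lambda$, so $\Lambda/\frakp$ is a field. If $B$ is any $(\Lambda/\frakp)$-subalgebra of $\OO/\frakp$, the correspondence above lifts it to an order $\Gamma$ with $\Lambda \subseteq \Gamma \subseteq \OO$. Maximality forces $\Gamma \in \{\Lambda,\OO\}$, hence $B \in \{\Lambda/\frakp,\OO/\frakp\}$, which is the minimality of $\OO/\frakp$ as a $\Lambda/\frakp$-algebra.

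For $(2) \Rightarrow (1)$: let $\Lambda \subseteq \Gamma \subseteq \OO$ be an order. Then $\Gamma/\frakp$ is a $(\Lambda/\frakp)$-subalgebra of $\OO/\frakp$, so by minimality equals either $\Lambda/\frakp$ or $\OO/\frakp$. Since $\frakp \subseteq \Lambda \subseteq \Gamma$, these two possibilities pull back to $\Gamma = \Lambda$ and $\Gamma = \OO$ respectively, which proves that $\Lambda$ is a maximal suborder of $\OO$.

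The argument involves no real obstacle once one checks that $\frakp$ is a two-sided ideal of $\OO$; the content of the corollary is really Lemma~\ref{MSOcolon} combined with the correspondence theorem for the quotient $\OO \twoheadrightarrow \OO/\frakp$.
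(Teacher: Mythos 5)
Your proof is correct, and it is essentially the argument the paper intends (the corollary is stated without proof immediately after Lemma~\ref{MSOcolon}): observe that $\frakp=(\Lambda:\OO)$ is an ideal of both $\Lambda$ and $\OO$, invoke Lemma~\ref{MSOcolon} for the forward direction, and let the correspondence theorem for $\OO\twoheadrightarrow\OO/\frakp$ translate ``no intermediate order'' into ``no intermediate $\Lambda/\frakp$-subalgebra.'' The preliminary check that $\frakp$ really is a two-sided ideal of $\OO$ and that intermediate subrings are automatically orders are exactly the details worth spelling out, and you have them right.
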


The minimal algebras $A$ over a field $k$ have been classified in \cite[Lemme 1.2]{French}:
\begin{enumerate}
\item $A$ is a field extension of $k$ of prime degree.
\item $A \isom k \times k$.
\item $A \isom k[X]/(X^2)$.
\end{enumerate}

This gives the following description of maximal suborders, see also \cite[Proposition 5.5]{Hofmann}.

\begin{prop}\label{exttype}
Let $\OO$ be an order in $K$ and let $\Lambda$ be a maximal suborder of $\OO$.
Let $p$ be the prime contained in $\frakp:= (\Lambda : \OO)$.
Then exactly one of the following cases occur:
\begin{enumerate}
\item $\frakp$ is a maximal ideal of $\OO$. Then $\OO/\frakp$ is a field extension of $\Lambda/\frakp$ of prime degree.
\item There are exactly two distinct maximal ideals $\frakP_1$ and $\frakP_2$ of $\OO$ over $\frakp$.
Then 
 $\OO/\frakP_i \isom \Lambda / \frakp$ and $\frakp = \frakP_1 \cap \frakP_2$ which implies $\OO/ \frakp = \OO/\frakP_1 \oplus \OO/\frakP_2 \isom \Lambda / \frakp \oplus \Lambda / \frakp$.
\item There is a unique maximal ideal $\frakP$ of $\OO$ over $\frakp$ with $\OO/\frakP \isom \Lambda/\frakp$ and $\frakP^2 \subseteq \frakp \subsetneq \frakP$.
\end{enumerate}
In the first two cases, we have $\JJ_p(\Lambda) = \JJ_p(\OO)$ whereas in the last case, we have $\JJ_p(\Lambda) \subsetneq \JJ_p(\OO)$.
\end{prop}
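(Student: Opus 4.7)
The plan is to first read off the ideal structure in each case from the classification of minimal algebras, and then handle the $p$-radical comparison via a colon identity for $\frakp$. A preliminary observation I would record is that $\frakp=(\Lambda:\OO)$ is an ideal not only of $\Lambda$ but also of $\OO$: for $x\in\frakp$ and $y\in\OO$ one has $(xy)\OO=x(y\OO)\subseteq x\OO\subseteq\Lambda$. By the corollary preceding the proposition, $\OO/\frakp$ is then a minimal algebra over $k:=\Lambda/\frakp$, so exactly one of the three cases from \cite[Lemme~1.2]{French} occurs, and the maximal ideals of $\OO$ above $\frakp$ correspond to those of $\OO/\frakp$. In (1), $\OO/\frakp$ is a field, so $\frakp$ itself is the unique maximal ideal of $\OO$ above it, with the claimed prime-degree residue extension. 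In (2), the two maximal ideals of $k\times k$ lift to distinct $\frakP_1,\frakP_2$ with $\OO/\frakP_i\cong k$ and $\frakP_1\cap\frakP_2=\frakp$. In (3), the unique nilpotent maximal ideal of $k[X]/(X^2)$ lifts to $\frakP$ with $\OO/\frakP\cong k$, and $X^2=0$ in $k[X]/(X^2)$ translates to $\frakP^2\subseteq\frakp\subsetneq\frakP$. This half is essentially a translation of the classification.

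For the $\JJ_p$ statements I would apply Lemma~\ref{JJmeet} to rewrite $\JJ_p(\Lambda)=\JJ_p(\OO)\cap\Lambda$; thus the desired equality (respectively strict inclusion) is equivalent to $\JJ_p(\OO)\subseteq\Lambda$ (respectively $\JJ_p(\OO)\not\subseteq\Lambda$). Since $\JJ_p(\OO)$ is an $\OO$-ideal, the containment $\JJ_p(\OO)\subseteq\Lambda$ is in turn equivalent to $\JJ_p(\OO)\cdot\OO\subseteq\Lambda$, i.e.\ to $\JJ_p(\OO)\subseteq(\Lambda:\OO)=\frakp$. The entire question therefore reduces to deciding whether $\JJ_p(\OO)\subseteq\frakp$. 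In cases (1) and (2) this is immediate because $\frakp$ is itself an intersection of maximal ideals of $\OO$ above $p$: namely $\frakp$ alone in (1) and $\frakP_1\cap\frakP_2$ in (2).

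The only real obstacle is the negative statement in case (3), i.e.\ showing $\JJ_p(\OO)\not\subseteq\frakp$. I would argue that any maximal ideal $\frakQ\ne\frakP$ of $\OO$ above $p$ contracts in $\Lambda$ to a maximal ideal different from $\frakp$ (otherwise $\frakQ$ would be a maximal ideal of $\OO$ above $\frakp$, and uniqueness of $\frakP$ would force $\frakQ=\frakP$), so $\frakp\not\subseteq\frakQ$ and hence $\frakp+\frakQ=\OO$ by maximality of $\frakQ$. Iterating the elementary inclusion $(A+B)(A+C)\subseteq A+BC$ yields $\frakp+\prod_{\frakQ\ne\frakP}\frakQ=\OO$, from which a standard one-line colon computation gives $(\frakp:\prod_{\frakQ\ne\frakP}\frakQ)=\frakp$. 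Since pairwise distinct maximal ideals of a commutative ring are comaximal, the intersection defining $\JJ_p(\OO)$ coincides with the product $\frakP\cdot\prod_{\frakQ\ne\frakP}\frakQ$; hence $\JJ_p(\OO)\subseteq\frakp$ would force $\frakP\subseteq\frakp$, contradicting $\frakp\subsetneq\frakP$. Verifying this chain of comaximality and colon identities is the main step of the proof.
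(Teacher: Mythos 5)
The paper offers no explicit proof of this proposition: the casework in (1)--(3) is taken as an immediate translation of the classification of minimal algebras in \cite[Lemme~1.2]{French} via the preceding corollary, and the $\JJ_p$ comparison is likewise left to the reader. Your argument correctly supplies the missing details, and the structure---observe $\frakp$ is an $\OO$-ideal, read off the maximal ideals of $\OO$ over $\frakp$ from the isomorphism type of $\OO/\frakp$, then reduce the radical comparison to the single containment $\JJ_p(\OO)\subseteq\frakp$ via Lemma~\ref{JJmeet} and the $\OO$-ideal property of $\JJ_p(\OO)$---is exactly the intended route.

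One small imprecision is worth flagging. With the paper's convention the colon $(\frakp:\frakb)=\{x\in K\mid x\frakb\subseteq\frakp\}$ is taken inside $K$, and then the identity $(\frakp:\prod_{\frakQ\ne\frakP}\frakQ)=\frakp$ you invoke is \emph{not} correct in general: for a non-maximal prime $\frakQ$ the local colon $(\OO_\frakQ:\frakQ\OO_\frakQ)$ properly contains $\OO_\frakQ$, so $(\frakp:\frakb)$ can pick up elements outside $\OO$. What is true, and what your ``write $1=a+b$'' computation actually proves, is $(\frakp:\frakb)\cap\OO=\frakp$. Since you only apply the containment to $\frakP\subseteq\OO$, the conclusion $\frakP\subseteq\frakp$ still follows and the contradiction goes through, but the identity should be stated for the ring colon (or with the intersection). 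Alternatively, case (3) goes through in one line by localizing at $\frakP$: there $\JJ_p(\OO)\OO_\frakP=\frakP\OO_\frakP\supsetneq\frakp\OO_\frakP$ because $\frakp$ is $\frakP$-primary and $\frakp\subsetneq\frakP$, so $\JJ_p(\OO)\not\subseteq\frakp$ without any comaximality or colon bookkeeping.
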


If $\Lambda$ is a maximal suborder of $\OO$, we define the type of the ring extension $\OO/\Lambda$ as (1), (2) or (3) depending on which case in Proposition~\ref{exttype} holds.
The next results describe these extensions explicitly.

\begin{lemma}\label{min1}
Let $\OO$ be an order in $K$ and let $\frakP$ be a maximal ideal of $\OO$.
Let $\alpha \in \OO$ such that $\alpha + \frakP$ is a primitive element of $(\OO/\frakP)^* \isom \F_{p^f}^*$.
For any prime divisor $r$ of $f$, define $d := f/r$. Then
\[ \Lambda := \langle \frakP,\, \gamma_d \rangle = \frakP + \sum_{j=0}^{d-1} \gamma_d^j\Z  \quad \mbox{where} \quad \gamma_d:= \alpha^{\frac{p^f-1}{p^{d}-1}} \]
is a maximal suborder of $\OO$ with $(\Lambda : \OO) = \frakP$ and any order with this property arises this way.
\end{lemma}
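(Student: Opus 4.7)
The plan is first to unpack the definition of $\Lambda$, then to verify the two claimed properties, and finally to handle the converse. I would begin by observing that since $\frakP$ is an ideal of $\OO$, the product $\frakP \cdot \Z[\gamma_d]$ is contained in $\frakP$, so $\langle \frakP, \gamma_d \rangle$ agrees with the additive group $\Z[\gamma_d] + \frakP$. Reducing modulo $\frakP$, the image $\gamma_d + \frakP = (\alpha+\frakP)^{(p^f-1)/(p^d-1)}$ has order $p^d - 1$ in $\F_{p^f}^*$, so it generates the unique subfield of $\F_{p^f}$ of size $p^d$. Hence its minimal polynomial over $\F_p$ has degree $d$, and lifting this polynomial to $\Z[X]$ shows by induction that $\gamma_d^j$ for $j \ge d$ lies in $\sum_{k=0}^{d-1} \gamma_d^k \Z + \frakP$. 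Together with $p \in \frakP$ this yields the stated equality $\Lambda = \frakP + \sum_{j=0}^{d-1} \gamma_d^j \Z$ and simultaneously shows $\Lambda/\frakP \isom \F_{p^d}$.

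Next I would verify $(\Lambda:\OO) = \frakP$. The inclusion $\frakP \subseteq (\Lambda:\OO)$ is immediate since $\frakP \OO = \frakP \subseteq \Lambda$. For the reverse, $(\Lambda:\OO)$ is an ideal of $\OO$ contained in $\Lambda$; its image in $\Lambda/\frakP \isom \F_{p^d}$ must map all of $\OO/\frakP \isom \F_{p^f}$ into $\F_{p^d}$ under multiplication, which forces that image to vanish. Because $f/d = r$ is prime, $\OO/\frakP$ is a field extension of prime degree of $\Lambda/\frakP$, i.e.\ a minimal algebra of type (1) in the classification following Lemma~\ref{MSOcolon}. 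Applying the corollary after that lemma, $\Lambda$ is a maximal suborder of $\OO$.

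For the converse, let $\Lambda' \subseteq \OO$ be any maximal suborder with $(\Lambda':\OO) = \frakP$. By Lemma~\ref{MSOcolon} the colon $\frakP$ is a maximal ideal of $\Lambda'$, so in particular $\frakP \subseteq \Lambda'$. Proposition~\ref{exttype} forces us into case (1), hence $\OO/\frakP \isom \F_{p^f}$ is a field extension of $\Lambda'/\frakP$ of some prime degree $r$. Setting $d := f/r$, uniqueness of subfields of $\F_{p^f}$ gives $\Lambda'/\frakP = \F_{p^d}$, and the defining property of $\gamma_d$ shows $\gamma_d + \frakP$ lies in that subfield. Therefore $\gamma_d \in \Lambda'$, so $\frakP + \Z[\gamma_d] \subseteq \Lambda'$; since both sides contain $\frakP$ and have the same image $\F_{p^d}$ in $\OO/\frakP$, equality holds and $\Lambda'$ has the asserted form.

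I do not expect a single decisive obstacle, but the point requiring most care is the passage between the abstractly given subfield $\Lambda'/\frakP$ and the concrete generator $\gamma_d$ built from the fixed primitive element $\alpha$. This works cleanly because $\F_{p^f}$ has a unique subfield of each index, so the element produced from $\alpha$ automatically lands in whichever maximal subfield $\Lambda'/\frakP$ happens to be; without this uniqueness one would need to adjust $\alpha$ for each choice of $\Lambda'$.
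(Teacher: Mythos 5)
Your proof is correct and follows essentially the same route as the paper's: verify $\Lambda$ is a subring with $\Lambda/\frakP \isom \F_{p^d}$, deduce $(\Lambda:\OO) = \frakP$, recognize the extension as type (1), and use uniqueness of subfields of $\F_{p^f}$ for the converse. You merely fill in details that the paper leaves implicit (the reduction of $\gamma_d^j$ modulo $\frakP$, and the explicit multiplication argument for $(\Lambda:\OO)\subseteq\frakP$, where the paper instead cites maximality of $\frakP$ in $\OO$).
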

\begin{proof}
We check that $\Lambda$ is a suborder of $\OO$ and thus $\frakP$ is a maximal ideal of $\Lambda$.
Moreover, $(\Lambda/\frakP)^* \isom \F^*_{p^d}$ is generated by $\gamma_d$, so $\OO \ne \Lambda$.
Hence $\frakP \subseteq (\Lambda : \OO) \subsetneq \OO$ and therefore $(\Lambda : \OO) = \frakP$.
Proposition~\ref{exttype} shows that $\Lambda / \OO$ is of type (1).
Every maximal subfield of $\OO/\frakP \isom \F_{p^f}$ is generated by $\gamma_{f/r}$ for some unique prime divisor $r$ of $f$.
Hence every maximal suborder $\Gamma \subset \OO$ with $(\OO : \Gamma) = \frakP$ such that $\OO/\Gamma$ is of type (1) is of the form $\langle \frakP, \gamma_{f/r} \rangle$ as claimed.
\end{proof}

\begin{lemma}\label{min2}
Let $\OO$ be an order in $K$ with two distinct maximal ideals $\frakP$ and $\frakQ$ over some prime $p$ such that $\OO/\frakP \isom \OO/\frakQ \isom \F_{p^f}$.
Pick $\alpha \in \frakQ \setminus \frakP$ and $\beta \in \frakP \setminus \frakQ$ such that $\alpha + \frakP$ and $\beta + \frakQ$ are primitive elements in $(\OO/\frakP)^*$ and $(\OO/\frakQ)^*$ with the same minimal polynomial over $\F_p$.
Set $\frakp:= \frakP \cap \frakQ$.
Then there are $f$ maximal suborders $\Lambda \subsetneq \OO$ with $(\Lambda : \OO) = \frakp$. For $0 \le i < f$ they are given by 
\[ \Lambda_i:= \langle \frakp,\, \gamma_i \rangle = \frakp + \sum_{j=0}^{f-1} \gamma_i^j \Z \quad \mbox{where} \quad \gamma_i:= \alpha + \beta^{p^i}.   \]
\end{lemma}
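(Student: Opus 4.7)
The plan is to pass to the quotient $\OO/\frakp \isom \F_{p^f} \times \F_{p^f}$ via the Chinese remainder theorem and parametrize the relevant subrings by elements of $\mathrm{Gal}(\F_{p^f}/\F_p)$. Since we are given two distinct maximal ideals over $p$, every maximal suborder $\Lambda \subsetneq \OO$ with $(\Lambda:\OO) = \frakp$ must fall under case (2) of Proposition~\ref{exttype}, so $\Lambda/\frakp$ is a field isomorphic to $\F_{p^f}$.

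From there, each of the two projections $\Lambda/\frakp \to \OO/\frakP$ and $\Lambda/\frakp \to \OO/\frakQ$ is an $\F_p$-algebra map between fields of equal order, hence an isomorphism. Consequently $\Lambda/\frakp$ embeds in $\F_{p^f}\times \F_{p^f}$ as the graph $\{(y,\sigma(y)) : y \in \F_{p^f}\}$ of a unique $\sigma \in \mathrm{Gal}(\F_{p^f}/\F_p)$, and conversely any such graph lifts to a subring $\Lambda$ of $\OO$. This $\Lambda$ is a maximal suborder with $(\Lambda:\OO) = \frakp$: since $\OO/\frakp$ has $\Lambda/\frakp$-dimension~$2$, the only intermediate ring is $\OO$; and since the graph contains no nonzero element of the form $(a,0)$ or $(0,b)$, the conductor in $\OO/\frakp$ is trivial. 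Hence there are exactly $|\mathrm{Gal}(\F_{p^f}/\F_p)| = f$ such suborders.

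It remains to identify the $\Lambda_i$ of the statement with these $f$ graphs. Under the CRT isomorphism, $\alpha \mapsto (\bar\alpha,0)$ and $\beta \mapsto (0,\bar\beta)$, so $\gamma_i \mapsto (\bar\alpha, \bar\beta^{p^i})$. Both coordinates are primitive elements of $\F_{p^f}^\times$ sharing the same minimal polynomial (since Frobenius preserves minimal polynomials and $\bar\alpha,\bar\beta$ were chosen to agree), so the subring of $\OO/\frakp$ generated by the image of $\gamma_i$ is the graph of the unique $\sigma_i \in \mathrm{Gal}(\F_{p^f}/\F_p)$ sending $\bar\alpha$ to $\bar\beta^{p^i}$. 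As $i$ runs over $0,\dots,f-1$, the Frobenius powers of $\bar\beta$ exhaust the Galois conjugates of $\bar\beta$, so $\sigma_0,\dots,\sigma_{f-1}$ enumerate the full Galois group and the $\Lambda_i$ exhaust the classification. Finally, the equality $\Lambda_i = \frakp + \sum_{j=0}^{f-1} \gamma_i^j \Z$ follows because the right-hand side contains $\frakp$ and its reduction modulo $\frakp$ is $\F_p[\bar\gamma_i] \isom \F_{p^f}$, which is closed under multiplication.

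The main obstacle is the classification step of the second paragraph: showing that the diagonal-graph property characterizes maximal suborders with conductor $\frakp$. Once Proposition~\ref{exttype}(2) is in hand this reduces to elementary linear algebra over the base field $\Lambda/\frakp$, but one has to carefully exclude degenerate subrings contained in a single factor, since these correspond to type~(1) extensions with strictly larger conductor and must not be counted.
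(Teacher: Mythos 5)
Your proof is correct and follows essentially the same route as the paper: reduce modulo $\frakp$, invoke the Chinese Remainder Theorem to identify $\OO/\frakp$ with $\F_{p^f}\oplus\F_{p^f}$, and classify the diagonal subfields. Where the paper simply cites \cite[Lemme~1.2]{French} for the fact that a maximal suborder of conductor $\frakp$ reduces to a diagonally embedded copy of $\F_{p^f}$, you make the parametrization explicit as graphs of the $f$ field isomorphisms $\OO/\frakP\to\OO/\frakQ$ (a torsor under $\mathrm{Gal}(\F_{p^f}/\F_p)$); this is a welcome expansion of a step the paper states rather tersely, and the identification of $\Lambda_i$ with the graph sending $\bar\alpha\mapsto\bar\beta^{p^i}$ is exactly what is needed to see that the $f$ choices of $i$ exhaust the classification.
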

\begin{proof}
Let $\Lambda \subseteq \OO$ be a maximal suborder such that $(\Lambda : \OO) = \frakp$.
By \cite[Lemme 1.2]{French} the field $\F_{q^f} \isom {\Lambda/\frakp}$ is diagonally embedded into $ \OO/\frakP \oplus \OO/\frakQ \isom \F_{q^f} \oplus \F_{q^f}$.
Hence there exists a unique integer $0 \le i < f$ such that $(\Lambda/\frakp)^*$ has $\gamma_i + \frakp$ as primitive element.
Thus $\Lambda = \Lambda_i$.\\
Conversely, $\Lambda_i$ is an order and $\Lambda_i / \frakp = \F_p( \gamma_i ) \isom \F_{p^f}$ shows that $\frakp$ is a maximal ideal of $\Lambda_i$.
By \cite[Lemme 1.2]{French} the ring extension $\F_{q^f} \isom \Lambda_i/\frakp \subseteq \OO/\frakp \isom \F_{q^f} \oplus \F_{q^f}$ is minimal.
Hence $\Lambda_i$ is a maximal suborder of $\OO$.
Since $\frakP$ and $\frakQ$ both lie over $\frakp$ we have $(\Lambda_i : \OO) = \frakp$ by Proposition~\ref{exttype}.
\end{proof}

The description of the maximal suborders of type (3) is based on the following result on finite rings.

\begin{prop}\label{semisimpleR}
Let $R$ be a finite commutative ring of prime characteristic $p$.
Let $e \ge 1$ such that $\JJ(R)^{p^e} = (0)$.
Then the image of $\varphi \colon R \to R, \; x \mapsto x^{p^e}$ is the largest semi-simple subring of $R$.
It satisfies $R = \varphi(R) \oplus \JJ(R)$ and thus $\varphi(R) \isom R / \JJ(R)$.
\end{prop}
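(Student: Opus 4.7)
The plan is to exploit that in characteristic $p$ the Frobenius is a ring homomorphism, identify the kernel of $\varphi$ with $\JJ(R)$, and then use that $\varphi$ induces an automorphism of the semisimple quotient $R/\JJ(R)$ to split off $\JJ(R)$.

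First, since $R$ has characteristic $p$, the map $x \mapsto x^p$ is a ring endomorphism, so $\varphi$ is a ring endomorphism and $\varphi(R)$ is indeed a subring. The key identification is $\ker \varphi = \JJ(R)$: one inclusion is immediate from $\JJ(R)^{p^e} = 0$, so $n^{p^e} \in \JJ(R)^{p^e}$ vanishes for $n\in \JJ(R)$; for the reverse, if $x^{p^e}=0$ then $x$ is nilpotent, and in the finite (hence Artinian) commutative ring $R$ the nilradical coincides with $\JJ(R)$. From $\ker \varphi = \JJ(R)$ we immediately get $\varphi(R) \isom R/\JJ(R)$, which is a finite commutative semisimple ring and hence a product of finite fields.

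Next I would show $R = \varphi(R) \oplus \JJ(R)$. Let $\pi \colon R \to R/\JJ(R)$ be the projection. Since Frobenius commutes with ring homomorphisms, $\pi \circ \varphi = \bar\varphi \circ \pi$, where $\bar\varphi$ is the $p^e$-th power map on $R/\JJ(R)$. On a product of finite fields of characteristic $p$ the map $\bar\varphi$ is bijective, so it is an automorphism. For the intersection: if $y = \varphi(x) \in \JJ(R)$, then $\bar\varphi(\pi(x)) = \pi(y) = 0$, whence $\pi(x) = 0$, i.e. $x \in \JJ(R) = \ker\varphi$, so $y = 0$. For the sum: given $r \in R$, pick $s \in R$ with $\bar\varphi(\pi(s)) = \pi(r)$; then $r - \varphi(s) \in \ker \pi = \JJ(R)$, so $r \in \varphi(R) + \JJ(R)$.

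Finally, for the maximality among semisimple subrings, let $S \subseteq R$ be any semisimple subring. As a finite commutative semisimple ring of characteristic $p$, $S$ is a product of finite fields $\F_{p^{m_i}}$. Setting $N := \lcm(m_i)$, every $s \in S$ satisfies $s^{p^N} = s$ componentwise. Choose $M \ge e$ that is a multiple of $N$; then $s = s^{p^M} = \varphi(s^{p^{M-e}}) \in \varphi(R)$, so $S \subseteq \varphi(R)$. Combined with the fact that $\varphi(R) \isom R/\JJ(R)$ is itself semisimple, this shows $\varphi(R)$ is the largest semisimple subring of $R$.

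The only mildly delicate step is the identification of $\ker\varphi$ with $\JJ(R)$ together with the observation that $\bar\varphi$ is an automorphism of $R/\JJ(R)$; once these are in hand, both the decomposition and the maximality statement fall out cleanly.
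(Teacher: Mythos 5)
Your proof is correct and follows the same overall strategy as the paper: identify $\ker\varphi$ with $\JJ(R)$ (nilradical $=$ Jacobson radical in a finite commutative ring), observe that Frobenius is bijective on finite fields, and conclude both the maximality and the decomposition. The only difference is in how the splitting $R = \varphi(R) \oplus \JJ(R)$ is obtained. The paper notes $\varphi(R)\cap\JJ(R)$ is a nilpotent ideal of the semi-simple ring $\varphi(R)$, hence zero, and then gets surjectivity of $\varphi(R) + \JJ(R) \to R$ by counting cardinalities ($|R| = |\varphi(R)|\cdot|\ker\varphi| = |\varphi(R)|\cdot|\JJ(R)|$). You instead make the surjection explicit by lifting through the induced automorphism $\bar\varphi$ on $R/\JJ(R)$, which is a cleaner and slightly more constructive package of the same fact; the paper's counting argument is shorter but less transparent. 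Either way the essential content --- $\ker\varphi = \JJ(R)$ and the bijectivity of Frobenius on the semisimple quotient --- is identical.
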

\begin{proof}
Since $R$ is Artinian, the kernel of $\varphi$ is contained in $\JJ(R)$.
By assumption on $e$, we have $\ker(\varphi) = \JJ(R)$ and thus $\varphi(R) \isom R/\JJ(R)$ is semi-simple.
The map $\varphi$ induces an automorphism on any subfield of $R$.
Thus the image $\varphi(R)$ contains any subfield and therefore any semi-simple subring of $R$.
From $\JJ(R) \cap \varphi(R) = \JJ(\varphi(R)) = (0)$ and counting cardinalities, we get $R = \varphi(R) \oplus \JJ(R)$.
\end{proof}

\begin{lemma}\label{min3}
Let $\OO$ be an order in $K$ with maximal ideal $\frakP$.
Let $\alpha \in \OO$ such that $\alpha + \frakP$ is a primitive element of $(\OO/\frakP)^* \isom \F_{p^f}^*$.
Consider a maximal subspace $U$ of the $\OO/\frakP$-space $V:= \frakP / (\frakP^2 + p \OO)$.
Let $L$ be the preimage of $U$ under the canonical epimorphism $\frakP \to \frakP / (\frakP^2+ p\OO)$.
Then
\[ \Lambda := \langle L,\, \alpha^p \rangle = L + \sum_{i=0}^{f-1} \alpha^{pi} \Z \]
is a maximal suborder of $\OO$ with maximal ideal $L = (\Lambda: \OO) \subseteq \frakP$ and $\OO/\Lambda$ is of type (3). Moreover, all such orders arise this way.
\end{lemma}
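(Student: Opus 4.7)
The plan is to apply Proposition~\ref{semisimpleR} to the finite ring $R := \OO/L$ and recognize $\Lambda/L$ as the canonical Frobenius-image inside $R$.

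First I would observe that $L$ is an $\OO$-ideal, because $\frakP$ annihilates $V$, giving $V$ an $\OO/\frakP$-module structure whose submodules pull back to $\OO$-submodules of $\frakP$. Since $L \supseteq \frakP^2$, any maximal ideal of $\OO$ containing $L$ must contain $\frakP$ (as $\frakP^2 \subseteq \frakQ$ prime forces $\frakP \subseteq \frakQ$), hence equals $\frakP$; so $R$ is local with maximal ideal $\frakm := \frakP/L$, and $\frakm^2 = 0$ because $\frakP^2 \subseteq L$. Proposition~\ref{semisimpleR} with $e = 1$ now yields $R = \varphi(R) \oplus \frakm$, where $\varphi(x) = x^p$ and $\varphi(R) \cong R/\frakm \cong \OO/\frakP \cong \F_{p^f}$ is the unique semisimple subring; the image $\varphi(\bar\alpha) = \bar{\alpha^p}$ projects to the primitive element $\alpha^p + \frakP$, so $\{1, \bar{\alpha^p}, \dots, \bar{\alpha^{(f-1)p}}\}$ forms an $\F_p$-basis of $\varphi(R)$.

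Next I would combine this with the elementary identity $\langle L, \alpha^p\rangle = L + \Z[\alpha^p]$ (valid since $L$ is an $\OO$-ideal, so $L\cdot\Z[\alpha^p] \subseteq L$). The claim $\Lambda = L + \sum_{i=0}^{f-1}\alpha^{pi}\Z$ reduces inductively to exhibiting $\alpha^{pf}$ in the right-hand side; but modulo $L$ that element lies in $\varphi(R) = \F_p[\bar{\alpha^p}]$, so lifting the $\F_p$-expression in the above basis back to $\Z$ (and using $p \in L$) furnishes the reduction. Then $\Lambda/L = \varphi(R) \cong \F_{p^f}$ is a field, making $L$ a maximal ideal of $\Lambda$; for $L = (\Lambda:\OO)$, the inclusion $\supseteq$ is immediate, while any $\OO$-ideal inside $\Lambda$ projects to an ideal of the local ring $R$ sitting inside $\varphi(R)$, hence to $\varphi(R) \cap \frakm = 0$. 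Finally $\OO/L$ is the minimal algebra $\F_{p^f}[X]/(X^2)$ of the third kind over $\Lambda/L$, so by the Corollary following Lemma~\ref{MSOcolon} together with Proposition~\ref{exttype} the suborder $\Lambda$ is maximal of type (3).

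For the converse, let $\Gamma$ be any maximal suborder of $\OO$ of type (3) with associated maximal ideal $\frakP$ of $\OO$ and conductor $\frakp := (\Gamma:\OO)$. Proposition~\ref{exttype} gives $\frakP^2 + p\OO \subseteq \frakp \subsetneq \frakP$, and since $\frakP/\frakp$ is one-dimensional over $\F_{p^f}$ inside the minimal algebra $\OO/\frakp$, the $\OO/\frakP$-subspace $\frakp/(\frakP^2 + p\OO)$ of $V$ has codimension one, i.e.\ $\frakp$ is of the form $L$ for some maximal $U$. By the uniqueness of the semisimple complement to $\frakm$ in $R = \OO/L$, the subring $\Gamma/L \cong \F_{p^f}$ must coincide with $\varphi(R) = \F_p[\bar{\alpha^p}]$, and lifting yields $\Gamma = L + \sum_{i=0}^{f-1}\alpha^{pi}\Z$. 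The delicate point throughout is passing from the residue-field relation between the powers of $\bar\alpha^p$ modulo $\frakP$ to the corresponding relation modulo the smaller ideal $L$; Proposition~\ref{semisimpleR} is precisely what makes this possible.
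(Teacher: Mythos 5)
Your proof is correct and follows essentially the same strategy as the paper: the key tool in both is Proposition~\ref{semisimpleR}, used to identify the Frobenius image as the unique semisimple complement of the nilpotent maximal ideal. The only cosmetic difference is that the paper applies Proposition~\ref{semisimpleR} once to $R:=\OO/(\frakP^2+p\OO)$ and reads off all maximal suborders simultaneously as $\varphi(R)\oplus U$, whereas you apply it separately to $\OO/L$ for each choice of $U$; the underlying argument and the way the semisimple lift pins down $\Lambda/L$ are the same.
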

\begin{proof}
By Proposition~\ref{exttype}, any maximal suborder $\Gamma$ of $\OO$ such that $\OO/\Gamma$ is of type (3) contains $I = \frakP^2 + p\OO$.
So it suffices to describe the maximal subrings $S$ of $R := \OO/I$ such that $R/S$ is of type (3).\\
By Proposition~\ref{semisimpleR}, the image of $\varphi \colon R \to R,\; x \mapsto x^p$ satisfies $R = \varphi(R) \oplus \frakP/I$.
Now $U \subseteq \frakp/I$ is an ideal of $R$. So $S:= \varphi(R) \oplus \frakP/I$ is a subring of $R$ with $\frakp/I \cap S = U$.
Let $x \in \frakP/I \setminus U$. Then $x^2 = 0$ and thus $R/U = (\varphi(R) \oplus \varphi(R) x \oplus U)/U \isom \varphi(R)[X]/(X^2)$.
So $R/S$ is of type (3). Thus $\Lambda$, which is the preimage of $S$ under $\OO \to \OO/I$, is a maximal suborder of $\OO$ and $\OO/\Lambda \isom R/S$ is of type (3).\\
Conversely suppose that $S$ is any maximal subring of $R$ such that $R/S$ is of type (3).
Then $U = \frakP/I \cap S$ is the maximal ideal of $S$ and by assumption $\varphi(R) \isom R/(\frakP/I) \isom S/U$.
Proposition \ref{semisimpleR} shows that $S$ contains a unique subring isomorphic to $\varphi(R)$.
But then since $\varphi(R)$ is the only such ring in $R$, we have $\varphi(R) \subseteq S$.
Hence $S = \varphi(R) \oplus U'$ with some maximal subspace $U' \subsetneq \frakP/I$. 
\end{proof}

\begin{corollary}\label{exists3}
Let $\frakP$ be a prime ideal of $\OO$ over $p$. Then the following are equivalent:
\begin{enumerate}
\item
There exists no maximal suborder $\Lambda$ of $\OO$ with $\frakP = (\Lambda : \OO)$ such that $\OO/\Lambda$ is of type (3).
\item
$\frakP = \frakP^2 + p\OO$.
\item The completion $\OO_\frakP$ of $\OO$ at $\frakP$ is maximal and $\frakP$ is unramified.
\end{enumerate}
\end{corollary}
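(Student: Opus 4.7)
The plan is to handle the two equivalences separately: (1) $\iff$ (2) follows directly from Lemma~\ref{min3}, while (2) $\iff$ (3) is a Nakayama argument after localising at $\frakP$.

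For (1) $\iff$ (2), Lemma~\ref{min3} parameterises the maximal suborders $\Lambda \subsetneq \OO$ of type (3) associated with $\frakP$ (i.e.\ those for which $\frakP$ is the unique maximal ideal of $\OO$ containing $(\Lambda : \OO)$) by the maximal proper $\OO/\frakP$-subspaces of $V := \frakP/(\frakP^2 + p\OO)$. A finite-dimensional vector space admits such a hyperplane iff it is non-zero, so no such $\Lambda$ exists iff $V = 0$, which is exactly the equation $\frakP = \frakP^2 + p\OO$.

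For (2) $\iff$ (3), I would pass to the localisation $R := \OO_\frakP$, a one-dimensional Noetherian local domain with maximal ideal $\mathfrak{m} = \frakP R$. Since the $\OO$-module $\frakP/(\frakP^2 + p\OO)$ is annihilated by both $p$ and $\frakP$, its support is contained in $\{\frakP\}$, so (2) holds globally iff its localisation $\mathfrak{m} = \mathfrak{m}^2 + pR$ does. Nakayama's lemma applied to the finitely generated $R$-module $\mathfrak{m}/pR$ then turns the latter into $\mathfrak{m} = pR$, i.e.\ $\mathfrak{m}$ is principal with generator $p$. For a one-dimensional Noetherian local domain, having a principal maximal ideal is equivalent to being a DVR---equivalently, to $\OO$ being maximal at $\frakP$---and the fact that $p$ itself is a uniformiser says precisely that $v_\frakP(p) = 1$, i.e.\ $\frakP$ is unramified. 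Hence $\mathfrak{m} = pR$ matches (3) exactly, closing the chain of equivalences. The only slightly delicate step is the support argument linking the global and local versions of (2); everything else is standard commutative algebra.
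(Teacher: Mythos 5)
Your proof is correct and follows essentially the same route as the paper's.

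For (1) $\iff$ (2) you invoke Lemma~\ref{min3} exactly as the paper does: the type-(3) maximal suborders attached to $\frakP$ are parameterised by proper maximal $\OO/\frakP$-subspaces of $\frakP/(\frakP^2 + p\OO)$, and such a subspace exists iff this space is non-zero. For (2) $\iff$ (3), both you and the paper reduce to a local situation, apply Nakayama to deduce that (2) forces $\frakP$ to become the principal ideal $(p)$ locally, and then use the fact that a one-dimensional Noetherian local domain with principal (equivalently, invertible) maximal ideal is a DVR, i.e.\ $\OO$ is already maximal at $\frakP$ with $p$ a uniformiser. The only cosmetic differences are that you phrase the reduction through the localisation $\OO_{(\frakP)}$ and a support argument, whereas the paper works directly with the $\frakP$-adic completion (both are faithfully flat over the local ring, so the two reductions are interchangeable here), and you spell out the DVR characterisation where the paper appeals to ``$\frakP\OO_\frakP$ not invertible'' --- the same fact stated contrapositively.
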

\begin{proof}
Lemma \ref{min3} shows the equivalence of the first two assertions and (3) certainly implies (2).
Suppose (2) holds and let $M = \frakP \OO_\frakP /p\OO_\frakP$. Then $M^2 = M$ and thus $M=0$ by Nakayama's lemma. 
Hence $\frakP \OO_\frakP= p\OO_\frakP$.
If $\OO_\frakP$ is not maximal, this is impossible since $\frakP \OO_\frakP$ is not invertible.
So $\OO_\frakP$ is maximal and $p\OO_\frakP = \frakP \OO_\frakP$ then simply states that $\frakP$ is unramified.
\end{proof}

Lemmata \ref{min1}, \ref{min2} and \ref{min3} immediately yield the following result.
\begin{corollary}\label{cor:JLambda}
Let $\Lambda \subseteq \OO$ be a maximal suborder of $p$-power index.
If $\OO/\Lambda$ is of type (1) or (2), then $\JJ_p(\OO) \subseteq \Lambda$ otherwise $\JJ_p(\OO)^2 + p\OO \subseteq \Lambda$.
\end{corollary}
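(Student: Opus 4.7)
The plan is to handle the three types separately, using the explicit descriptions of $\Lambda$ furnished by Lemmata~\ref{min1}, \ref{min2}, \ref{min3}. In each case the key observation is that the maximal ideals of $\OO$ over $p$ that appear in those descriptions automatically contain $\JJ_p(\OO)$, since by definition $\JJ_p(\OO)$ is the intersection of \emph{all} maximal ideals of $\OO$ over $p$.

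For types (1) and (2) I would proceed as follows. In type (1), Lemma~\ref{min1} writes $\Lambda = \frakP + \sum_{j=0}^{d-1}\gamma_d^j \Z$, so $\frakP \subseteq \Lambda$; since $\frakP$ is a maximal ideal of $\OO$ over $p$, we have $\JJ_p(\OO) \subseteq \frakP \subseteq \Lambda$. In type (2), Lemma~\ref{min2} gives $\Lambda \supseteq \frakp = \frakP \cap \frakQ$, and again $\JJ_p(\OO) \subseteq \frakP \cap \frakQ \subseteq \Lambda$. (Alternatively, one may just invoke the final sentence of Proposition~\ref{exttype} that asserts $\JJ_p(\Lambda) = \JJ_p(\OO)$, together with the trivial $\JJ_p(\Lambda) \subseteq \Lambda$.)

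For type (3), I would read off from Lemma~\ref{min3} that $\Lambda$ contains the preimage $L$ of $U$, and in particular $\Lambda \supseteq \frakP^2 + p\OO$. The unique maximal ideal $\frakP$ of $\OO$ over $\frakp$ in this case lies over $p$, so $\JJ_p(\OO) \subseteq \frakP$, whence $\JJ_p(\OO)^2 \subseteq \frakP^2$. Combined with $p\OO \subseteq \frakp \subseteq \Lambda$, this gives $\JJ_p(\OO)^2 + p\OO \subseteq \frakP^2 + p\OO \subseteq \Lambda$, as desired.

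There is no real obstacle here; the corollary is a direct bookkeeping consequence of the structural lemmas that precede it. The only thing that needs a moment of care is noting that $\JJ_p(\OO)$ (the intersection over \emph{all} primes of $\OO$ above $p$) is contained in any single such prime, so that the explicit inclusions produced by the lemmas for the specific primes $\frakP$ (and $\frakQ$) are enough to get a statement about $\JJ_p(\OO)$ itself.
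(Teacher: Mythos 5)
Your proof is correct and matches the paper's intent: the paper offers no separate proof beyond the remark that Lemmata~\ref{min1}, \ref{min2} and \ref{min3} ``immediately yield'' the corollary, and your case-by-case reading of those lemmas is exactly the bookkeeping that the remark defers. The key step you rightly flag --- that $\JJ_p(\OO)$, as the intersection over all primes of $\OO$ above $p$, sits inside the single prime $\frakP$ (and hence $\JJ_p(\OO)^2 \subseteq \frakP^2$) singled out in each lemma --- is precisely what makes the deduction go through.
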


The following result will not be used later on, but might be of independent interest.

\begin{prop}
Let $p$ be a prime.
Suppose $\Lambda \subseteq \OO$ are orders such that $[\OO : \Lambda]$ is a power of $p$.
Let $\frakp_1,\dots,\frakp_r$ be the prime ideals of $\OO$ over $p$.
For $1 \le i \le r$ let $a_i \in \prod_{j \ne i} \frakp_j$ such that $a_i + \frakp_i \in (\OO/\frakp_i)^* \isom \F_{p^{f_i}}^*$ is a primitive element.
Let $\Gamma_1$ be the order generated by $\JJ_p(\OO)$ and 
\[ \{ a_i^{\frac{p^{f_i-1}}{p^{d_i-1}}} \mid 1 \le i \le r \} \]
where $\Lambda/(\frakp_i \cap \Lambda) \isom \F_{p^{d_i}}$ and let $\Gamma_2 = \Lambda  + \JJ_p(\OO)$.
Then the following hold.
\begin{enumerate}
\item Any chain of maximal suborders between $\Gamma_1$ and $\OO$ consists of extensions of type (1).
\item Any chain of maximal suborders between $\Gamma_2$ and $\Gamma_1$ consists of extensions of type (2).
\item Any chain of maximal suborders between $\Lambda$ and $\Gamma_2$ consists of extensions of type (3).
\end{enumerate}
\end{prop}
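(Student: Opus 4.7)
The plan is to work everywhere modulo the $p$-radical and to exploit the Chinese Remainder decomposition
\[ \OO/\JJ_p(\OO) \isom \prod_{i=1}^r \F_{p^{f_i}}. \]
The strategy is to identify the images of $\Gamma_1$, $\Gamma_2$, and $\Lambda$ inside this product explicitly, and then classify the maximal ring extensions along each of the three segments of the chain combinatorially.

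I would begin by showing $\Gamma_1 / \JJ_p(\OO) = \prod_{i=1}^r \F_{p^{d_i}}$. Since $a_i \in \prod_{j \ne i}\frakp_j$, its image in the product decomposition vanishes outside the $i$-th factor and is a primitive element of $\F_{p^{f_i}}^*$ there. Hence the image of $b_i := a_i^{(p^{f_i}-1)/(p^{d_i}-1)}$ is a primitive element of the subfield $\F_{p^{d_i}} \subseteq \F_{p^{f_i}}$ in the $i$-th factor and zero elsewhere, so $b_i^{p^{d_i}-1}$ reduces to the standard idempotent $e_i$. Consequently $\Gamma_1/\JJ_p(\OO)$ contains every $e_i$ and equals $\prod_i \F_{p^{d_i}}$. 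By Lemma~\ref{JJmeet}, $\Lambda \cap \JJ_p(\OO) = \JJ_p(\Lambda)$, hence $\Gamma_2/\JJ_p(\OO) \isom \Lambda/\JJ_p(\Lambda)$ sits as a block-diagonal copy inside $\prod_i \F_{p^{d_i}}$, indexed by the partition $i \mapsto \frakp_i \cap \Lambda$, whose blocks have constant $d_i$ by the very definition of $d_i$. In particular, $\Lambda \subseteq \Gamma_2 \subseteq \Gamma_1 \subseteq \OO$.

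For~(1), every intermediate $\OO'$ contains $\JJ_p(\OO)$, so Lemma~\ref{JJmeet} gives $\JJ_p(\OO')=\JJ_p(\OO)$, and $\OO'/\JJ_p(\OO)$ contains all idempotents $e_i$; it therefore factors as $\prod_i K_i$ with $\F_{p^{d_i}} \subseteq K_i \subseteq \F_{p^{f_i}}$. A maximal suborder step replaces exactly one $K_{i_0}$ by a maximal proper subfield, matching type~(1) of Proposition~\ref{exttype}. For~(2), the same reduction places every intermediate $\OO'/\JJ_p(\OO)$ as a subring of $\prod_i \F_{p^{d_i}}$ containing the block-diagonal $\Gamma_2/\JJ_p(\OO)$; such subrings are themselves iterated block-diagonals corresponding to refinements of the starting partition, and a maximal suborder step refines it by splitting a single block $B$ into $B_1 \sqcup B_2$. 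Translating back, one prime of the larger order splits into two primes of the smaller with identical residue fields, which is exactly type~(2). For~(3), the inclusions $\Lambda \subseteq \OO'' \subseteq \Gamma_2 = \Lambda + \JJ_p(\OO)$ force $\OO'' + \JJ_p(\OO) = \Gamma_2$, and Lemma~\ref{JJmeet} yields the canonical isomorphism $\OO''/\JJ_p(\OO'') \isom \Lambda/\JJ_p(\Lambda)$. Every order in the chain therefore has the same residue fields at the same primes over $p$; Proposition~\ref{exttype} then rules out types~(1) and (2), leaving only type~(3).

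The main obstacle is the structure theorem used in~(2): subrings of a product of fields $\prod_i F_i$ that contain a prescribed diagonal copy of a subfield along each block are precisely the block-diagonal subrings corresponding to partitions of the index set, and maximal proper ones correspond to splitting exactly one block into two. Once this combinatorial fact is in hand, each of the three claims becomes a bookkeeping exercise: read off the number of primes over $p$ and the residue fields from the partition data and match against Proposition~\ref{exttype}.
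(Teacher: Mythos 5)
Your proof is essentially correct, and its core move---pass to $\OO/\JJ_p(\OO) \isom \prod_{i=1}^r \F_{p^{f_i}}$, identify $\Gamma_1/\JJ_p(\OO) = \prod_i \F_{p^{d_i}}$ (so all idempotents are present) and $\Gamma_2/\JJ_p(\OO)$ as a block-diagonal copy of $\Lambda/\JJ_p(\Lambda)$---is exactly the setup underlying the paper's argument. You are also right that the exponent in the statement should read $(p^{f_i}-1)/(p^{d_i}-1)$; you silently corrected what is clearly a typo.

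Where you differ from the paper is in the middle segment. The paper argues by \emph{elimination}: between $\Gamma_2$ and $\OO$ the $p$-radical is constant, so only types (1) and (2) can occur; the number of primes over $p$ is $r$ at both $\Gamma_1$ and $\OO$, so no type-(2) steps occur in that stretch; the residue fields of $\Gamma_2$ and $\Gamma_1$ at corresponding primes coincide (both equal to $\Lambda/\frakq_i$), so no type-(1) steps occur between them; and for the bottom segment the isomorphism $\OO''/\JJ_p(\OO'')\isom\Lambda/\JJ_p(\Lambda)$ rules out types (1) and (2). You instead classify \emph{all} intermediate subrings explicitly as block-diagonals via (in effect) Proposition~\ref{subrings} together with the observation that the embedding data is forced once you insist on containing the prescribed diagonal. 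Your route is more constructive and would also tell you exactly how many orders lie in each segment, at the cost of invoking more structure; the paper's is shorter because it never needs to parametrize the intermediate orders.

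One thing to fix: in part (2) you repeatedly state the direction of the combinatorics backwards. Passing to a maximal \emph{sub}order coarsens the partition by \emph{merging} two blocks into one; correspondingly one prime of the \emph{smaller} order lies beneath two primes of the \emph{larger} order with identical residue fields. As written (``a maximal suborder step refines it by splitting a single block''; ``one prime of the larger order splits into two primes of the smaller'') the sentences describe the opposite inclusion. The conclusion ``this is type (2)'' is nevertheless correct once the direction is straightened out, so this is a slip in exposition rather than a gap, but it should be corrected before the argument can be read cleanly against Proposition~\ref{exttype}.
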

\begin{proof}
From $\JJ_p(\Gamma_2) = \JJ_p(\OO)$ we see that any chain of maximal suborders between $\Gamma_2$ and $\OO$ only consists of extensions of type (1) and (2).
By construction, the order $\Gamma_1$ has $r$ different prime ideals over $p$, which implies (1).\\
Let $\frakq_1,\dots,\frakq_s$ be the prime ideals of $\Lambda$ over $p$.
The isomorphism theorem shows $\Lambda / \JJ_p(\Lambda) \isom \Gamma_2 / \JJ_p(\Lambda) \cap \Gamma_2 = \Gamma_2 / \JJ_p(\Gamma_2)$.
Hence for $1 \le i \le s$ there exists a unique prime ideal $\frakQ_i$ of $\Gamma_2$ over $\frakq_i$ and $\Gamma_2/ \frakQ_i \isom \Lambda/\frakq_i$.
Thus any chain of maximal suborders between $\Lambda$ and $\Gamma_2$ can only consist of extensions of type (3).\\
Any prime ideal of $\Gamma_1$ over $\frakq_i$ has residue field $\Lambda / \frakq_i$.
Thus in a chain of maximal suborders between $\Gamma_2$ and $\Gamma_1$ there cannot be an extensions of type (1).
\end{proof}

\section{Orders with given radical}\label{sec:rad}

We will describe all lattices in $K$ that are $p$-radicals as well as the corresponding orders.
The description is based on two observations on finite rings, Propositions~\ref{semisimpleR} and \ref{subrings}.

\begin{theorem}\label{CharpRad}
Let $p$ be a prime.
A full lattice $I$ in $K$ is the $p$-radical of some order if and only if
\[ p\Z + I^2 \subseteq I \subseteq \JJ_p(\Z_K) \:. \]
Suppose these inclusions hold.
Let $\OO_I$ be the suborder of $M(I)$ containing $I$ such that $\OO_I / I$ is the largest semi-simple subring of $M(I)/I$, see Proposition~\ref{semisimpleR}.
\begin{enumerate}
\item The orders $\Z+I$ and $\OO_I$ both have $p$-radical $I$ and $M(I) / \JJ_p(M(I)) \isom \OO_I/I$.
\item An order $\OO$ in $K$ has $p$-radical $I$ if and only if $\Z+I \subseteq \OO \subseteq \OO_I$.
\end{enumerate}
In particular, the orders in $K$ with $p$-radical $I$ form a lattice (in the set theoretic sense) with respect to inclusion.
\end{theorem}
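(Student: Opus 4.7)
The plan is to prove both directions through the finite ring $R := M(I)/I$ and its decomposition from Proposition~\ref{semisimpleR}. For the forward direction, if $I = \JJ_p(\OO)$, then $p \in I$ and $I^2 \subseteq I$ since $I$ is an ideal of $\OO$ over $p$, while Lemma~\ref{JJmeet} applied to $\OO \subseteq \Z_K$ yields $I = \JJ_p(\Z_K) \cap \OO \subseteq \JJ_p(\Z_K)$. Conversely, the assumption $p\Z + I^2 \subseteq I$ gives $I \subseteq M(I)$ and $pM(I) \subseteq I$, so $R$ is a finite commutative ring of characteristic $p$. Proposition~\ref{semisimpleR} then furnishes a decomposition $R = \varphi(R) \oplus \JJ(R)$ with $\varphi(R)$ the largest semi-simple subring, and by construction $\OO_I$ is the preimage of $\varphi(R)$ in $M(I)$.

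The key intermediate step is to show $\JJ_p(M(I))/I = \JJ(R)$. First I would establish $I \subseteq \JJ_p(M(I))$: given any prime $\frakq$ of $M(I)$ over $p$, going-up for the integral extension $M(I) \subseteq \Z_K$ produces a prime $\frakP$ of $\Z_K$ with $\frakP \cap M(I) = \frakq$, and $I \subseteq \JJ_p(\Z_K) \subseteq \frakP$ together with $I \subseteq M(I)$ force $I \subseteq \frakq$. This inclusion, combined with $p \in I$, identifies the primes of $M(I)$ over $p$ with the primes of $M(I)$ containing $I$, so $\JJ_p(M(I))/I$ coincides with $\JJ(R)$ by definition of the Jacobson radical. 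This yields the isomorphism $M(I)/\JJ_p(M(I)) \isom R/\JJ(R) \isom \varphi(R) = \OO_I/I$ claimed in (1).

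For (2), I would use the bijection between orders $I \subseteq \OO \subseteq M(I)$ and subrings of $R$. If $\JJ_p(\OO) = I$, then $\OO \subseteq M(I)$ (as $\OO$ stabilizes $I$) and $\OO/I$ is semi-simple, hence contained in the largest semi-simple subring $\varphi(R) = \OO_I/I$, giving $\OO \subseteq \OO_I$. Conversely, if $\Z + I \subseteq \OO \subseteq \OO_I$, then Lemma~\ref{JJmeet} applied to $\OO \subseteq M(I)$ gives $\JJ_p(\OO) = \JJ_p(M(I)) \cap \OO$; projecting to $R$, the image lies in $\JJ(R) \cap (\OO/I) \subseteq \JJ(R) \cap \varphi(R) = 0$, so $\JJ_p(\OO) \subseteq I$, and the reverse inclusion is immediate from the already-established $I \subseteq \JJ_p(M(I)) \cap \OO$. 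That $\Z + I$ is itself an order with $p$-radical $I$ is straightforward from $I^2 \subseteq I$ (closure under multiplication) together with $(\Z + I)/I \isom \F_p$. The concluding lattice assertion then follows from the bijection with subrings of $\varphi(R)$, which are closed under intersection and ring-theoretic sum.

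The main technical obstacle is the going-up step establishing $I \subseteq \JJ_p(M(I))$: it is what bridges the abstract decomposition of the finite ring $R$ coming from Proposition~\ref{semisimpleR} with the arithmetic of $\Z_K$, and without it the isomorphism between $\OO_I/I$ and $M(I)/\JJ_p(M(I))$ in (1) is not accessible.
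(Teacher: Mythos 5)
Your proof is correct and follows essentially the same approach as the paper: both pass through the finite ring $R = M(I)/I$, apply Proposition~\ref{semisimpleR} to obtain the decomposition $R = \varphi(R) \oplus \JJ(R)$, and then set up the correspondence between orders with $p$-radical $I$ and subrings of $\varphi(R)$. The one step you flag as the ``main technical obstacle'' --- establishing $I \subseteq \JJ_p(M(I))$ via going-up --- is in fact immediate from Lemma~\ref{JJmeet} applied to $M(I) \subseteq \Z_K$, since $\JJ_p(M(I)) = \JJ_p(\Z_K) \cap M(I) \supseteq I$; the paper uses exactly this shortcut, and since you already invoke Lemma~\ref{JJmeet} elsewhere in your argument, you could have spared yourself the going-up detour here too.
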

\begin{proof}
The given inclusions are certainly necessary for $I$ to be a $p$-radical.
Suppose now $p\Z + I^2 \subseteq I \subseteq \JJ_p(\Z_K)$.
Then $\OO := \Z+I$ is an order and $p\in I$ shows $[\OO:I] = p$.
Hence $I$ is a maximal ideal of $\OO$ and therefore $\JJ_p(\OO) \subseteq I$.
The reverse inclusion follows from $I \subseteq \JJ_p(\Z_K) \cap \OO = \JJ_p(\OO)$.
So $I = \JJ_p(\Z+I)$.

(1) Since $I$ is an ideal in $M:= M(I)$, it is also one in $\OO_I$.
Clearly, $I \subseteq \OO_I \cap \JJ_p(M) = \JJ_p(\OO_I)$. Conversely, $\OO_I/I$ is semisimple, so $\JJ_p(\OO_I) \subseteq I$.
Hence $\OO_I$ has $p$-radical $I$. The isomorphism $M / \JJ_p(M) \isom \OO_I/I$ has already been established in Proposition~\ref{semisimpleR}.

(2) By Lemma~\ref{JJmeet} any order between $\Z+I$ and $\OO_I$ has $p$-radical $I$.
Conversely, suppose that the order $\OO$ has $p$-radical $I$.
Clearly $\Z+I \subseteq \OO \subseteq M$ and $\OO/\JJ_p(\OO) = \OO/I \subseteq M/I$ is semisimple, which shows $\OO \subseteq \OO_I$.
\end{proof}

To enumerate all orders with $p$-radical $I$, we need to understand the orders between $\Z+I$ and $\OO_I$ or equivalently, the subrings of $R:= \OO_I/I$.

\begin{prop}\label{subrings}
Let $p$ be a prime and $R = R_1 \oplus \ldots \oplus R_r$ be a direct sum of fields $R_i \isom \F_{p^{f_i}}$.
For $1 \le j \le r$ let $\frakP_j = \sum_{k \ne j} R_k$ be the kernel of the natural epimorphism $R \to R_j$.
Let $f = \lcm(f_1,\dots,f_r)$ and fix embeddings $\varphi_i \colon R_i \to \F_{p^f}$. Let $w \in \F_{p^f}^*$ be a fixed primitive element.
Further let $S$ be a subring of $R$ with prime ideals $\frakp_1,\dots,\frakp_s$ of residue class degrees $d_1,\dots,d_s$ respectively.
\begin{enumerate}
\item
For $1 \le i \le s$ set $P_i = \{ 1 \le j \le r \mid \frakp_i \subseteq \frakP_j \}$.
Then $\{ P_1, \ldots, P_s \}$ is a partition of $\{1,\ldots,r\}$.
\item For $j \in P_i$ set $a_{j} = \varphi_j^{-1}( w^{\frac{p^f-1}{p^{d_i}-1}})$.
Then there exist a unique integer $e_j$ with $0 \le e_j < d_i$ and $e_j = 0$ if $j = \min(P_i)$ such that $S$ is generated by
\begin{equation}\label{eq:gens}
\{ \sum_{j \in P_i} a_{j}^{p^{e_{j}}} \mid 1 \le i \le s \} \:.
\end{equation}
\item  Given a partition $\{P_1,\dots,P_s\}$ of $\{1,\dots,r\}$ and positive integers $d_1,\dots,d_s$ with $d_i \mid \gcd\{ f_j \mid f_j \in P_i \}$, 
any set as in \eqref{eq:gens} generates a subring of $R$ isomorphic to $\bigoplus_{i=1}^s \F_{p^{e_i}}$.
\end{enumerate}
\end{prop}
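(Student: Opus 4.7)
The plan is to diagonalize $S$ through its primitive idempotents, match these idempotents with the partition $\{P_i\}$, and then use Frobenius conjugation on each field summand to normalize the generators to the unique form in \eqref{eq:gens}.

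\emph{Part (1).} The prime ideals of the semisimple ring $R$ are exactly $\frakP_1,\ldots,\frakP_r$. Since $S$ is finite, every prime of $S$ is maximal, so the contraction $\frakP_j \cap S$ is some $\frakp_i$; lying-over for the integral extension $S \subseteq R$ shows every $\frakp_i$ arises this way. The equivalence $\frakP_j \cap S = \frakp_i \Longleftrightarrow \frakp_i \subseteq \frakP_j$ (the nontrivial direction using maximality of $\frakp_i$) identifies $P_i$ with the fibre over $i$ of the surjection $j \mapsto i$, and fibres of a surjection partition the source.

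\emph{Part (2).} As a reduced Artinian ring, $S$ is semisimple, so CRT gives $S = \bigoplus_{i=1}^s e_i S$ where the $e_i$ are the primitive idempotents of $S$ and $e_i S \isom \F_{p^{d_i}}$. Writing $e_i = \sum_{j \in Q_i} \epsilon_j$ in terms of the primitive idempotents $\epsilon_j$ of $R$, the identity $\frakp_i = (1-e_i)S$ together with $\frakP_j = \{x \in R : x_j = 0\}$ yields $Q_i = P_i$. Next, choose any primitive element $\alpha_i$ of the field $e_i S$; for each $j \in P_i$ its image in $R_j$ generates a subfield of order $p^{d_i}$ (so $d_i \mid f_j$), and this subfield coincides with the one containing $a_j$, yielding a unique $0 \le k_j < d_i$ with $\alpha_i|_j = a_j^{p^{k_j}}$. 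Replacing $\alpha_i$ by $\alpha_i^{p^\ell}$ shifts all $k_j$ simultaneously by $\ell$ modulo $d_i$, so the normalization $k_{\min P_i} = 0$ pins down $\alpha_i$ as the element in \eqref{eq:gens}. These $\alpha_i$ generate $S$, because $\alpha_i^{p^{d_i}-1} = e_i$ recovers the idempotents, after which each summand $e_i S = \F_p[\alpha_i]\cdot e_i$ is obtained.

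\emph{Part (3).} For the converse, set $g_i := \sum_{j \in P_i} a_j^{p^{e_j}}$ and $T := \F_p[g_1,\ldots,g_s]$. Disjointness of the $P_i$ forces $g_i g_{i'} = 0$ for $i \ne i'$, while $\tilde e_i := g_i^{p^{d_i}-1}$ is the idempotent of $R$ supported on $P_i$ (since each $a_j^{p^{e_j}}$ is a unit of $\F_{p^{d_i}} \subseteq R_j$). These are orthogonal and sum to $1$, whence $T = \bigoplus_i \tilde e_i T$. In the summand $\tilde e_i R = \bigoplus_{j \in P_i} R_j$ with identity $\tilde e_i$, every coordinate of $g_i$ lies in the common subfield $\F_{p^{d_i}}$ and shares the minimal polynomial $\mu(X)$ of a primitive element of $\F_{p^{d_i}}$; hence the minimal polynomial of $g_i$ there is $\mu(X)$, so $\tilde e_i T = \F_p[g_i] \isom \F_{p^{d_i}}$ and $T \isom \bigoplus_{i=1}^s \F_{p^{d_i}}$.

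The main pitfall is the normalization in Part (2): one has to verify that the Frobenius orbit of $a_j$ inside the subfield of $R_j$ of order $p^{d_i}$ parametrizes all its primitive elements, and must coordinate this Galois conjugation uniformly across all $j \in P_i$, since it is the \emph{simultaneous} shift $\alpha_i \mapsto \alpha_i^{p^\ell}$ that makes the normalization both possible and unique.
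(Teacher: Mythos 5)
Your proposal follows the same overall strategy as the paper (decompose the semisimple subring $S$ into field summands indexed by the blocks $P_i$, pick a primitive element per summand, normalize via Frobenius), and parts~(1) and~(3) are fine. But there is a genuine gap in part~(2) at the sentence beginning ``choose any primitive element $\alpha_i$ of the field $e_iS$.''

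You then assert that its image in $R_j$, lying in the unique subfield of order $p^{d_i}$, ``yields a unique $0\le k_j<d_i$ with $\alpha_i|_j=a_j^{p^{k_j}}$.'' That step is false for an arbitrary primitive element: $a_j^{p^0},\dots,a_j^{p^{d_i-1}}$ are the $d_i$ roots of the minimal polynomial of $a_j$ over $\F_p$, whereas $\F_{p^{d_i}}$ has $\varphi(p^{d_i}-1)$ primitive elements, and for $d_i\ge 3$ (already for $\F_8$ or $\F_9$) these counts disagree. Being a generator of the same subfield only gives $\alpha_i|_j=a_j^{m_j}$ for some $m_j$ coprime to $p^{d_i}-1$, not $m_j$ a power of $p$. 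Your closing ``pitfall'' remark — that one should verify the Frobenius orbit of $a_j$ ``parametrizes all its primitive elements'' — is precisely the false claim you are implicitly relying on; it is not a verification step but the point where the argument breaks.

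The fix, which is what the paper does, is to \emph{choose $\alpha_i$ to have minimal polynomial $h_i$}, where $h_i$ is the minimal polynomial over $\F_p$ of $w^{(p^f-1)/(p^{d_i}-1)}$ (equivalently of $a_j$). Such an $\alpha_i\in e_iS\isom\F_{p^{d_i}}$ exists because $h_i$ is an irreducible degree-$d_i$ polynomial over $\F_p$. Since $h_i$ is irreducible, each projection $\alpha_i|_j$ also has minimal polynomial $h_i$ and is therefore a genuine Frobenius conjugate of $a_j$, which is exactly what the existence of the $k_j$ requires; from there your simultaneous-shift normalization and generation argument go through. With that repair, your proof and the paper's coincide in substance. (Minor additional note: $\tilde e_i T$ equals $\tilde e_i\F_p[g_i]$, not $\F_p[g_i]$ itself; the latter contains $1_R$, the former contains $\tilde e_i$. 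This does not affect the isomorphism type, but the equality as written is off.)
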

\begin{proof} For $1 \le i \le r$ let $h_i$ be the minimal polynomial of $w^{\frac{p^f-1}{p^{d_i}-1}}$ over $\F_p$.

(1) This is well known since $\frakP_1,\dots,\frakP_r$ are the maximal ideals of $R$.

(2) The ring $R$ has trivial Jacobson radical and thus the same holds for $S$.
So $S \isom \bigoplus_{i=1}^s S_i$ is a direct sum of fields $S_i \isom S/\frakp_i \isom \F_{p^{d_i}}$.
The ideal $RS_i$ is orthogonal to $R \frakp_i \subseteq \bigcap_{j \in P_i} \frakP_j = \sum_{k \notin P_i} R_k$.
Hence $S_i \subseteq R S_i \subseteq \sum_{j \in P_i} R_j$.
So for $j \in P_i$ we have an embedding $S_i \to R_j$ and thus $d_i \mid f_j$.
There exists some $\alpha_i \in S_i$ that has minimal polynomial $h_i$.
We can write $\alpha_i = \sum_{j \in P_i} \beta_j$ with $\beta_j \in R_j$.
Since $h_i$ is irreducible, $\beta_j$ has minimal polynomial $h_i$ as well.
Hence $\beta_j$ is of the form $ a_j^{p^{e_j}} $ for some unique $0 \le e_j < d_j$.
After replacing $\alpha_i$ with some $p$-power, we can arrange that $e_j = 0$ for the smallest integer $j \in P_i$.

(3) Every summand in $\sum_{j \in P_i} a_{j}^{p^{e_{j}}}$ has minimal polynomial $h_i$ by construction.
Hence the sum itself has minimal polynomial $h_i$. So it generates a subfield of $\sum_{j \in P_i} R_j$ isomorphic to $\F_{p^{d_i}}$.
\end{proof}

Theorem~\ref{CharpRad} and Proposition~\ref{subrings} immediately yield the following algorithm.

\begin{alg}\label{alg:OrderRadical} \mbox{}\\
\begin{algorithm}[H]
\DontPrintSemicolon
\KwIn{A prime $p$ and a full lattice $I$ in $K$ such that $p\Z + I^2 \subseteq I \subseteq \JJ_p(\Z_K)$.}
\KwOut{A list of all orders in $K$ with $p$-radical $I$.}
Compute $M:= M(I)$ and the prime ideals $\frakP_1,\dots,\frakP_r$ of $M$ over $p$.\;
Let $f_i$ be the degree of $M/\frakP_i$ and set $f = \lcm(f_1,\dots,f_r)$.\;
Fix some primitive element $w \in \F_{p^f}^*$.\;
Fix some integer $e \ge 1$ with $\JJ_p(M)^{p^e} \subseteq I$.\;
\For{$1 \le i \le r$}
{
Pick $a_i \in \prod_{j \ne i} \frakP_j$ such that $a_i + \frakP_i \in M/\frakP_i$ has the same minimal polynomial as $w^ {\frac{q^f-1}{q^{f_i}-1}}$ and set $u_i = a_i^{p^{e}}$.\;
}
\ForEach{partition $\{ P_1,\dots,P_s \}$ of $\{1,\dots,r\}$}
{
For $1\le j \le s$ let $g_j = \gcd\{ f_k \mid k \in P_j \}$.\;
\ForEach{$d \in \Z_{\ge 1}^s$ with $d_j \mid g_j$ for all $1 \le j \le s$}{
Let $E = \{ \varepsilon \in \Z_{\ge 0}^r \mid \varepsilon_i < d_j \mbox{ if } \varepsilon_i \in P_j \mbox{ and } \varepsilon_i = 0 \mbox{ if } i = \min(P_j) \}$.\;
For $i \in P_j$ let $v_i = u_i^{\frac{p^{f_i}-1}{p^{d_j}-1}}$.\;
\ForEach{$\varepsilon \in E$}{
Let $\Lambda$ be the order generated by $I$ and $\{ \sum_{i \in P_j} v_i^{p^{\varepsilon_i}} \mid 1 \le j \le s \}$.\;
Append $\Lambda$ to $\calL$.
}
}
}
\Return{$\calL$}.
\end{algorithm}
\end{alg}

Given positive integers $k,m$ let $\sigma_k(m) = \sum_{d \mid m} d^k$ be the sum of the $k$-th powers of the positive divisors of $m$.

\begin{corollary}
Suppose $I$ is the $p$-radical of an order in $K$ and let $\frakP_1,\dots,\frakP_r$ be the maximal ideals of $M(I)$ (or $\OO_I$) over $p$.
Let $f_i$ be the degree of the residue field of $\frakP_i$ over $\F_p$.
The number of orders in $K$ with $p$-radical $I$ is given by
\[  \sum_{ \{P_1,\dots,P_s\} }  \prod_{i=1}^s \sigma_{\# P_i-1}( \gcd( f_k \mid k \in P_i ) ) \]
where the sum ranges over all partitions $\{P_1,\dots,P_s\}$ of $\{1,\dots,r\}$.
\end{corollary}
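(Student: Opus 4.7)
The plan is to reduce the count to subrings of the semisimple ring $R = \OO_I/I$ and then apply the parametrization of Proposition~\ref{subrings}. By Theorem~\ref{CharpRad}, the orders in $K$ with $p$-radical $I$ are exactly the orders between $\Z + I$ and $\OO_I$, and under $\OO \mapsto \OO/I$ these correspond bijectively to subrings of $R$. Since $\OO_I/I$ is the largest semisimple subring of $M(I)/I$ and the maximal ideals of $\OO_I$ lying over $p$ are the preimages of the $\frakP_i$, we have $R \isom \bigoplus_{i=1}^r \F_{p^{f_i}}$. Thus it suffices to count subrings of $R$.

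Next I would invoke Proposition~\ref{subrings}, which attaches to each subring $S \subseteq R$ the following data: a partition $\{P_1,\dots,P_s\}$ of $\{1,\dots,r\}$ coming from the maximal ideals of $S$, residue degrees $d_i$ with $d_i \mid g_i := \gcd\{f_k \mid k \in P_i\}$, and exponents $e_j$ for $j \in P_i$ with $0 \le e_j < d_i$ and $e_j = 0$ whenever $j = \min(P_i)$. Conversely, part (3) of Proposition~\ref{subrings} shows that every admissible choice of such data arises from a unique subring. So the number of subrings of $R$ equals the number of admissible tuples of data.

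It remains to count. For a fixed partition $\{P_1,\dots,P_s\}$ with $|P_i| = k_i$, an admissible choice on block $P_i$ consists of a divisor $d_i$ of $g_i$ together with $k_i - 1$ exponents in $\{0, 1, \dots, d_i - 1\}$ (the $\min(P_i)$ coordinate being forced to $0$). This contributes
\[
\sum_{d_i \mid g_i} d_i^{k_i - 1} = \sigma_{k_i - 1}(g_i)
\]
choices on block $P_i$. Since the choices on different blocks are independent, the blocks multiply, and summing over partitions gives exactly the stated formula. The only mild subtlety is to be sure that the parametrization in Proposition~\ref{subrings} is genuinely a bijection (and that the $e_j$ indeed range independently over $\{0,\dots,d_i-1\}$ subject only to the normalization at $\min(P_i)$), which is precisely the content of parts (2) and (3) of that proposition; no further work is needed.
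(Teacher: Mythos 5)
Your argument is correct and follows the same route as the paper: reduce via Theorem~\ref{CharpRad} to counting subrings of the semisimple ring $\OO_I/I \isom \bigoplus_i \F_{p^{f_i}}$, then apply the parametrization of Proposition~\ref{subrings} and count the data (partition, divisors $d_i \mid g_i$, exponents) blockwise. The paper's own proof is just a condensed version of exactly this count, so there is nothing to add.
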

\begin{proof}
For a non-empty subset $S$ of $\{1,\dots,r\}$ let $g(S) = \gcd( f_k \mid k \in S )$.
By Proposition~\ref{subrings} the number of orders is
\[ \sum_{ \{P_1,\dots,P_s\} } \sum_{d_i \mid g(P_i)} d_1^{ \#P_1 -1 } \cdot \ldots \cdot d_s^{ \#P_s -1 }  = \sum_{ \{P_1,\dots,P_s\} } \prod_{i=1}^s \sum_{d_i \mid g(P_i)} d_i^{ \#P_i -1 }  \]
as claimed.
\end{proof}

When Algorithm~\ref{alg:OrderRadical} is employed for various $I$ over the same prime $p$, the computation of the multiplicator rings $M(I)$ turns out to be quite time consuming.
Also, the algorithm makes no use of the fact that the orders with $p$-radical $I$ form a lattice (in the set theoretic sense).

Thus we present a better approach.
To this end, let $\OO$ be an order in $K$.
Let $I$ be the $p$-radical of some order such that $\OO_I \subseteq \OO$, which always holds if $M(I) \subseteq \OO$.

Theorem~\ref{CharpRad} shows that the orders in $K$ with $p$-radical $I$ form a lattice.
We wish to compare these lattices corresponding to $I$ and $J:= \JJ_p(\OO)$.
Note that $\OO_I + J$ is an order and the isomorphism theorem shows
\[ (\OO_I+J) / J \isom \OO_I / (J \cap \OO_I) = \OO_I/I \:. \]
Thus the lattice corresponding to $I$ can be viewed as a sublattice of the one corresponding to $J$.
The next result shows how to explicitly find this sublattice.

\begin{prop}\label{prop:OI}
In the situation above, let $\Gamma$ be an order with $\JJ_p(\Gamma) = J$.
Let $\frakp_1,\dots,\frakp_r$ be the prime ideals of $\Gamma$ over $p$.
For $1 \le i \le r$ pick some $a_i \in \prod_{j \ne i} \frakp_j$ such that $a_i + \frakp_i$ is a primitive element of $(\Gamma/\frakp_i)^*$.
Set $b_i := a_i^{p^e}$ where $e \ge 1$ such that $J^{p^e} \subseteq I$. 
Then the following are equivalent:
\begin{enumerate}
\item $\Gamma \subseteq \OO_I + J$.
\item $b_i I \subseteq I$ for all $1 \le i \le r$.
\item $\langle I, b_1,\dots,b_r \rangle \subseteq \OO_I$.
\end{enumerate}
\end{prop}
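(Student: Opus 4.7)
My plan is to prove the three-way equivalence by establishing $(1) \Rightarrow (3)$, $(3) \Rightarrow (1)$, and $(2) \Rightarrow (3)$, while $(3) \Rightarrow (2)$ is immediate (since $\OO_I \subseteq M(I)$ and $I$ is a $M(I)$-ideal). The recurring tool is that several quotient rings of interest have characteristic $p$ (because $p \in I$), making the power map $\varphi(x) = x^{p^e}$ a ring homomorphism, and Proposition~\ref{semisimpleR} then describes the largest semisimple subring as the image of $\varphi$.

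I would first show $(1) \Rightarrow (3)$ by working inside the characteristic-$p$ ring $\OO_J/I$, where $\OO_J$ is the order from Theorem~\ref{CharpRad} applied to $J$; note that $\OO_J$ contains $\OO$, hence $\OO_I$, as well as $J$ and $\Gamma$. Writing $a_i = c_i + j_i$ with $c_i \in \OO_I$ and $j_i \in J$, the Frobenius identity gives $b_i = (c_i + j_i)^{p^e} \equiv c_i^{p^e} + j_i^{p^e} \pmod{I}$. Since $j_i^{p^e} \in J^{p^e} \subseteq I$, this reduces to $b_i \equiv c_i^{p^e} \pmod{I}$, and as $c_i^{p^e} \in \OO_I$ we conclude $b_i \in \OO_I$.

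For $(3) \Rightarrow (1)$ I would use Lemma~\ref{JJmeet} (giving $\OO_I \cap J = I$) to identify $\OO_I/I$ with the subring $(\OO_I+J)/J$ of $\OO_J/J$. The latter is a product of finite fields of characteristic $p$, so $\varphi$ is an automorphism of $\OO_J/J$; its restriction to the subring $\OO_I/I$ is again an automorphism, being an injective endomorphism of a finite ring. Hence $\bar b_i \in \OO_I/I$ implies $\bar a_i = \varphi^{-1}(\bar b_i) \in \OO_I/I$, i.e., $a_i \in \OO_I + J$. Because the $a_i + \frakp_i$ are primitive in $(\Gamma/\frakp_i)^*$, the $a_i$'s together with $J$ generate $\Gamma$ as a ring, yielding $\Gamma \subseteq \OO_I + J$.

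The main obstacle is $(2) \Rightarrow (3)$: now $a_i$ need not lie in $M(I)$, so there is no direct Frobenius lift inside $M(I)/I$. My strategy exploits that $b_i \in M(I) \cap \Gamma$ together with the observation that the minimal polynomial $\mu \in \F_p[X]$ of $\bar b_i$ is intrinsic, being the monic generator of $\{f \in \F_p[X] : f(b_i) \in I\}$ and hence the same whether computed in $\Gamma/I$ or in $M(I)/I$. In $\Gamma/I$ we have $\bar b_i = \varphi(\bar a_i) \in \varphi(\Gamma/I)$, the largest semisimple subring of $\Gamma/I$ by Proposition~\ref{semisimpleR}, so $\mu$ is separable. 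Therefore $\F_p[\bar b_i] \isom \F_p[X]/(\mu)$, viewed now inside $M(I)/I$, is a reduced subring and so is contained in the largest semisimple subring $\OO_I/I$ of $M(I)/I$ identified in Theorem~\ref{CharpRad}. This gives $b_i \in \OO_I$ and hence $\langle I, b_1, \ldots, b_r \rangle \subseteq \OO_I$.
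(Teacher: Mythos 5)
The gap is in your $(2) \Rightarrow (3)$ direction. You invoke ``the largest semisimple subring of $\Gamma/I$ by Proposition~\ref{semisimpleR}'', but $\Gamma/I$ is not a ring: under hypothesis $(2)$ all you know is $b_iI\subseteq I$, which puts $b_i\in M(I)$ but says nothing about the other elements of $\Gamma$, and in general $I\Gamma\subsetneq I$ is false — one only has $I\Gamma\subseteq J\Gamma = J$, so $I$ need not be an ideal of $\Gamma$. As you note, the minimal polynomial $\mu$ with $\mu(b_i)\in I$ is well defined (the set $\{f\in\F_p[X]: f(b_i)\in I\}$ is an ideal precisely because $b_i\in M(I)$), so that part is fine; what is missing is any licit argument that $\mu$ is \emph{separable}. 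The natural repair, replacing $\Gamma/I$ by the honest ring $\Gamma/(I\Gamma)$, only shows separability of the minimal polynomial of $b_i$ modulo $I\Gamma$, and since $I\subseteq I\Gamma$ that polynomial merely \emph{divides} $\mu$; separability of a proper divisor says nothing about $\mu$. So the reduction to ``$\F_p[\bar b_i]$ is a reduced subring of $M(I)/I$'' does not go through as written.

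The paper avoids this trap by closing the cycle the other way: from $(2)$ it passes to $(1)$, not $(3)$. Since $b_i\in M(I)$, one applies Proposition~\ref{semisimpleR} to the genuine ring $M(I)/I$ (where $I$ \emph{is} an ideal) to find $f\geq 0$ with $c_i:=b_i^{p^f}\in\OO_I$, and then uses that $\Gamma=\langle J,a_1,\dots,a_r\rangle=\langle J,c_1,\dots,c_r\rangle$ because raising to a $p$-power is a bijection on each residue field $\Gamma/\frakp_i$; this gives $\Gamma\subseteq\OO_I+J$ directly. Your $(3)\Rightarrow(1)$ step, by contrast, is correct and is a nice alternative to the paper's $(2)\Rightarrow(1)$: identifying $\OO_I/I$ with the subring $(\OO_I+J)/J$ of the semisimple ring $\OO_J/J$ and using that a $p$-power map is bijective on any finite reduced $\F_p$-algebra is clean and arguably more transparent. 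Your $(1)\Rightarrow(3)$ is essentially the paper's. To salvage your overall plan you could simply replace your $(2)\Rightarrow(3)$ with the paper's $(2)\Rightarrow(1)$, keeping your $(3)\Rightarrow(1)$ as a bonus explanation, or rework the separability argument inside $M(I)/I$ — e.g., by showing directly that $b_i^{p^m}\equiv b_i\pmod I$ for suitable $m$ — rather than inside the non-ring $\Gamma/I$.
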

\begin{proof}
%
(3) $\implies$ (2) is clear as $I$ is an ideal of $\OO_I$. 

(2) $\implies$ (1): We have $b_1,\dots,b_r \in M(I)$.
By Proposition \ref{semisimpleR} and Theorem~\ref{CharpRad}, there exists some integer $f \ge 0$ such that $c_i:= b_i^{p^f} \in \OO_I$ for all $i$.
Hence $\Gamma = \langle J, a_1,\dots,a_r \rangle = \langle J, c_1,\dots,c_r \rangle \subseteq \OO_I + J$.


(1) $\implies$ (3):
Write $a_i = x_i + \pi_i$ with $x_i \in \OO_I$ and $\pi_i \in J$.
From $p \in I$ and $\pi_i^{p^e} \in I$ we get $b_i = a_i^{p^e} \equiv x_i^{p^e} \pmod{I}$.
Hence $b_i \in \OO_I$ as claimed.
\end{proof}

The new approach to find all orders with $p$-radical $I$ is now as follows.
We first compute the set-theoretic lattice of all orders $\OO_1 = \Z_K,\dots, \OO_t = \Z + \JJ_p(\Z_K)$  with $p$-radical $\JJ_p(\Z_K)$ using Algorithm~\ref{alg:OrderRadical}.
For $1 \le i \le t$ let $\frakp_{i,1},\dots,\frakp_{i,r_i}$ be the prime ideals of $\OO_i$ over $p$.
Step 13 of the algorithm also computes a list $(a_{i,1},\dots,a_{i,r_i})$ of elements in $\OO_i$ such that $a_{i,j}$ generates $(\OO_i/\frakp_{i,j})^*$ and $a_{i,j} \in \frakp_{i,k}$ for all $k \ne j$.

Given any $p$-radical $I$, there exists some index $i:= i(I)$ such that 
\[ \OO_I = \langle I, a_{i,1}^{p^{e}}, \dots, a_{i,r_i}^{p^{e}} \rangle \]
where $e \ge 1 $ such that $\JJ_p(\Z_K)^{p^e} \subseteq I$.
Proposition~\ref{prop:OI} can now be used to compute this index $i$.
There are two things to notice.
\begin{enumerate}
\item
The powers $a_{i,j}^{p^{e}}$ can be cached, so they only have to be computed once.
If the previously chosen exponent $e$ turns out to be too small for the ideal~$I$, one can always increase it later.
\item When enumerating orders in Sections~\ref{sec:enum} to \ref{sec:over} we proceed top to bottom.
Thus we already know the index $j$ corresponding to $\JJ_p(\OO)$ and $\OO_{i}$ must be contained in $\OO_j$.
This speeds up the process of finding the correct index $i$.
\end{enumerate}

\section{Enumerating orders}\label{sec:enum}

Let $K$ be a number field of degree $n$.
In this section we develop an algorithm to enumerate orders in $K$ up to a given index in the maximal order $\Z_K$.

\begin{remark}\label{order_pe}
Let $\Lambda$ be an order in $K$ of index $m = [\Z_K : \Lambda]$ in $\Z_K$.
Let $m = p_1^{m_1}\cdot \ldots \cdot p_r^{m_r}$ be the prime factorization of $m$.
For $1 \le i \le r$ set $\Lambda_i := \Lambda + p_i^{m_i}\Z_K$.
Then $\Lambda_i$ is an order in $K$ with $[\Z_K : \Lambda_i] = p_i^{m_i}$ and $\Lambda = \bigcap_{i=1}^r \Lambda_i$.
\end{remark}

So it is enough to enumerate the orders $\Lambda$ with $[\Z_K : \Lambda] \mid p^e$ for some fixed prime~$p$.

Our method is based on the following result of Dade, Taussky and Zassenhaus.

\begin{theorem}[\protect{\cite[Theorem~C]{DTZ}}]\label{DTZ}
Let $I$ be a full lattice in $K$. Then there exists an integer $1 \le r < n$ such that $I^r$ is invertible over $M(I^r)$.
\end{theorem}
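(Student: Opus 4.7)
The plan is to study the ascending chain of multiplicator rings
\[
M(I) \subseteq M(I^2) \subseteq M(I^3) \subseteq \cdots \subseteq \Z_K,
\]
which ascends because $x I^r \subseteq I^r$ gives $x I^{r+1} = x I^r \cdot I \subseteq I^r \cdot I = I^{r+1}$. All terms are full lattices in $K$ sitting below the maximal order, so the chain stabilizes; denote its eventual value by $\OO$.

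The first step is to show that once the chain stabilizes, namely $M(I^r) = M(I^{r+1}) = \OO$, the ideal $I^r$ is invertible over $\OO$. Setting $J := (\OO : I^r)$, the product $I^r J$ is an ideal of $\OO$, and I would prove $I^r J = \OO$ by localizing at each maximal ideal $\frakp$ of $\OO$. In the one-dimensional Noetherian local ring $\OO_\frakp$, the equality $M(I^r_\frakp) = M(I^{r+1}_\frakp) = \OO_\frakp$ forces $I^r_\frakp$ to be principal: a generator of $I_\frakp$ acts on the finite length $\OO_\frakp$-module $I^r_\frakp / \frakp I^r_\frakp$, and the stabilization of multiplicator rings ensures this action is surjective, which by Nakayama forces $I^r_\frakp$ to have a single $\OO_\frakp$-generator. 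Globalizing yields the desired invertibility.

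The second step is to establish the sharp bound $r < n$. The key input is that $[K : \Q] = n$, so every $\alpha \in I$ satisfies a monic integral relation
\[
\alpha^n + c_{n-1} \alpha^{n-1} + \cdots + c_1 \alpha + c_0 = 0
\]
of degree at most $n$. Given a $\Z$-spanning set $\alpha_1, \ldots, \alpha_n$ of $I$, applying such relations to products of the $\alpha_i$'s lets one express elements of $I^n$ in terms of elements of $\Z + I + I^2 + \cdots + I^{n-1}$. Tracking this carefully shows that $I^n$ is contained in the $M(I^{n-1})$-module generated by $I^{n-1}$, which forces $M(I^{n-1}) = M(I^n)$. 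Step~1 then yields invertibility of $I^{n-1}$, proving the theorem (possibly with a smaller $r$ if the chain happens to stabilize earlier).

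The main obstacle is the quantitative step: the naive ascending-chain argument only bounds the stabilization index by $[\Z_K : M(I)]$, which can be arbitrarily large, so extracting the sharp bound $r < n$ genuinely requires using the field degree. I would expect the cleanest route to go through localization, applying a Cayley--Hamilton-style argument to the action of $I_\frakp$ on each finitely generated $\OO_\frakp$-module $I^r_\frakp$: the rank bound coming from the Krull--Schmidt structure of $\OO_\frakp$-lattices, together with the integrality relation of degree $n$, should localize to give the bound $r \le n-1$ uniformly across primes.
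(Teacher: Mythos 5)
The paper does not prove this statement --- it is given as a verbatim citation of Dade--Taussky--Zassenhaus \cite[Theorem~C]{DTZ} --- so there is no internal proof to compare against; but your two-step attempt has concrete gaps. In Step~1 you speak of ``a generator of $I_\frakp$,'' presupposing that $I_\frakp$ is principal, which cannot be assumed. More importantly, the hypothesis $M(I^r_\frakp) = M(I^{r+1}_\frakp) = \OO_\frakp$ is a statement about which elements of $K$ stabilize a lattice and does not visibly give the surjectivity of any multiplication map that you could feed into Nakayama; the notion that actually yields principality in this setting is Lipman's \emph{stability} (the existence of a single $\alpha$ with $I^{r+1}_\frakp = \alpha\, I^r_\frakp$), which is strictly stronger than mere equality of multiplicator rings.

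The genuine difficulty, which you correctly identify, is getting the stabilization index below $n$, and that is exactly where Step~2 fails. Since $I^{n-1}$ is already an $M(I^{n-1})$-module, your claim that $I^n$ lies in ``the $M(I^{n-1})$-module generated by $I^{n-1}$'' is just $I^n \subseteq I^{n-1}$. After normalizing $\Z \subseteq I \subseteq \Z_K$, Cayley--Hamilton controls the pure powers, $\alpha^n \in \Z + \Z\alpha + \cdots + \Z\alpha^{n-1} \subseteq I^{n-1}$ for each $\alpha \in I$, but $I^n$ is spanned by mixed products $\alpha_{i_1}\cdots\alpha_{i_n}$, and polarizing over $\Z$ to reach these introduces factors of $n!$, giving only $n!\, I^n \subseteq I^{n-1}$. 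So neither $I^n \subseteq I^{n-1}$ nor even the weaker target $M(I^{n-1}) = M(I^n)$ is established by the argument you sketch. In DTZ the bound $r<n$ is obtained by a delicate local analysis (see their Theorem~2.2.6 and Corollary~2.2.17, which the present paper invokes for Lemma~\ref{lemma:uH}), not by an integrality/Cayley--Hamilton argument of the shape you propose.
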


We are interested in enumerating those lattices $I$ that are $p$-radicals such that $I^r$ is an invertible ideal for some fixed order $\OO$.
By Theorem~\ref{CharpRad}, these ideals are characterized by the following three conditions.
\begin{align}
\begin{split}\label{eq:cond}
&p \in I,\\
&I^2 \subseteq I \subseteq \JJ_p(\OO), \\
&I^r \text{ is an invertible ideal of }\OO  \text{ for some minimal } 1 \le r < n.
\end{split}
\end{align}

Our basic idea to enumerate all orders $\OO$ in $K$ with index $[\Z_K:\OO] \mid p^e$ is now the following.
\begin{alg}\label{alg:orders}\mbox{}\\
\begin{algorithm}[H]
\DontPrintSemicolon
\KwIn{An algebraic number field $K$ and a prime power $p^e$.}
\KwOut{A list $\calL$ of all orders in $K$ whose index in $\Z_K$ divides $p^e$.}
Initialize the lists $\calI = ( \JJ_p(\Z_K) )$ and $\calL = ()$.\;
\While{$\calI$ is not empty}{
  Remove the first element $I$ from the list $\calI$.\;
  Compute the set $\calS$ of all orders in $K$ with $p$-radical $I$.\;
  \For{$\OO \in \calS$}{
    If $[\Z_K : \OO] \le p^e$ append $\OO$ to $\calL$.\;
    If $[\Z_K : \OO] < p^e$ compute the set of all lattices in $K$ satisfying \eqref{eq:cond} and append them to $\calI$, cf. Algorithm \ref{alg:unram} and Section~\ref{sec:ram}.\;
}
}
\Return{$\calL$}.
\end{algorithm}
\end{alg}

Note that condition \eqref{eq:cond} arranges the orders of $K$ whose index in $\Z_K$ is a $p$-power, in a tree.
Hence the above algorithm never finds the same order twice.

If $p$ is unramified in $K$, we can make the last condition of \eqref{eq:cond} explicit.

\begin{lemma}\label{unram:I}
Let $I$ be a lattice that satisfies \eqref{eq:cond}.
\begin{enumerate}
\item $I \OO$ is an invertible ideal of $\OO$.
\item If $\JJ_p(\OO) \Z_K = p \Z_K$, then $I \subseteq I \OO = p \OO$ and $I^r = p^r \OO$.
\item If $p$ is unramified in $K$ then $\JJ_p(\OO) \Z_K = p \Z_K$.
\end{enumerate}
\end{lemma}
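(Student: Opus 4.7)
The plan is to take the three parts in the order (1), (3), (2), with most of the work in (2).

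For (1), commutativity gives $(I\OO)^r = I^r\OO$, and since $I^r$ is invertible over $\OO$ it is stable under $\OO$-multiplication, so $I^r\OO = I^r$. Thus $(I\OO)^r = I^r$ is invertible, and the identity $(I\OO)\cdot\bigl((I\OO)^{r-1}(I^r)^{-1}\bigr)=\OO$ exhibits $I\OO$ as an invertible $\OO$-ideal.

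For (3), if $p$ is unramified in $K$ then $p\Z_K = \bigcap_\frakq \frakq$ with $\frakq$ ranging over the primes of $\Z_K$ above $p$. Each such $\frakq$ contracts to some prime $\frakp_i$ of $\OO$ above $p$, so $\JJ_p(\OO) = \bigcap_i \frakp_i \subseteq \frakq$. Intersecting over $\frakq$ yields $\JJ_p(\OO) \subseteq p\Z_K$, hence $\JJ_p(\OO)\Z_K \subseteq p\Z_K$; the reverse inclusion is immediate from $p \in \JJ_p(\OO)$.

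For (2), assume $\JJ_p(\OO)\Z_K = p\Z_K$. From $I\OO \subseteq \JJ_p(\OO) \subseteq p\Z_K$ together with $p\OO \subseteq I\OO$ one reads off $(I\OO)\Z_K = p\Z_K$, and once $I\OO = p\OO$ is established both $I \subseteq p\OO$ and $I^r = (I\OO)^r = p^r\OO$ are immediate. The technical heart is therefore to pass from the equality $(I\OO)\Z_K = p\Z_K$ inside $\Z_K$ back to the equality $I\OO = p\OO$ inside $\OO$. My plan is to introduce $\mathfrak{k} := p(I\OO)^{-1}$, which is an invertible $\OO$-ideal with $\mathfrak{k} \subseteq \OO$ (because $p \in I\OO$) and $\mathfrak{k}\Z_K = \Z_K$, and to show $\mathfrak{k} = \OO$ locally at each prime of $\OO$. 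At primes not above $p$ this is trivial since $p$ is a unit. At a prime $\frakp$ above $p$, invertibility gives $\mathfrak{k}_\frakp = x\OO_\frakp$ for some $x \in \OO_\frakp$; the condition $\mathfrak{k}\Z_K = \Z_K$ forces $x$ to be a unit in $\Z_{K,\frakp}$, the semilocal Dedekind localization of $\Z_K$ at $\OO\setminus\frakp$, whereas $\frakp\Z_{K,\frakp}$ is a proper ideal there (it is contained in $\frakq\Z_{K,\frakp}$ for any prime $\frakq$ of $\Z_K$ above $\frakp$). Thus $x \in \frakp\OO_\frakp$ would yield $x\Z_{K,\frakp} \subseteq \frakp\Z_{K,\frakp} \subsetneq \Z_{K,\frakp}$, a contradiction; so $x$ is a unit in $\OO_\frakp$, $\mathfrak{k}_\frakp = \OO_\frakp$ for all $\frakp$, and $\mathfrak{k} = \OO$, which rearranges to $I\OO = p\OO$.
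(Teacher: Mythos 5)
Your proof is correct, and parts (1) and (3) match the paper's arguments essentially verbatim (the paper deduces (3) from $p\Z_K \subseteq \JJ_p(\OO)\Z_K \subseteq \JJ_p(\Z_K) = p\Z_K$, which is the same fact organized slightly differently). For part (2) you and the paper share a skeleton — both reduce to the invertibility of $I\OO$, hence local principality, and both use the equality $(I\OO)\Z_K = p\Z_K$ — but the finishing arguments diverge. The paper works in the $p$-adic completion: writing $I\OO_p = x\OO_p$ and observing $x\Z_{K,p} = p\Z_{K,p}$, it concludes via a norm comparison that $[\OO_p : x\OO_p] = [\OO_p : p\OO_p]$, which combined with the already-known inclusion $p\OO_p \subseteq x\OO_p$ forces equality. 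You instead introduce the invertible correction ideal $\mathfrak{k} = p(I\OO)^{-1} \subseteq \OO$ with $\mathfrak{k}\Z_K = \Z_K$ and kill it locally at every maximal ideal $\frakp$ of $\OO$ by a direct unit argument in the semilocal ring $\Z_{K,\frakp}$: a local generator $x$ of $\mathfrak{k}_\frakp$ that lay in $\frakp\OO_\frakp$ would be trapped in a proper ideal of $\Z_{K,\frakp}$, contradicting $x \in \Z_{K,\frakp}^\ast$. Your version avoids norms of ideals over non-maximal orders at the cost of running over all primes $\frakp$ above $p$ and handling the semilocal extension $\OO_\frakp \subseteq \Z_{K,\frakp}$; the paper's norm argument is terser but leans on the fact that norms detect indices in completions. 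Both are sound.
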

\begin{proof}
(1) The ideal $I^r = I^r\OO = (I\OO)^r$ is invertible.\\
(2) The choice of $I$ yields $p\OO \subseteq I \OO \subseteq \JJ_p(\OO)$.
So it suffices to show that the completion $p\OO_p$ and $I\OO_p$ agree. 
By (1) the ideal $I \OO_p = x \OO_p$ is principal and thus
\[ p\Z_{K,p} \subseteq I\Z_{K,p} = x\Z_{K,p} \subseteq \JJ(\OO_p) \Z_{K,p} = p \Z_{K,p} \:. \]
So the norms of $x$ and $p$ generate the same ideal in $\Z_p$ and thus $p \OO_p = x \OO_p = I\OO_p$.\\
(3) Since $p$ is unramified, we have $p \Z_K \subseteq \JJ_p(\OO) \Z_K \subseteq \JJ_p(\Z_K) = p\Z_K$.
\end{proof}

For the remainder of this section, assume that $I$ is a lattice that satisfies \eqref{eq:cond} with $I \OO = p\OO$, which for example holds if $p$ is unramified in $K$.
The general case will be discussed in the next section.


Then $H:= p^{-1} I$ satisfies
\begin{equation}\label{eq:II}
1 \in H \subseteq \OO, \quad H^r = \OO \quad \mbox{and}\quad pH^2 \subseteq H \:.
\end{equation}
Conversely, if $H$ is a $\Z$-lattice satisfying the above conditions then $I:= p H$ satisfies \eqref{eq:cond}.

\begin{remark}\label{rem:suborder}
Note that the existence of some $r \ge 1$ with $H^r = \OO$ simply means that $H$ does not lie in a maximal suborder of $\OO$.
Moreover we have
\[ H = \OO \iff r=1 \iff I = p\OO \:. \]
\end{remark}

\begin{remark}\label{rem:pOIncl}
Suppose $H$ satisfies \eqref{eq:II}.
Then $H \OO = \OO$ and $\Z + p^{r-1} \OO \subseteq H$.
\end{remark}
\begin{proof}
The inclusion $1 \in H$ shows $\Z \subset H$ as well as $\OO \subseteq H \OO \subseteq \OO^2 = \OO$.
Further $pH^2 \subseteq H$ implies $p^{k} H^{k+1} \subseteq p^{k-1} H^k$ for all $k \ge 1$.
Hence
\[ p^{r-1} \OO = p^{r-1} H^r \subseteq p^{r-2} H^{r-1} \subseteq \ldots \subseteq H \:. \qedhere\]
\end{proof}

\begin{lemma}\label{lem:order1}
Let $H$ be a $\Z$-lattice with $\Z + p\OO \subseteq H \subseteq \OO$.
If $L$ is a full $\Z$-lattice in $\OO$ such that $ L^2 \subseteq H \subseteq \Z + L$ then $H$ is an order.
\end{lemma}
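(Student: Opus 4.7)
The plan is to reduce the problem to checking multiplicative closure, since the other order axioms are essentially given. The lattice $H$ is a finitely generated $\Z$-module by assumption, it has full rank because $p\OO \subseteq H$ already has full rank, and it contains $1$ because $\Z \subseteq H$. So the whole content is to show $H \cdot H \subseteq H$.

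To do so, I would exploit the hypothesis $H \subseteq \Z + L$ to decompose elements of $H$. Given $h_1, h_2 \in H$, write $h_i = n_i + \ell_i$ with $n_i \in \Z$ and $\ell_i \in L$; such a decomposition exists (though it need not be unique, which does not matter). The key observation is that $\ell_i = h_i - n_i$ lies in $H$ itself, because $H$ is a $\Z$-module and $\Z \subseteq H$.

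Now expand
\[ h_1 h_2 = n_1 n_2 + n_1 \ell_2 + n_2 \ell_1 + \ell_1 \ell_2 . \]
The term $n_1 n_2$ lies in $\Z \subseteq H$. The mixed term $n_1 \ell_2$ can be rewritten as $n_1 h_2 - n_1 n_2$, which lies in $H$ since $H$ is a $\Z$-module; similarly $n_2 \ell_1 \in H$. Finally $\ell_1 \ell_2 \in L^2 \subseteq H$ by hypothesis. Adding the four summands gives $h_1 h_2 \in H$, so $H$ is closed under multiplication and therefore an order.

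There is no real obstacle here; the only point worth noticing is that the condition $p\OO \subseteq H$ is used only to guarantee full rank, while multiplicative closure follows purely from the algebraic sandwich $L^2 \subseteq H \subseteq \Z + L$ together with $\Z \subseteq H$. The trick is simply to use the decomposition modulo $\Z$ to reduce products in $H$ to products in $L$, which are controlled by $L^2 \subseteq H$.
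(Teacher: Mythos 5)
Your proof is correct and is essentially the same argument as in the paper: both decompose $h_i = n_i + \ell_i$ with $n_i \in \Z$ and $\ell_i \in L$, note that the linear terms stay in $H$ because $\Z \subseteq H$ and $H$ is a $\Z$-module, and absorb the cross term $\ell_1\ell_2$ via $L^2 \subseteq H$. The only cosmetic difference is that the paper groups the expansion as $(ay'+bx'-ab)+xy$ rather than splitting it into four summands.
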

\begin{proof}
By assumption, $H$ is finitely generated over $\Z$ and contains $1$.
Let $x',y' \in H$. Write $x' = a + x$ and $y' = b + y$ for some $a,b \in \Z$ and $x,y\in L$.
Then
\begin{align*}
 x'y' &=  (a+x)(b+y) = a(b+y) + b(a+x)-ab+xy \\
&= (ay' + bx'-ab) +xy \in H + L^2 = H\:.
\end{align*}
Hence $H$ is closed under multiplication and thus an order.
\end{proof}

\begin{corollary}\label{cor:order1}
Suppose $\OO$ has a unique prime ideal $\frakp$ over $p$ and $[\OO : \frakp] = p$ and $\frakp^2 \subseteq p\OO$.
Then $H = \OO$ is the only lattice satisfying \eqref{eq:II} and the corresponding successor order $\OO':= \Z + p\OO$ satisfies the same conditions as $\OO$.
\end{corollary}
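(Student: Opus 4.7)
The hypotheses give $\OO/p\OO$ a very rigid structure: it is local with maximal ideal $M := \frakp/p\OO$ and residue field $\OO/\frakp \isom \F_p$, and $M^2 = 0$ because $\frakp^2 \subseteq p\OO$. Additively $\OO/p\OO = \F_p \oplus M$, where $\F_p$ is the image of $\Z$.

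For the uniqueness of $H$, my plan is to analyze the image $\bar H := (H + p\OO)/p\OO$ inside $\OO/p\OO$. The key observation is that $\bar H$ is in fact a \emph{subring}: writing $\bar h_i = a_i + m_i$ with $a_i \in \F_p$ and $m_i \in M$, the product $\bar h_1 \bar h_2$ simplifies to $a_1 a_2 + a_1 m_2 + a_2 m_1$ because $M^2 = 0$, and each summand lies in $\bar H$ since $\F_p \subseteq \bar H$ (as $1 \in H$) and each $m_i = \bar h_i - a_i$ itself lies in $\bar H$. Now $H^r = \OO$ forces $\bar H^r = \OO/p\OO$, but $\bar H$ being a subring gives $\bar H^r \subseteq \bar H$, so $\bar H = \OO/p\OO$, i.e.\ $H + p\OO = \OO$. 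Iterating this equality—using $pH \subseteq H$ since $H$ is a $\Z$-module—yields $\OO = H + p^k \OO$ for every $k \ge 1$, and Remark~\ref{rem:pOIncl} supplies $p^{r-1}\OO \subseteq H$, which, setting $k = r-1$, collapses to $H = \OO$.

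For the second assertion I would verify that $\OO' = \Z + p\OO$ is an order by applying Lemma~\ref{lem:order1} with $L = p\OO$; the hypotheses $L^2 = p^2 \OO \subseteq L \subseteq \OO'$ and $\OO' = \Z + L$ are immediate. Lemma~\ref{JJmeet} then gives $\JJ_p(\OO') = \frakp \cap \OO'$; a short calculation (any element $a + px \in \frakp$ with $a \in \Z$ forces $a \in \Z \cap \frakp = p\Z$) shows $\frakp \cap \OO' = p\OO$. Hence $p\OO$ is the unique prime of $\OO'$ above $p$, with residue field $\OO'/p\OO \isom \F_p$, and $(p\OO)^2 = p^2 \OO \subseteq p\Z + p^2 \OO = p\OO'$. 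Thus all three hypotheses on $\OO$ transfer verbatim to $\OO'$.

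The only genuinely non-routine step is establishing that $\bar H$ is closed under multiplication; once that is in place, the rest is bookkeeping via the lemmas already established.
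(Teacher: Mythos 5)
Your proof is correct. Part~(2) matches the paper almost verbatim. For part~(1), your argument takes a slightly different route than the paper, but it rests on the same key structural fact. The paper invokes Lemma~\ref{lem:order1} with $L=\frakp$ to show that \emph{every} index-$p$ sublattice $M\supseteq\Z$ of $\OO$ is an order (hence a maximal suborder), and then appeals to Remark~\ref{rem:suborder} ($H^r=\OO$ iff $H$ lies in no maximal suborder) to kill any proper $H$. You instead pass to $\OO/p\OO\isom\F_p\oplus M$ with $M^2=0$ and observe directly that $\bar H$ is therefore a subring; the forced equality $\bar H^r=\bar H$ together with $\overline{H^r}=\overline\OO$ then collapses $\bar H$ to $\OO/p\OO$, after which you lift via the Nakayama-style iteration $\OO=H+p^k\OO$ and Remark~\ref{rem:pOIncl}. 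In effect you re-derive the relevant special case of Lemma~\ref{lem:order1} inside the quotient and bypass Remark~\ref{rem:suborder} entirely, replacing it with the $p$-adic refinement argument. Both proofs hinge on the identity $\frakp^2\subseteq p\OO$ making multiplicative closure automatic for $\Z$-submodules containing~$1$; the paper's phrasing reuses existing lemmas and is a bit shorter, while yours is self-contained at the price of redoing the subring check.
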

\begin{proof}
Let $M$ be a sublattice of $\OO$ with $[\OO:M] = p$ and $1 \in M$.
For the proof of the first assertion, it suffices to show that $M$ is an order, cf. Remark~\ref{rem:suborder}.
This follows from the previous lemma with $L=\frakp$ and the inclusions $\frakp^2 \subseteq p \OO \subseteq M \subseteq \OO = \Z + \frakp$.
Moreover, $p\OO$ is the unique maximal ideal of $\OO'$ over $p$ and $(p\OO)^2 \subseteq p\Z + p^2 \OO = p \OO'$.
So $\OO'$ satisfies the assumptions of the corollary.
\end{proof}

\begin{remark}\label{rem:power}
Let $H$ be a full lattice in $\OO$ with $H \OO = \OO$ and let $\fraka$ be an ideal of $\OO$.
For $r \ge 1$ this implies
\[ (H + \fraka)^r = H^r + H^{r-1} \fraka + \cdots = {H}^r + H^{r-1} \OO \fraka + \cdots = H^r + \fraka \:. \]
If $[\OO : H]$ is a $p$-power and $\fraka \subseteq p \OO$, then Nakayama's Lemma for $\Z_p$-modules shows that $H^r = \OO$ if and only if $(H + \fraka)^r = \OO$.
\end{remark}

\begin{remark}
Let $H$ be a full lattice in $\OO$ that satisfies \eqref{eq:II} and let $\fraka$ be an ideal of $\OO$.
Then $H + \fraka$ satisfies \eqref{eq:II} (not necessarily with the same minimal exponent~$r$ if $\fraka \not\subseteq p\OO$).
\end{remark}

\begin{lemma}\label{lem:descent1}
Let $\hat{\OO}$ be the largest order with $p$-radical $p\OO + \JJ_p(\OO)^2$.
A lattice $H$ with $\Z + p\OO +\JJ_p(\OO)^2 \subseteq H \subseteq \OO$ satisfies \eqref{eq:II} if and only if $H + \JJ_p(\OO)$ and $H + \hat{\OO}$ both satisfy \eqref{eq:II}.
\end{lemma}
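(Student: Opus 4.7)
The plan is to reduce the statement, via Nakayama's lemma, to a question about subrings of the finite ring $R := \OO/I$, where $I := p\OO + \JJ_p(\OO)^2 = \JJ_p(\hat{\OO})$; the structural input is then Proposition~\ref{semisimpleR}.

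First I would check that the ``auxiliary'' conditions of \eqref{eq:II} --- namely $1 \in H'$, $H' \subseteq \OO$ and $pH'^2 \subseteq H'$ --- are automatic for each of $H' \in \{H,\, H + \JJ_p(\OO),\, H+\hat{\OO}\}$ under the hypotheses $\Z + I \subseteq H \subseteq \OO$ and $\hat{\OO} \subseteq \OO$. They reduce to a handful of inclusions of the form $pA \cdot B \subseteq p\OO \subseteq I \subseteq H'$, using $p\hat{\OO} \subseteq \JJ_p(\hat{\OO}) = I$ for the third lattice. So the lemma boils down to the equivalence
\[ H^r = \OO \text{ for some } r \iff (H + \JJ_p(\OO))^{r_1} = \OO \text{ and } (H + \hat{\OO})^{r_2} = \OO \text{ for some } r_1, r_2. \]
By Nakayama applied in $\OO_p$, each of these conditions is equivalent to the image of the corresponding lattice in $R$ generating $R$ as an $\F_p$-subalgebra.

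Next I would apply Proposition~\ref{semisimpleR} to $R$: since $\JJ(R) = \bar{J} := \JJ_p(\OO)/I$ satisfies $\bar{J}^2 = 0$, one obtains $R = S \oplus \bar{J}$ with $S := \varphi(R)$ the largest semi-simple subring of $R$. Using $\hat{\OO} \subseteq \OO$, the subring $\hat{\OO}/I \subseteq R$ is semi-simple, and by uniqueness of the largest semi-simple subring it coincides with $S$. Translating the three generating conditions is then straightforward: since $\bar{J}$ is a square-zero ideal, the subring generated by $\bar{H} + \bar{J}$ equals $\langle \bar{H} \rangle + \bar{J}$; and writing $\bar{H} = \bar{H}_S + \bar{H}_J$ for the decomposition along $R = S \oplus \bar{J}$, the subring generated by $\bar{H} + S$ equals $S + S\bar{H}_J$ (using $\bar{H}_J^2 \subseteq \bar{J}^2 = 0$). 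So the first condition becomes ``$\langle \bar{H} \rangle$ surjects onto $R/\bar{J} = S$'' and the second becomes ``$S \bar{H}_J = \bar{J}$''.

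The forward implication is then trivial. For the converse, assume $\langle \bar{H}\rangle$ surjects onto $S$ and $S\bar{H}_J = \bar{J}$. Proposition~\ref{semisimpleR} applied to $\langle \bar{H}\rangle$ produces a semi-simple subring $\varphi(\langle \bar{H} \rangle) \isom \langle \bar{H}\rangle /(\langle \bar{H}\rangle \cap \bar{J}) \isom S$; uniqueness of the maximal semi-simple subring of $R$ forces $\varphi(\langle \bar H\rangle) = S$, hence $S \subseteq \langle \bar{H}\rangle$. Then $\bar{H}_J = \bar{H} - \bar{H}_S \in \langle \bar{H} \rangle$ and $\bar{J} = S\bar{H}_J \subseteq \langle \bar{H}\rangle$, so $\langle \bar{H}\rangle \supseteq S + \bar{J} = R$. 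The main delicate point is the identification $\hat{\OO}/I = S$, which ultimately hinges on $\hat{\OO} \subseteq \OO$ together with the uniqueness in Proposition~\ref{semisimpleR}; everything else is a clean translation through the Frobenius decomposition of $R$.
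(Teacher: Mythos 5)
Your proof is correct, and it takes a genuinely different route from the paper's. The paper's argument works with the classification of maximal suborders into types (1), (2), (3): starting from the observation that a lattice fails to satisfy \eqref{eq:II} precisely when it sits inside some maximal suborder $\Lambda \subsetneq \OO$, it invokes Corollary~\ref{cor:JLambda} to conclude that $\JJ_p(\OO) \subseteq \Lambda$ (types (1), (2)) or $\hat{\OO} \subseteq \Lambda$ (type (3)), and the ``if and only if'' falls out of the case distinction. Your argument instead passes to the quotient $R = \OO/I$ and exploits the Frobenius splitting $R = S \oplus \bar{J}$ of Proposition~\ref{semisimpleR}, identifying $S$ with $\hat{\OO}/I$, translating ``$H^r = \OO$'' into ``$\langle \bar{H}\rangle = R$'' via the correspondence theorem, and then checking the equivalence purely ring-theoretically inside $R$. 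What the paper's route buys is uniformity with the other descent lemmas in Section~4, which all reason about which maximal suborders contain a given lattice; but it leans on the non-obvious fact that type-(3) maximal suborders contain $\hat{\OO}$, which (as far as I can tell) is not literally what Corollary~\ref{cor:JLambda} states and requires the Frobenius-image argument of Lemma~\ref{min3} to justify. Your route makes that structural input explicit and self-contained: the uniqueness of the largest semi-simple subring, together with $\bar J^2 = 0$, does all the work, and the key step $\varphi(\langle \bar{H}\rangle) = S \subseteq \langle \bar{H}\rangle$ is a clean application of Proposition~\ref{semisimpleR}. Two small caveats worth recording: the reduction to ``$\langle \bar{H'}\rangle = R$'' is really the correspondence theorem for the ideal $I \subseteq H'$ rather than Nakayama; and the identity $\hat{\OO}/I = S$ needs $\hat{\OO} \subseteq \OO$ (so that $\hat{\OO}/I$ is even a subring of $R$) together with the observation that the largest semi-simple subring of $\OO/I$ coincides with that of $M(I)/I$, both of which you flag but should be written out.
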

\begin{proof}
We first note that $\hat{\OO} \subseteq \OO$ by Proposition~\ref{prop:OI}.
Suppose $H$ satisfies \eqref{eq:II}. Then so does  $H + \JJ_p(\OO)$ by the previous remark.
Now $p (H + \hat{\OO})^2 \subseteq p\OO \subseteq H + \hat{\OO} \subseteq \OO $ and $H + \hat{\OO}$ is not contained in a proper suborder of $\OO$ since $H$ is not.
So $H + \hat{\OO}$ also satisfies \eqref{eq:II}.\\
For the converse, note that again $pH^2 \subseteq p \OO \subseteq H$. So if $H$ does not satisfy \eqref{eq:II}, it must be contained in a maximal suborder $\Lambda$ of $\OO$.
If $\OO/\Lambda$ is of type (1) or (2), we get  $H + \JJ_p(\OO) \subseteq \Lambda$. Similarly, if $\OO/\Lambda$ is of type (3), we get $H + \hat{\OO} \subseteq \Lambda$.
But this contradicts the assumption that $H + \JJ_p(\OO)$ and $H + \hat{\OO}$ both satisfy \eqref{eq:II}.
\end{proof}

\begin{lemma}\label{lem:descent2}
A lattice $H$ with $\Z + p\OO \subseteq H \subseteq \OO$ satisfies \eqref{eq:II} if and only if $H + \JJ_p(\OO)^2$ does.
\end{lemma}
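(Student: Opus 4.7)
The plan is to reduce \eqref{eq:II} to a single nontrivial condition. For any $\Z$-lattice $H$ with $\Z + p\OO \subseteq H \subseteq \OO$, the requirement $1 \in H$ follows from $\Z \subseteq H$, and $pH^2 \subseteq p\OO \subseteq H$ follows from $H^2 \subseteq \OO^2 = \OO$. Hence \eqref{eq:II} collapses to the single requirement that $H^r = \OO$ for some $r \ge 1$; since $1 \in H$ forces the ascending chain $H \subseteq H^2 \subseteq \cdots$ to stabilize at the subring $\Z[H]$ of $\OO$, this condition is in turn equivalent to $H$ not being contained in any maximal suborder of $\OO$. Setting $H' := H + \JJ_p(\OO)^2$, the same reduction applies to $H'$ since $\Z + p\OO \subseteq H \subseteq H' \subseteq \OO$.

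The forward direction is then immediate: $H \subseteq H'$ yields $H^r \subseteq H'^r$, so $H^r = \OO$ implies $H'^r = \OO$.

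For the converse I would argue by contrapositive. Assume $H$ is contained in some maximal suborder $\Lambda \subsetneq \OO$; the goal is to show $H' \subseteq \Lambda$. Here Corollary~\ref{cor:JLambda} does all the work: if $\OO/\Lambda$ has type (1) or (2), then $\JJ_p(\OO) \subseteq \Lambda$ and hence $\JJ_p(\OO)^2 \subseteq \Lambda$; if it has type (3), then $\JJ_p(\OO)^2 + p\OO \subseteq \Lambda$ and in particular $\JJ_p(\OO)^2 \subseteq \Lambda$. In either situation $H' = H + \JJ_p(\OO)^2 \subseteq \Lambda$, which contradicts the hypothesis that $H'$ satisfies \eqref{eq:II}.

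There is no real obstacle; the crucial observation is that within the range $\Z + p\OO \subseteq H \subseteq \OO$ the multiplicative condition $pH^2 \subseteq H$ is automatic, so the whole matter reduces to whether $H$ generates $\OO$ as a subring, and Corollary~\ref{cor:JLambda} pins this down uniformly across all three types of maximal suborders.
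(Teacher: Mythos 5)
Your proof is correct and takes essentially the same route as the paper: observe that $1 \in H$ and $pH^2 \subseteq p\OO \subseteq H$ hold automatically in the given range, so \eqref{eq:II} reduces to whether $H$ is contained in a maximal suborder of $\OO$, and Corollary~\ref{cor:JLambda} gives $\JJ_p(\OO)^2 \subseteq \Lambda$ for every maximal suborder $\Lambda$, uniformly across the three types. The paper's proof is a compressed version of exactly this argument.
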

\begin{proof}
Note that $pH^2 \subseteq p\OO \subseteq H$. So if $H$ does not satisfy \eqref{eq:II}, it is contained in a maximal suborder $\Lambda$ of $\OO$.
But $\JJ_p(\OO)^2 \subseteq \Lambda$ by Corollary~\ref{cor:JLambda}. Thus $H + \JJ_p(\OO)^2 \subseteq \Lambda$ does not satisfy \eqref{eq:II}.
\end{proof}

\begin{lemma}\label{lem:descent}
Let $\tilde{H}$ be a lattice with $\Z + p^i \OO \subseteq \tilde{H}$ that satisfies \eqref{eq:II} for some $i \ge 1$.
Let $H$ be a lattice with $ \Z + p^{i+1}\OO \subseteq H \subseteq \tilde{H}$ and $\tilde{H} = H + p^i \OO$.
Then $H$ satisfies \eqref{eq:II} if and only if $p\tilde{H}^2 \subseteq H$.
\end{lemma}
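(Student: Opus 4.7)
My plan is to verify the three components of \eqref{eq:II}---namely $1 \in H \subseteq \OO$, the closure condition $pH^2 \subseteq H$, and the primitivity $H^r = \OO$ for some $r \ge 1$---under each side of the claimed equivalence. The containments $1 \in H \subseteq \OO$ are immediate from $\Z + p^{i+1}\OO \subseteq H \subseteq \tilde{H} \subseteq \OO$, so the real work concerns only the other two.

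The key preparatory computation will be to expand $\tilde{H}^2$. Since $1 \in H \subseteq \OO$ I have $H\OO = \OO$, and since $i \ge 1$ I have $p^{2i}\OO \subseteq p^i \OO$; together these give
\[
\tilde{H}^2 = (H + p^i\OO)^2 = H^2 + p^i H\OO + p^{2i}\OO^2 = H^2 + p^i\OO,
\]
so that $p\tilde{H}^2 = pH^2 + p^{i+1}\OO$. From this identity, the forward direction of the lemma is immediate: if $H$ satisfies \eqref{eq:II}, then $pH^2 \subseteq H$ by assumption and $p^{i+1}\OO \subseteq H$ by hypothesis, hence $p\tilde{H}^2 \subseteq H$.

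For the converse I would assume $p\tilde{H}^2 \subseteq H$. Then $pH^2 \subseteq p\tilde{H}^2 \subseteq H$ handles the closure condition, and what remains is to produce some $r$ with $H^r = \OO$. This is where I would invoke Remark~\ref{rem:power}: its hypotheses $H\OO = \OO$, $[\OO : H]$ a $p$-power (from $p^{i+1}\OO \subseteq H$), and $\fraka := p^i\OO \subseteq p\OO$ (using $i \ge 1$) are all met, so the remark yields
\[
H^r = \OO \;\Longleftrightarrow\; (H + p^i \OO)^r = \tilde{H}^r = \OO.
\]
The right-hand side holds for some $r$ by the assumption that $\tilde{H}$ satisfies \eqref{eq:II}, and the conclusion follows.

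The only non-formal ingredient is the application of Remark~\ref{rem:power}; once the identity $\tilde{H}^2 = H^2 + p^i\OO$ is in hand, both directions reduce to routine bookkeeping with sums of lattices, so I do not anticipate any substantive obstacle.
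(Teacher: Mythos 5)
Your proof is correct and follows essentially the same route as the paper's: you establish $\tilde H^2 = H^2 + p^i\OO$ (the paper derives this from Remark~\ref{rem:power} while you expand the square directly, which is equivalent), read off the forward direction from $pH^2 + p^{i+1}\OO \subseteq H$, and for the converse deduce $pH^2 \subseteq H$ from monotonicity and then invoke Remark~\ref{rem:power}'s Nakayama argument to transfer $\tilde H^r = \OO$ to $H^r = \OO$.
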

\begin{proof}
By  Remark~\ref{rem:power} we have $p\tilde{H}^2 = p(H^2 + p^i \OO) = pH^2 + p^{i+1} \OO$.
Hence if $H$ satisfies \eqref{eq:II}, then the right hand side lies in $H$.
Conversely, suppose $p\tilde{H}^2 \subseteq H$. Then $1 \in H \subseteq \OO$ and $\tilde{H}^r = \OO$ implies $H^r = \OO$ by Remark~\ref{rem:power}.
Finally, the inclusion $H \subseteq \tilde{H}$ yields $pH^2 \subseteq p \tilde{H}^2 \subseteq H$.
\end{proof}

In the situation of Lemma~\ref{lem:descent} we have $p^{i+1} \OO \subseteq p \tilde{H} \subseteq p \tilde{H}^2$.
Hence the lattices $H$ with $ \Z + p^{i+1}\OO \subseteq H \subseteq \tilde{H}$ and $\tilde{H} = H + p^i \OO$ are precisely the
lattices between $\Z + p\tilde{H}^2$ and $\tilde{H}$ such that $\tilde{H} = H + p^i \OO$.

Suppose $H$ is an (unknown) lattice in $K$ that satisfies \eqref{eq:II}.
Then $r \le n-1$ and thus $p^{n-2} \OO \subseteq H$ by Remark~\ref{rem:pOIncl}.
We are going to iteratively enumerate the lattices
\[ H + \JJ_p(\OO),\; H + p\OO + \JJ_p(\OO)^2,\; H + p\OO,\; H + p^2 \OO,\; \ldots ,\; H + p^{n-2} \OO = H \:,\]
which all satisfy \eqref{eq:II}. To this end, we start with two auxiliary functions.

\begin{alg}\label{alg:complement}\mbox{}\\
\begin{algorithm}[H]
\DontPrintSemicolon
\KwIn{A subspace $U$ of a finite dimensional vector space $V$ over $\F_q$.}
\KwOut{A list $\calL$ of all subspaces $S$ of $V$ such that $V = U+S$.}
Fix a basis $(w_1,\dots,w_r)$ of some complement $W$ of $U$ in $V$ and set $\calL = ()$.\;
\ForEach{subspace $U' \le U$}{
Fix a complement $T$ of $U'$ in $U$.\;
Append to $\calL$ all subspaces $\langle w_1 + t_1,\dots, w_{r} + t_{r} \rangle \oplus U'$ where $t \in T^r$.
}
\Return{$\calL$}
\end{algorithm}
\end{alg}

\begin{proof}
We first note that the sum in line 5 of the algorithm is indeed direct by the choice of $W$ and $T$.
Let $\varphi \colon V = W \oplus U \to W$ be the projection onto $W$.
A subspace $S$ of $V$ satisfies $U+S = V$ if and only if $\varphi(S) = W$.
In particular, every space $S$ in $\calL$ satisfies $U+S = V$.
Let $S$ be any subspace of $V$ with $U+S = V$.
Then $S$ uniquely determines $U':= U \cap S$ which is the kernel of $\varphi$ restricted to $S$.
Hence $\dim(S) = \dim(W) + \dim(U')$.
We fix bases $(b_1,\dots,b_s)$ of $U'$ as well as $(b_{s+1},\dots,b_t)$ of the complement $T$ of $U'$ in $U$.
Then $B:= (w_1,\dots,w_{r}, b_1,\dots,b_t)$ is a basis of $V$.
The space $S$ has a unique basis whose coordinate matrix with respect to $B$ is in reduced echelon form.
Since $\varphi(S) = W$ and $U' \subseteq S$, this matrix looks like
\[
\begin{pmatrix}
 I_r & 0     & * \\
 0   & I_{s} & 0
\end{pmatrix} \in \F_q^{(r+s) \times (r+t)} \:.
\]
So there exist uniquely determined $t_1,\dots,t_r \in T$ such that $S = \langle w_1 + t_1,\dots, w_{r} + t_{r} \rangle \oplus U'$.
Hence the list $\calL$ contains $S$ exactly once.
\end{proof}

\begin{corollary}
Let $V$ be an $n$-dimensional vector space over $\F_q$ and let $U$ an $m$-dimensional subspace.
The number of subspaces $S$ of $V$ with $U+S = V$ is
\[ \sum_{k=0}^m \binom{m}{k}_q \cdot q^{(n-m)k} \]
where the Gaussian binomial coefficient $\binom{m}{k}_q$ denotes the number of $k$-dimensional subspaces of an $m$-dimensional space over $\F_q$.
\end{corollary}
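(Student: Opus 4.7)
The plan is to read the count directly off Algorithm~\ref{alg:complement}, whose correctness proof already shows that it lists each subspace $S$ with $U+S=V$ exactly once. So it suffices to tally the outputs of the algorithm.

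Using the notation of the algorithm, fix a complement $W$ of $U$ in $V$, and set $r := \dim(W) = n-m$. For a subspace $U' \le U$ of dimension $s$, choose a complement $T$ of $U'$ in $U$, so $\dim(T) = m-s$ and $|T| = q^{m-s}$. For each such $U'$ the algorithm produces exactly one subspace $S = \langle w_1+t_1,\dots,w_r+t_r\rangle \oplus U'$ for each tuple $(t_1,\dots,t_r) \in T^r$, contributing $q^{(m-s)(n-m)}$ subspaces $S$ with $U \cap S = U'$ and $U+S=V$. Since the number of subspaces $U' \le U$ of dimension $s$ is $\binom{m}{s}_q$, summing over $s$ yields
\[
\#\{S \le V \mid U+S = V\} \;=\; \sum_{s=0}^m \binom{m}{s}_q \, q^{(m-s)(n-m)}.
\]

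Finally, the substitution $k = m-s$ together with the symmetry $\binom{m}{s}_q = \binom{m}{m-s}_q$ of the Gaussian binomial coefficient transforms this into
\[
\sum_{k=0}^m \binom{m}{k}_q \, q^{(n-m)k},
\]
as claimed. There is no substantive obstacle here: the only nontrivial ingredient, namely that distinct pairs $(U', (t_1,\dots,t_r))$ produce distinct complements $S$, is exactly the uniqueness statement established in the correctness proof of Algorithm~\ref{alg:complement} via the reduced echelon form argument.
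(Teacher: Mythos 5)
Your proof is correct and follows the same route as the paper: both read off the count from Algorithm~\ref{alg:complement} by summing over the possible intersections $U' = U \cap S$, with $q^{(n-m)\dim T}$ choices of $S$ for each. The only cosmetic difference is that you index by $s=\dim U'$ and then substitute $k=m-s$ using Gaussian-binomial symmetry, whereas the paper indexes directly by $k$ with $\dim U' = m-k$.
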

\begin{proof}
In the situation of Algorithm~\ref{alg:complement}, the space $W$ has rank $r = n-m$.
For $0 \le k \le m$, there are $\binom{m}{k}_q$ different choices for a subspace $U'$ of $U$ of rank $m-k$.
Each yields $q^{rk}$ different solutions $S$. The result follows.
\end{proof}

\begin{alg}\label{alg:proj}\mbox{}\\
\begin{algorithm}[H]
\DontPrintSemicolon
\KwIn{A finite dimensional vector space $V = V_1 \oplus V_2$ over $\F_q$.}
\KwOut{A list $\calL$ of all subspaces $S$ of $V$ such that $\pi_i(S) = V_i$ for $i=1,2$ where $\pi_i \colon V \to V_i$ denotes the canonical projection.}
If $\dim(V_1) < \dim(V_2)$, then swap $V_1$ and $V_2$.\;
Let $(b_1,\dots,b_r)$ and $(b_{r+1},\dots,b_{r+s})$ be bases of $V_1$ and $V_2$ respectively.\;
Set $B = (b_1,\dots,b_{r+s})$ and $\calL = ()$.\;
\ForEach{$0 \le k \le s$}{
Let $\calE$ be the set of $k \times s$-matrices over $\F_q$ in reduced row echelon form.\;
Let $\calM$ be the set of all $r \times (s-k)$-matrices over $\F_q$ of rank $s-k$.\;
\ForEach{$M \in \calM$ and $E \in \calE$}{
For $1 \le i \le s$ insert a zero column into $M$ at position $i$ if the $i$-th column of $E$ is a pivot column.\;
Let $C \in V^{r+k}$ be the tuple whose coefficients with respect to $B$ are given by the rows of the matrix $\left( \begin{smallmatrix} I_r & M \\ 0 & E \end{smallmatrix} \right)$.\;
Append $\langle C \rangle$ to $\calL$.\;
}
}
\Return{$\calL$}.
\end{algorithm}
\end{alg}
\begin{proof}
Let $S$ be a subspace $S$ of $V$. Then $S$ has a unique basis $C$ such that the coefficient matrix $M$ of $C$ with respect to $B$ is in reduced echelon form.
Then $\pi_i(S) = V_i$ for $i=1,2$ holds if and only if the two submatrices of $M$ consisting of the first $r$ and last $s$ columns have rank $r$ and $s$ respectively.
This happens if and only if $M$ has the shape as in line 8.
\end{proof}

\begin{corollary}
In the situation of Algorithm~\ref{alg:proj}, the number of subspaces $S$ is
\[ \sum_{k=0}^s \prod_{i=0}^{s-k-1} \frac{(q^r-q^i)(q^s-q^i)}{q^{s-k} - q^i} \]
where $\{r,s\} = \{ \dim(U_1), \dim(U_2) \}$ and $r \ge s$.
\end{corollary}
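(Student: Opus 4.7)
The plan is to apply the correctness of Algorithm~\ref{alg:proj} (which was just established in the preceding proof) to turn the counting problem into counting the matrix pairs $(E,M)$ used by the algorithm. By the bijection underlying that algorithm, every subspace $S$ of the desired shape arises from a unique pair $(E,M) \in \calE \times \calM$ for a unique value of $k$, so the total count is $\sum_{k=0}^{s} |\calE|\,|\calM|$, and the task reduces to pinning down these two cardinalities for each $k$.

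First I would count $|\calM|$: these are the $r \times (s-k)$ matrices over $\F_q$ of rank $s-k$, equivalently ordered tuples of $s-k$ linearly independent vectors in $\F_q^r$. A standard count yields $|\calM| = \prod_{i=0}^{s-k-1}(q^r - q^i)$. Next, for $|\calE|$, I would use the classical bijection between $k \times s$ matrices in reduced row echelon form with no zero rows and $k$-dimensional subspaces of $\F_q^s$ (each such matrix is the unique basis of its row space in RREF); this gives $|\calE| = \binom{s}{k}_q$.

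The final algebraic step is to identify
\[
\binom{s}{k}_q = \binom{s}{s-k}_q = \prod_{i=0}^{s-k-1} \frac{q^{s}-q^{i}}{q^{s-k}-q^{i}},
\]
so that
\[
|\calE|\cdot|\calM| = \prod_{i=0}^{s-k-1} \frac{(q^{r}-q^{i})(q^{s}-q^{i})}{q^{s-k}-q^{i}}.
\]
Summing over $k \in \{0,\dots,s\}$ gives the claimed formula.

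No step is genuinely hard; the only mild care needed is to justify that $\calE$ is indexed by $k$-dimensional subspaces of $\F_q^s$ (no zero rows appear because the matrices in $\calE$ have rank $k$ by construction) and to perform the reindexing $\binom{s}{k}_q = \binom{s}{s-k}_q$ so that the product range matches the $s-k$ factors coming from $|\calM|$. Everything else is bookkeeping from the algorithm's specification.
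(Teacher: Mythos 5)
Your proof is correct and follows essentially the same route as the paper: identify $|\calE|$ with the Gaussian binomial coefficient $\binom{s}{k}_q = \binom{s}{s-k}_q$, count $|\calM| = \prod_{i=0}^{s-k-1}(q^r - q^i)$ as tuples of linearly independent vectors, multiply, and sum over $k$. The paper's proof is the same argument stated slightly more tersely; your added remark that the RREF matrices in $\calE$ must have no zero rows (full rank $k$) for the Gaussian binomial count to apply is a worthwhile clarification that the paper leaves implicit.
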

\begin{proof}
In each iteration, the set $\calE$ is in bijection with the number of $k$-dimensional subspaces of an $s$-dimensional space over $\F_q$.
Hence $\# \calE = \binom{s}{k}_q = \binom{s}{s-k}_q$. From $\# \calM = \prod_{i=0}^{s-k-1} (q^r-q^i)$ it follows that we find $\#\calM \cdot \#\calE = \prod_{i=0}^{s-k-1} \frac{(q^r-q^i)(q^s-q^i)}{q^{s-k} - q^i}$ spaces in each iteration.
\end{proof}

Computing the lattices $H$ with $\JJ_p(\OO) \subseteq H$ satisfying \eqref{eq:II} is now straightforward:

\begin{alg}\label{alg:start1}\mbox{}\\
\begin{algorithm}[H]
\DontPrintSemicolon
\KwIn{A prime $p$ and orders $\OO_1,\ldots,\OO_s$ in $K$ having the same $p$-radical $J$.}
\KwOut{For $1 \le i \le s$ a list $\calL_i$ of all lattices between $\Z+J$ and $\OO_i$ that satisfy \eqref{eq:II} for the order $\OO_i$.}
Let $\OO$ be the smallest order containing $\OO_1,\dots, \OO_s$.\;
Initialize $\calL_i = ()$ for all $i$.\;
Find the smallest integer $k \ge 1$ such that $2^k \ge n-1$.\;
\ForEach{lattice $L$ between $\Z+J$ and $\OO$}{
Decide if $L^{2^k} = \OO_i$ for some $1 \le i \le s$.\;
If $i$ exists, append $L$ to $\calL_i$.\;
}
\Return $\calL_1,\dots,\calL_s$.
\end{algorithm}
\end{alg}

\begin{remark}\label{rem:eff1}
Note that using basis matrices in Hermite normal form, finding the index $i$ in step~5 of Algorithm~\ref{alg:start1} is just a lookup.
So the only time consuming part of the algorithm is computing the power $L^{2^k}$ using repeated squaring.
It is also worthwhile to mention that if we want to compute all orders of a given index in $\Z_K$, we set $\{ \OO_1,\dots, \OO_s\}$ to be the set of all orders with $p$-radical $J$.
Then the above algorithm always finds an index $i$, so it never considers lattices $L$ that have to be dismissed.
\end{remark}

Let $\hat{\OO}$ be the largest order in $K$ with $p$-radical $p\OO + \JJ_p(\OO)^2$, which can be computed using Algorithm~\ref{alg:OrderRadical}.
Next we need to enumerate the lattices $L$ satisfying \eqref{eq:II} and containing $\hat{\OO}$.
Of course, we could proceed as in Algorithm~\ref{alg:start1}.
The following algorithm however, does not need to compute the costly powers $L^{2^k}$.

\begin{alg}\label{alg:start2}\mbox{}\\
\begin{algorithm}[H]
\DontPrintSemicolon
\KwIn{An order $\OO$ in $K$ and a prime $p$ in $K$.}
\KwOut{A list $\calL$ of all lattices satisfying \eqref{eq:II} and containing $\hat{\OO}$.}
Set $\calL = ()$.\;
\ForEach{lattice $L$ between $p\OO + \JJ_p(\OO)^2$ and $\JJ_p(\OO)$}{
If $L\OO = \JJ_p(\OO)$, then append $L + \hat{\OO}$ to $\calL$.\;
}
\Return $\calL$.
\end{algorithm}
\end{alg}
\begin{proof}
From $\JJ_p(\OO) \cap \hat{\OO} = \JJ_p(\hat{\OO}) = p\OO + \JJ_p(\OO)^2$ it follows that any lattice between $\hat{\OO}$ and $\OO$ is of the form $L + \hat{\OO}$ for some uniquely determined lattice $L$ between $p\OO + \JJ_p(\OO)^2$ and $\JJ_p(\OO)$.
To show the correctness of the algorithm, we need to show that such a sum $L + \hat{\OO}$ satisfies \eqref{eq:II} if and only if $L\OO = \JJ_p(\OO)$.\\
Suppose first that $L + \hat{\OO}$ does not satisfy \eqref{eq:II}.
Then it is contained in a maximal suborder $\Gamma$ of $\OO$ with $\hat{\OO} \subseteq \Gamma$.
Since the prime ideals of $\OO$ and $\hat{\OO}$ over $p$ have isomorphic residue class fields, the extension $\OO/\Gamma$ must be of type 3.
Thus $L = (L + \hat{\OO}) \cap \JJ_p(\OO) \subseteq \JJ_p(\Gamma)$. By Lemma~\ref{min3}, $\JJ_p(\Gamma) \subsetneq \JJ_p(\OO)$ is an ideal of $\OO$ and thus $L\OO \ne \JJ_p(\OO)$.\\
Suppose now $L\OO \ne \JJ_p(\OO)$.
Then there exists a maximal $\OO$-submodule $J$ of $\JJ_p(\OO)$ with $L \subseteq J$.
Lemma~\ref{min3} then shows that $L + \hat{\OO}$ is contained in the order $J + \hat{\OO}$. Hence $L + \hat{\OO}$ does not satisfy \eqref{eq:II}.
\end{proof}

\begin{remark}\label{rem:eff2}
Let $b_1,\dots,b_k \in \OO$ such that their images in the $\F_p$-space $\OO/\JJ_p(\OO)$ form a basis.
Let $L$ be as in line 2 of Algorithm~\ref{alg:start2}.
Then $L \JJ_p(\OO) \subseteq L$ and thus 
\[ L \OO = \JJ_p(\OO) \iff L + \sum_{i=1}^k L b_i = \JJ_p(\OO) \:. \]
To check the latter condition, we only need to know the products of $b_1,\dots,b_k$ with the basis elements of the $\F_p$-space $V:= \JJ_p(\OO)/(p\OO + \JJ_p(\OO)^2)$.
If these are precomputed, the check $L \OO = \JJ_p(\OO)$ can be done using linear algebra in $V$.

In particular, if $\OO = \Z + \JJ_p(\OO)$, then $\OO$ is the only lattice $H$ over $\hat{\OO}$ that satisfies \eqref{eq:II}.
So in this case, the costly search can be avoided completely.
\end{remark}

We can now give the algorithm to compute all lattices satisfying \eqref{eq:II}.

\begin{alg}\label{alg:unram}\mbox{}\\
\begin{algorithm}[H]
\DontPrintSemicolon
\KwIn{An order $\OO$ in $K$ and a prime $p$.}
\KwOut{A list $\calL$ of lattices in $K$ satisfying \eqref{eq:II}.}
Let $\hat{\OO}$ be the largest order in $K$ with $p$-radical $p\OO + \JJ_p(\OO)^2$.\;
Compute the set $\calL'_1$ of lattices $H \subseteq \JJ_p(\OO)$ satisfying \eqref{eq:II}, cf. Alg.~\ref{alg:start1}.\;
Compute the set $\calL'_2$ of lattices $H \subseteq \hat{\OO}$ satisfying \eqref{eq:II}, cf. Alg.~\ref{alg:start2}.\;
Set $\calL' = ()$ and consider the $\F_p$-space $V = \OO / (\Z + p\OO + \JJ_p(\OO)^2)$.\;
Let $\varphi \colon V \to H$ be the canonical epimorphism.\;
\ForEach{$L_1 \in \calL'_1$ and $L_2 \in \calL'_2$}{
  Let $U_1 = \varphi(L_2 \cap \JJ_p(\OO))$ and $U_2 = \varphi(L_1 \cap \hat{\OO})$.\;
  Let $\pi_i \colon U_1 \oplus U_2 \to U_i$ be the canonical projections.\;
  Compute the set $\calS$ of all subspaces $S$ of $U_1 \oplus U_2$ with $\pi_i(S) = U_i$ for $i=1,2$ using Algorithm~\ref{alg:proj}.\;
  For $S \in \calS$ append $\varphi^{-1}(S)$ to $\calL'$.\;
}
Set $\calL_1 = ()$ and consider the $\F_p$-space $V = \OO / (\Z + p\OO)$.\;
Let $\varphi \colon V \to H$ be the canonical epimorphism and set $U = \varphi(p\OO + \JJ_p(\OO)^2)$.\;
\ForEach{$L \in \calL'$}{
  Compute the set $\calS$ of subspaces $S$ of $\varphi(L)$ with $S + U = \varphi(L)$ using Algorithm~\ref{alg:complement}.\;
  For $S \in \calS$ append $\varphi^{-1}(S)$ to $\calL_{1}$.\;
}
\ForEach{$1 \le i < n-2$}{
  Set $\calL_{i+1} = ()$.\;
  \ForEach{lattice $H$ in $\calL_i$}{
    Consider the $\F_p$-space $V = H / (\Z+pH^2)$.\;
    Let $\varphi \colon H \to V$ be the canonical epimorphism and set $U = \varphi(p^i\OO)$.\;
    Compute the set $\calS$ of proper subspaces $S$ of $V$ with $S + U = V$ using Algorithm~\ref{alg:complement}.\;
    For $S \in \calS$ append $\varphi^{-1}(S)$ to $\calL_{i+1}$.\;
}
}
\Return the concatenation of $\calL_1,\dots,\calL_{n-2}$.\;
\end{algorithm}
\end{alg}
\begin{proof}
Lemma~\ref{lem:descent1} shows that in line 11, the list $\calL'$ contains all lattices that satisfy \eqref{eq:II} and contain $p\OO + \JJ_p(\OO)^2$.
By Lemma~\ref{lem:descent2} in line 16, the list $\calL_1$ contains all lattices that satisfy \eqref{eq:II} and contain $p\OO$.
Thus Lemma~\ref{lem:descent} shows that for $2 \le i \le n-2$, the list $\calL_i$  contains all lattices that satisfy \eqref{eq:II} and contain $p^i\OO$ but not $p^{i-1}\OO$.
Now if $H$ is any lattice satisfying \eqref{eq:II}, then $\OO = H^{n-1}$ and thus $p^{n-2}\OO \subseteq H$ by Remark~\ref{rem:pOIncl}.
In particular, $H$ is contained in exactly one of the lists $\calL_1,\dots,\calL_{n-2}$.
\end{proof}

\section{The ramified case}\label{sec:ram}

Suppose $K$ is ramified at the prime $p$.
As in the beginning of Section 3, we need the full lattices $I$ such that $p\Z + I^2 \subseteq I \subseteq \JJ_p(\Z_K)$ and $I^r$ an invertible ideal of a fixed order $\OO$ for some $1 \le r < n$.
If $I$ is such a lattice, Remark~\ref{unram:I} shows that the ideal $\fraka := I\OO$ is invertible and thus locally principal.
Hence there exists some $x \in \OO_p$ with $I \OO_p = \fraka_p = x \OO_p$. 

\begin{remark}
Let $I \subseteq \OO$ be a full lattice and let $\fraka := I \OO$. For any ideal $\frakb$ of $\OO$ we have
\[ (I + \fraka\frakb)^r = I^r + \sum_{i=1}^{r} I^{r-i} \fraka^i\frakb^{i} = I^r + \fraka^r \sum_{i=1}^{r} \frakb^{i} = I^r + \fraka^{r} \frakb \:. \]
Suppose now $[\OO:I]$ is a power of a prime $p$ and $\frakb \subseteq p\OO$.
Then Nakayama's Lemma for $\Z_p$-modules shows that $I^r = \fraka^r$ if and only if $(I + \fraka \frakb)^r = \fraka^r$.
\end{remark}

As a first step, we need a list of all invertible ideals of $\OO$ between $p\OO$ and $\JJ_p(\OO)$.
To this end, we employ Fr\"{o}hlich's invertibility criterion.

\begin{theorem}[Fr\"ohlich]
An  ideal $\fraka$ of $\OO$ is invertible if and only if $[\Z_K \fraka : \fraka] = [\Z_K : \OO]$ or equivalently $[ \Z_K : \Z_K \fraka ] = [ \OO : \fraka ]$.
\end{theorem}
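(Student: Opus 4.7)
First, since $\Z_K/\OO$ and $\Z_K\fraka/\fraka$ are finite $\Z$-modules, their cardinalities factor over primes $p$ into the $\Z_p$-indices $[\Z_{K,p}:\OO_p]$ and $[\Z_{K,p}\fraka_p:\fraka_p]$ of the completions. Invertibility of $\fraka$ over $\OO$ is also a local condition: it is equivalent to invertibility of $\fraka_p$ over $\OO_p$ for every prime $p$. Hence the plan is to prove the statement one prime at a time, working with the $\Z_p$-orders $\OO_p\subseteq\Z_{K,p}$ inside $K_p:=K\otimes_\Q\Q_p$.

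Next, I would observe that the two identities in the theorem are equivalent. After scaling $\fraka$ by a suitable power of $p$ (which changes neither indices nor invertibility), we may assume $\fraka_p\subseteq\OO_p$. The chains $\fraka_p\subseteq\OO_p\subseteq\Z_{K,p}$ and $\fraka_p\subseteq\Z_{K,p}\fraka_p\subseteq\Z_{K,p}$ both compute $[\Z_{K,p}:\fraka_p]$, so multiplicativity of finite $\Z_p$-indices yields
\[ [\Z_{K,p}:\OO_p]\cdot[\OO_p:\fraka_p] \;=\; [\Z_{K,p}:\Z_{K,p}\fraka_p]\cdot[\Z_{K,p}\fraka_p:\fraka_p], \]
so $[\Z_{K,p}\fraka_p:\fraka_p]=[\Z_{K,p}:\OO_p]$ is equivalent to $[\Z_{K,p}:\Z_{K,p}\fraka_p]=[\OO_p:\fraka_p]$.

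For the ``only if'' direction, every maximal ideal of $\OO_p$ lies over $p$, so $\OO_p$ is semi-local and any invertible $\OO_p$-module is free of rank one. Hence $\fraka_p=x\OO_p$ for some $x\in K_p^*$, giving $\Z_{K,p}\fraka_p=x\Z_{K,p}$, and multiplication by $x$ restricts to a $\Z_p$-isomorphism $\Z_{K,p}/\OO_p\isom\Z_{K,p}\fraka_p/\fraka_p$, which delivers the index equality.

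The converse is the main obstacle. Assume the index equality. Since $\Z_{K,p}$ is a product of complete DVRs, the $\Z_{K,p}$-ideal $\Z_{K,p}\fraka_p$ is principal, say $\Z_{K,p}\fraka_p=x\Z_{K,p}$. Set $M:=x^{-1}\fraka_p\subseteq\Z_{K,p}$; this is an $\OO_p$-submodule satisfying $\Z_{K,p}M=\Z_{K,p}$ and $[\Z_{K,p}:M]=[\Z_{K,p}:\OO_p]$. It suffices to show $M=u\OO_p$ for some unit $u\in\Z_{K,p}^\times$, because then $\fraka_p=(xu)\OO_p$ is principal, hence invertible. To do so, I would localize at each maximal ideal $\frakp$ of $\OO_p$: the index hypothesis descends to a $\Z_p$-length identity for the corresponding localized quotients, and Nakayama's lemma applied to the finitely generated $(\OO_p)_\frakp$-module $M_\frakp$, combined with $\Z_{K,p}M=\Z_{K,p}$, forces $M_\frakp$ to be generated by a single element over $(\OO_p)_\frakp$. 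A Chinese Remainder argument across the finitely many maximal ideals of $\Z_{K,p}$ above $\frakp$ then assembles these local generators into the desired global unit $u$. Extracting local principality from the length identity via Nakayama is the key technical step, and is where I expect the real difficulty of the proof to lie.
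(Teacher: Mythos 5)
The paper does not prove Fr\"ohlich's theorem; it simply cites \cite[Theorem~4]{Froehlich}. So there is no ``paper proof'' to compare against line by line, only the question of whether your outline actually constitutes a proof. It does not.

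Your treatment of the easy direction is fine: localize at $p$, note the two index conditions are equivalent by multiplicativity of indices in the chain $\fraka_p\subseteq\OO_p\subseteq\Z_{K,p}$ versus $\fraka_p\subseteq\Z_{K,p}\fraka_p\subseteq\Z_{K,p}$, and use that an invertible ideal over the semi-local ring $\OO_p$ is principal, so multiplication by a generator carries $\Z_{K,p}/\OO_p$ isomorphically onto $\Z_{K,p}\fraka_p/\fraka_p$. All of that is correct and standard.

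The converse is the theorem, and there your argument has two genuine holes, both of which you flag but neither of which you fill. First, you assert that ``the index hypothesis descends to a $\Z_p$-length identity for the corresponding localized quotients'' at each maximal ideal $\frakp$ of $\OO_p$. That does not follow from equality of total lengths: an overall equality $\sum_\frakp \ell_\frakp(\Z_{K,p}/M)=\sum_\frakp \ell_\frakp(\Z_{K,p}/\OO_p)$ does not force the summands to match termwise. To get termwise equality you would first need the one-sided bound $\ell_\frakp(\Z_{K,p}/M)\le \ell_\frakp(\Z_{K,p}/\OO_p)$ for every $\frakp$ whenever $\Z_{K,p}M=\Z_{K,p}$, and that inequality is precisely the local content of Fr\"ohlich's theorem -- you have moved the difficulty, not removed it. Second, even granting the local length identity, ``Nakayama plus a Chinese Remainder argument assembles local generators into the desired global unit'' is not a proof but a hope, and it is a hope that has to contend with a real obstruction: when the residue field $\OO_p/\frakp$ is small and several primes of $\Z_{K,p}$ lie above $\frakp$, a sublattice $H\subseteq\OO_p$ with $\OO_p H=\OO_p$ need not contain a unit of $\OO_p^*$. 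This is exactly the phenomenon the paper isolates in Section~\ref{sec:ram} as a counterexample to \cite[Lemma~3.3]{FHK}. Your length hypothesis presumably rules such $H$ out, but you have not said why, and that is the entire crux. As written, the converse direction of your proposal is an outline of where a proof would have to go, not an argument.
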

\begin{proof}
See \cite[Theorem 4]{Froehlich}.
\end{proof}

Fr\"ohlich's result can be used to relate the invertible ideals of $\OO$ to the invertible ideals of any overorder $\Lambda$.

\begin{corollary}\label{cor:Froehlich}
Let $\OO \subseteq \Lambda$ be orders.
Then an ideal $\fraka$ of $\OO$ is invertible if and only if $\Lambda \fraka$ is an invertible ideal of $\Lambda$ and $[\Lambda : \Lambda \fraka] = [\OO : \fraka]$.
\end{corollary}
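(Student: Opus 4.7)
The plan is to apply Fröhlich's criterion (the theorem just above) to $\OO$ and to $\Lambda$ in parallel, exploiting the trivial identity $\Z_K(\Lambda\fraka) = \Z_K\fraka$. In its second form, Fröhlich says $\fraka$ is invertible in $\OO$ iff $[\Z_K : \Z_K\fraka] = [\OO : \fraka]$, while applied to $\Lambda$ it says $\Lambda\fraka$ is invertible in $\Lambda$ iff $[\Z_K : \Z_K\fraka] = [\Lambda : \Lambda\fraka]$. Thus both the hypothesis and the conclusion of the corollary funnel through the single quantity $[\Z_K : \Z_K\fraka]$.

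For the direction ($\Leftarrow$), I would assume $\Lambda\fraka$ invertible in $\Lambda$ and $[\Lambda : \Lambda\fraka] = [\OO : \fraka]$. Fröhlich applied to $\Lambda$ yields $[\Z_K : \Z_K\fraka] = [\Lambda : \Lambda\fraka] = [\OO : \fraka]$, and Fröhlich applied to $\OO$ then gives that $\fraka$ is invertible in $\OO$. This half is pure bookkeeping.

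For ($\Rightarrow$), it is enough to show that invertibility of $\fraka$ in $\OO$ already forces the index identity $[\Lambda : \Lambda\fraka] = [\OO : \fraka]$; the invertibility of $\Lambda\fraka$ then drops out of Fröhlich applied to $\Lambda$ combined with the hypothesis on $\fraka$. To prove the index identity, I would pass to completions at each prime $p$: invertibility of $\fraka$ means $\fraka\OO_p = x\OO_p$ for some $x \in K^*$, whence $(\Lambda\fraka)_p = \Lambda_p\fraka_p = x\Lambda_p$ is also principal. Since multiplication by $x$ is a $\Z_p$-module automorphism of $K$, it restricts to $\Z_p$-module isomorphisms $\OO_p \isom x\OO_p$ and $\Lambda_p \isom x\Lambda_p$; the tower $x\OO_p \subseteq x\Lambda_p \subseteq \Lambda_p$ therefore collapses to $[\Lambda_p : x\Lambda_p] = [\OO_p : x\OO_p]$. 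Taking the product over all primes yields the global equality $[\Lambda : \Lambda\fraka] = [\OO : \fraka]$.

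The only step requiring more than Fröhlich's criterion and index arithmetic is the local computation $[\Lambda_p : x\Lambda_p] = [\OO_p : x\OO_p]$, and even that rests merely on the fact that multiplication by a unit of $K^*$ preserves $\Z_p$-indices of full lattices. So the main (and essentially the only) obstacle is organising the two applications of Fröhlich around this local principality argument; everything else is routine.
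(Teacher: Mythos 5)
Your proof is correct and funnels through Fr\"ohlich's invariant $[\Z_K : \Z_K\fraka]$ in exactly the same way the paper does; the $(\Leftarrow)$ direction is verbatim the same. The only difference is in $(\Rightarrow)$: the paper simply invokes the standard fact that an invertible (i.e.\ locally principal) ideal remains invertible after extension to an overorder and then reads the index identity off two applications of Fr\"ohlich, whereas you re-derive everything via an explicit localization argument ($\fraka_p = x\OO_p$, $(\Lambda\fraka)_p = x\Lambda_p$, and the determinant of multiplication by $x$ is lattice-independent) and then recover invertibility of $\Lambda\fraka$ from Fr\"ohlich. This is a harmless reorganisation --- your localization step in fact already exhibits $\Lambda\fraka$ as locally principal, so the final appeal to Fr\"ohlich for invertibility of $\Lambda\fraka$ is a little redundant, but nothing in it is wrong.
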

\begin{proof}
Suppose $\fraka$ is invertible. Then $\Lambda \fraka$ is also invertible and thus Fr\"ohlich's criterion shows that $[\OO : \fraka] = [ \Z_K : \Z_K \fraka ] = [ \Lambda : \Lambda\fraka ]$.
Conversely suppose that $\Lambda \fraka$ is invertible and $[\Lambda : \Lambda \fraka] = [\OO : \fraka]$.
Again, Fr\"ohlich's criterion gives $[ \Z_K : \Z_K \fraka ] = [\Lambda : \Lambda \fraka] = [\OO : \fraka]$ and thus $\fraka$ is invertible.
\end{proof}

\begin{alg}\label{alg:invertible}\mbox{}\\
\begin{algorithm}[H]
\DontPrintSemicolon
\KwIn{A prime $p$ and orders $\OO\subseteq \Lambda$ such that all invertible ideals of $\Lambda$ over $p\Lambda$ are known.}
\KwOut{The set of invertible ideals of $\OO$ between $p\OO$ and $\JJ_p(\OO)$.}
Let $\frakp_1,\dots,\frakp_r$ be the prime ideals of $\OO$ over $p$.\;
Decompose $p\OO = \prod_{i=1}^r \frakq_i$ where $\frakq_i$ is $\frakp_i$-primary.\;
\For{$1 \le i \le r$}{
Set $\calS_i = \emptyset$.\;
\For{every invertible ideal $\frakA$ of $\Lambda$ between $\frakq_i \Lambda$ and $\frakp_i \Lambda$}{
Using the Meataxe \cite{meataxe}, enumerate the set $\calS$ of all $\OO$-submodules $\fraka$ between $\frakq_i$ and $\frakA \cap \frakp_i$ with $[\Lambda:\frakA] = [\OO : \fraka]$ and $\fraka \Lambda = \frakA$.\;
Replace $\calS_i$ by $\calS_i \cup \calS$.
}
}
\Return $\{ \prod_{i=1}^r \fraka_i \mid \fraka_i \in \calS_i \}$
\end{algorithm}
\end{alg}
\begin{proof}
Corollary~\ref{cor:Froehlich} shows that the sets $\calS_i$ only contain invertible ideals between $\frakq_i$ and $\frakp_i$.
Thus the output of the algorithm only contains invertible ideals of $\OO$ between $p\Lambda$ and $\JJ_p(\Lambda)$.
Conversely, suppose that $\fraka$ is such an ideal.
Then $\fraka = \prod_{i=1}^s \fraka_i$ where $\fraka_i$ is $\frakp_i$-primary and invertible.
Thus $\frakA:= \fraka_i  \Lambda$ is an invertible ideal of $\Lambda$ between $\frakq_j\Lambda$ and $\frakp_i \Lambda$.
Corollary~\ref{cor:Froehlich} implies that $[\Lambda:\fraka_i \Lambda] = [\OO : \fraka_i]$, so $\fraka_i$ is contained in the set $\calS_i$.
This shows that $\fraka$ will be contained in the set returned by the algorithm.
\end{proof}

Suppose $I\OO_p = x\OO_p$. Let $H$ be the sublattice of $\OO$ such that $H_p = x^{-1} I_p$ and $H_q = \OO_q$ for all primes $q \ne p$.
Then the conditions \eqref{eq:cond} for $I$ are equivalent to
\begin{equation}\label{eq:xH}
 p/x \in H_p,\quad x H_p^2 \subseteq H_p,\quad H^r = \OO \:.
\end{equation}
The condition $\OO = (\OO H)^r$ then implies that $\OO H$ is locally free and thus $\OO H = \OO$.

\begin{remark}\label{rem:aIncl}
In the situation above, let $\fraka = I\OO$.
Then $x^{r-1} \OO_p \subseteq H_p \subseteq \OO_p$ and thus $\fraka^{r} \subseteq I \subseteq \fraka$.
\end{remark}
\begin{proof}
See the proof of Remark \ref{rem:pOIncl}.
\end{proof}

\medskip
\cite[Lemma 3.3]{FHK} claims that if $H$ is a full lattice in $K$ with $\OO H = \OO$ then $H_p$ contains a unit of $\OO_p^*$.
But this statement is wrong as the following example shows.

\begin{example}
Let $\OO = \Z_K$ where $K = \Q[X]/(X^3 - X^2 - 10X + 8)$.
Then $2\OO = \frakP_1 \frakP_2 \frakP_3$ and $\OO / \frakP_i \isom \F_2$.
Fix any isomorpism $\varphi \colon \OO/2\OO \to \F_2^3$.
Let $H$ be the full preimage of $\langle (1,1,0),\, (0,1,1) \rangle$ under $\varphi$.
By construction, $H$ does not lie in any of the $\frakP_i$ and $H/2\OO \isom H_2 / 2\OO_2$ does not contain any unit of $(\OO_2/2\OO_2)^* \isom (\OO/2\OO)^* \isom \{1\}$.
Hence $\OO H = \OO$ and $H_2$ does not contain a unit of $\OO_2^*$.
\end{example}

What is correct however is the following:

\begin{lemma}
Let $H$ be a full lattice in $K$ with $\OO H = \OO$ and let $s$ be the number of prime ideals of $\OO$ over $p$.
If $p \ge s$, then $H_p$ contains a unit of $\OO_p^*$.
\end{lemma}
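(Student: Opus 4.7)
The plan is to reduce modulo the Jacobson radical of $\OO_p$ and then apply a classical bound on covering a vector space by proper subspaces.

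First, I would observe that $H \subseteq \OO$: for any $h \in H$ we have $h = 1 \cdot h \in \OO H = \OO$. Completing at $p$ gives $H_p \subseteq \OO_p$ and $\OO_p H_p = \OO_p$. Set $\bar\OO := \OO_p / \JJ(\OO_p)$. By the Chinese Remainder Theorem, $\bar\OO \isom \prod_{i=1}^s k_i$ where $k_i := \OO/\frakp_i$ is the residue field at the $i$-th prime of $\OO$ over $p$. Since $p \in \JJ(\OO_p)$, the ring $\bar\OO$ is annihilated by $p$ and is thus an $\F_p$-algebra. Let $V \subseteq \bar\OO$ be the image of $H_p$; this is an $\F_p$-subspace.

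The hypothesis $\OO_p H_p = \OO_p$ reduces modulo $\JJ(\OO_p)$ to $\bar\OO \cdot V = \bar\OO$, which in the product of fields $\prod k_i$ simply says that $V$ is contained in no maximal ideal. Writing $\pi_i \colon \bar\OO \to k_i$ for the $i$-th projection, this means that $V_i := V \cap \ker \pi_i$ is a \emph{proper} $\F_p$-subspace of $V$ for every $i$. Since $\JJ(\OO_p)$ is the Jacobson radical, any element of $\OO_p$ whose reduction in $\bar\OO$ lies in no maximal ideal is already a unit. Producing the desired unit in $H_p$ therefore amounts to producing an element of $V$ that avoids $V_1 \cup \cdots \cup V_s$.

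The final step is the standard fact that an $\F_p$-vector space cannot be covered by $s$ proper subspaces whenever $s \le p$: each proper $\F_p$-subspace has index at least $p$, so one computes $|V_1 \cup \cdots \cup V_s| \le \sum_i (|V_i| - 1) + 1 \le s|V|/p - s + 1 < |V|$ for $s \ge 2$, and the case $s = 1$ is trivial. Since we assume $p \ge s$, some $v \in V$ lies outside every $V_i$, and any preimage in $H_p$ maps to a unit of $\bar\OO$ and is hence a unit of $\OO_p^*$. The only delicate point is to make sure $\bar\OO$ is genuinely an $\F_p$-algebra—this is where $p \in \JJ(\OO_p)$ is used—so that the sharp covering bound applies with $p$ rather than a weaker quantity.
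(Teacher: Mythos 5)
Your argument is correct. The paper gives no proof of its own here and simply cites \cite[Lemma 2.2.7]{DTZ}; your reduction modulo $\JJ(\OO_p)$ followed by the standard ``a vector space over $\F_p$ is not the union of $p$ or fewer proper subspaces'' covering bound is precisely the classical argument behind that reference, with all the delicate points (the inclusion $H\subseteq\OO$, the passage to the semisimple quotient $\prod_i k_i$, lifting units across the Jacobson radical, and the role of $p\in\JJ(\OO_p)$ in making $\bar\OO$ an $\F_p$-algebra so the bound applies with $p$) correctly handled.
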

\begin{proof}
See the proof of \cite[Lemma 2.2.7]{DTZ}.
\end{proof}

The following result generalizes Remark~\ref{rem:suborder}.

\begin{lemma}\label{lemma:uH}
Let $H$ be a sublattice of $\OO$ with $[\OO : H]$ a $p$-power such that $\OO H = \OO$.
Then $H^r \ne \OO$ for all $r \ge 1$ if and only if $uH_p \subseteq \Lambda_p$  for some $u \in \OO_p^*$ and some maximal suborder $\Lambda_p$ of $\OO_p$.
\end{lemma}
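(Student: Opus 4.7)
The plan is to dispatch the easy direction in a line and then attack the converse by passing to the local picture at $p$ and splitting on whether $H_p$ contains a unit of $\OO_p^*$.

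For the $\Leftarrow$ direction, suppose $u H_p \subseteq \Lambda_p$. Then $u^r H_p^r \subseteq \Lambda_p^r = \Lambda_p$, so $H_p^r = \OO_p$ would force $\OO_p = u^r \OO_p \subseteq \Lambda_p$, contradicting $\Lambda_p \subsetneq \OO_p$. Since $[\OO : H]$ is a $p$-power, $H_q = \OO_q$ at every prime $q \ne p$, so $H^r = \OO$ is equivalent to $H_p^r = \OO_p$, and the first direction is immediate.

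For the $\Rightarrow$ direction, set $R := \OO_p$ and assume $H_p^r \ne R$ for every $r \ge 1$. Suppose first that $H_p$ contains a unit $\alpha \in R^*$. Then $\alpha^{-1} H_p$ contains $1$, so the chain $\alpha^{-1} H_p \subseteq (\alpha^{-1} H_p)^2 \subseteq \cdots$ is increasing in the Noetherian $\Z_p$-module $R$ and stabilizes at a $\Z_p$-subring $S \subseteq R$. Because $(\alpha^{-1} H_p)^r = \alpha^{-r} H_p^r \ne R$ for every $r$, we have $S \subsetneq R$, so $S$ sits inside some maximal suborder $\Lambda_p$ of $R$; taking $u = \alpha^{-1}$ completes this case.

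Suppose instead that $H_p$ contains no unit, and pass to $\bar H \subseteq \bar R := R/\JJ_p(R) = \prod_{i=1}^s k_i$. Then $\bar H$ projects nontrivially to each $k_i$ but is contained in $\bigcup_i \ker \pi_i$. If the $\F_p$-subalgebra $L_i := \F_p[\pi_i(\bar H)]$ is a proper subfield of some $k_i$, enlarge $L_i$ to a maximal proper subfield $L_i' \subsetneq k_i$; then the preimage in $R$ of $\{y \in \bar R : \pi_i(y) \in L_i'\}$ is a maximal suborder of type (1) containing $H_p$ by Lemma~\ref{min1}, and $u = 1$ works. Otherwise $L_i = k_i$ for every $i$; then the union-of-proper-subspaces bound forces $s \ge p+1$, and a combinatorial argument (the content of \cite[Lemma~2.2.7]{DTZ}) produces distinct indices $i, j$ together with a Frobenius-twist identification $\pi_i = \sigma \circ \pi_j$ on $\bar H$, which places $\bar H$ in the image of a maximal suborder of type (2) by Lemma~\ref{min2}. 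The main obstacle is this final combinatorial step, which exploits both ``no unit'' and ``$L_i = k_i$ for all $i$'' to pin down a diagonal identification of two residue fields; the preceding arguments then handle all remaining configurations.
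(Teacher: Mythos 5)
Your $\Leftarrow$ direction and your Case 1 (where $H_p$ contains a unit $\alpha$) are fine, and Case 1 is a nice elementary argument: the chain $(\alpha^{-1}H_p)^m$ is increasing because $1 \in \alpha^{-1}H_p$, stabilizes at a subring $S \subseteq R$, which must be proper since otherwise $H_p^m = \alpha^m R = R$, and $S$ sits inside a maximal suborder. The paper instead invokes \cite[Theorem 2.2.6 and Corollary 2.2.17]{DTZ} to produce, directly and in all cases at once, a $u$ with $X_p:=(uH_p)^{n-1}$ a proper suborder satisfying $(uH_p)X_p = X_p$, and then checks $u\in\OO_p^*$; so your route is genuinely different there.

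However, Case 2b has a gap, and it is not a small one. You assert that when $H_p$ has no unit and $L_i = k_i$ for all $i$, ``a combinatorial argument (the content of \cite[Lemma~2.2.7]{DTZ}) produces distinct indices $i,j$ together with a Frobenius-twist identification $\pi_i = \sigma\circ\pi_j$ on $\bar H$.'' This is false as stated, and the paper's own example right after this lemma falsifies it: take $\OO = \Z_K$ with $K = \Q[X]/(X^3 - X^2 - 10X + 8)$, $p=2$ split, and $\bar H = \langle (1,1,0),(0,1,1)\rangle \subseteq \F_2^3$. Then $H_p$ has no unit, $s = 3 \ge p+1$, and $L_i = k_i = \F_2$ for all $i$, so you are in Case 2b, but the three projections $\pi_1,\pi_2,\pi_3$ are pairwise distinct on $\bar H$ (since all Frobenius twists of $\F_2$ are trivial, you would need $\pi_i=\pi_j$ outright) — no such pair exists. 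What actually happens in that example is that $\bar H^2 = \bar R$, so $H^2 = \OO$: the hypothesis $H^r\ne\OO$ for all $r$ fails. The root of the problem is that your Case 2 never uses the hypothesis $H^r\ne\OO$ at all; Cases 2a and 2b are phrased purely in terms of $\bar H$. Any correct argument must invoke the hypothesis, because the conclusion ``$H_p$ lies in a maximal suborder'' is \emph{strictly stronger} than ``$H_p$ has no unit and $\OO H = \OO$'' (that is precisely the point of the example). Two further concerns: (i) the citation is misattributed — \cite[Lemma~2.2.7]{DTZ} is the statement that $p\ge s$ implies $H_p$ contains a unit, which you already used for the bound $s\ge p+1$; it does not produce a diagonal identification; (ii) your Case 2 analysis takes place entirely modulo $\JJ(R)$, which is blind to type-(3) maximal suborders (they do not contain $\JJ(R)$, so they become all of $\bar R$ after reduction), and you also suppress the available unit $u$ (you only produce $u=1$ in Case 2), so even under the hypothesis it is not clear the argument closes. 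The paper sidesteps all of this by handing the hard existence statement to \cite{DTZ} wholesale.
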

\begin{proof}
If $uH_p \subseteq \Lambda_p$ then $H_p^r \subseteq u^{-r} \Lambda_p \subsetneq \OO_p$.
Conversely, suppose $H^r \ne \OO$ for all $r \ge 1$.
By \cite[Theorem 2.2.6 and Corollary 2.2.17]{DTZ} there exists some $u \in K^*$ such that $X_p:= (u H_p)^{n-1}$ is a proper suborder of $\OO_p$ and $(u H_p) X_p = X_p$.
In particular, $uH_p \subseteq X_p$ and $\OO_p = \OO_p (uH_p) = u \OO_p$ implies $u \in \OO_p^*$.
Thus we can choose $\Lambda_p$ to be any maximal suborder of $\OO_p$ containing $X_p$.
\end{proof}

The next result is the analogue of Lemma~\ref{lem:descent}.

\begin{lemma}\label{unit}
Let $I$ be a sublattice of $\OO$ with $p \in I$ such that $[\OO:I]$ is a $p$-power and $\fraka := I \OO \subseteq \JJ_p(\OO)$ is an invertible ideal of $\OO$.
Let $\frakb \subseteq \JJ_p(\OO)^2 + p\OO$ be an ideal of $\OO$ such that $\fraka^2 \frakb \subseteq I$ and let $\tilde{I}:= I + \fraka \frakb$.
Then $I$ satisfies \eqref{eq:cond} if and only if $\tilde{I}$ does and $\tilde{I}^2 \subseteq I$.
\end{lemma}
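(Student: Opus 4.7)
The plan is to unpack each part of condition \eqref{eq:cond} for $I$ versus $\tilde{I}$ using the identity $(I + \fraka \frakb)^r = I^r + \fraka^r \frakb$ from the remark preceding the lemma, which holds for every $r \ge 1$. Combined with the hypothesis $\fraka^2\frakb \subseteq I$ and the inclusion $\frakb \subseteq \JJ_p(\OO)^2 + p\OO \subseteq \JJ_p(\OO)$, this will transport both the closure-under-multiplication condition and the invertibility condition back and forth between $I$ and $\tilde{I}$.

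\textbf{Forward direction.} Assuming $I$ satisfies \eqref{eq:cond}, I would fix the minimal $1 \le r < n$ for which $I^r$ is invertible. Then $I^r$ is an ideal of $\OO$, so $I^r = I^r\OO = \fraka^r$. Applying the remark with exponent $r$ gives
\[ \tilde{I}^r = I^r + \fraka^r \frakb = \fraka^r + \fraka^r\frakb = \fraka^r,\]
which is invertible. Applied with exponent $2$, the remark gives $\tilde{I}^2 = I^2 + \fraka^2\frakb \subseteq I$, using $I^2 \subseteq I$ and the standing hypothesis $\fraka^2 \frakb \subseteq I$. Finally, $p \in I \subseteq \tilde{I}$, and $\tilde{I} = I + \fraka\frakb \subseteq \JJ_p(\OO)$ because $\fraka\frakb \subseteq \fraka \subseteq \JJ_p(\OO)$. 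Hence $\tilde{I}$ satisfies \eqref{eq:cond} (with minimal exponent at most $r$), and $\tilde{I}^2 \subseteq I$.

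\textbf{Reverse direction.} Assume $\tilde{I}$ satisfies \eqref{eq:cond} and $\tilde{I}^2 \subseteq I$. The chain $p \in I \subseteq \tilde{I} \subseteq \JJ_p(\OO)$ and $I^2 \subseteq \tilde{I}^2 \subseteq I$ are immediate. The only nontrivial point is that $I^r$ is invertible for some $1 \le r < n$. I would pick $r$ minimal for $\tilde{I}$, so $\tilde{I}^r = \tilde{I}^r\OO = \fraka^r$ (using $\tilde{I}\OO = \fraka$, which follows from $\fraka\frakb \subseteq \fraka$ and $I\OO = \fraka$). By the remark, $I^r + \fraka^r \frakb = \fraka^r$. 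Viewing $M := \fraka^r/I^r$ as a finitely generated $\OO$-module, this equality reads $\frakb \cdot M = M$. Since $[\OO : I]$ is a $p$-power, so is $[\fraka^r : I^r]$ (locally $\fraka^r$ is principal and $I^r_q = \OO_q$ for $q \ne p$), hence $M$ is $p$-primary, and it suffices to show $M_p = 0$. Since $\frakb \subseteq \JJ_p(\OO)$, we have $\frakb_p \subseteq \JJ(\OO_p)$, so Nakayama's lemma for the semi-local ring $\OO_p$ yields $M_p = 0$. Therefore $I^r = \fraka^r$ is invertible.

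\textbf{Expected main obstacle.} The delicate step is the Nakayama argument: the remark as stated covers only $\frakb \subseteq p\OO$, whereas here $\frakb \subseteq \JJ_p(\OO)^2 + p\OO$. This is still inside $\JJ_p(\OO)$, which is exactly what the semi-local Nakayama lemma over $\OO_p$ needs, but this mild generalization of the remark has to be spelled out, together with a sanity check that the module $M = \fraka^r/I^r$ is finitely generated and $p$-primary. Everything else is bookkeeping with the explicit formula from the remark.
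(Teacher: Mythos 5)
Your proof is correct, and for the reverse direction it is genuinely different from the paper's argument. The forward direction and the use of the identity $\tilde{I}^r = I^r + \fraka^r\frakb$ match the paper exactly. Where you diverge is the nontrivial implication ``$\tilde{I}$ satisfies \eqref{eq:cond} and $\tilde{I}^2\subseteq I$ implies $I^r$ invertible''. The paper does \emph{not} use Nakayama here: it passes to $H_p = x^{-1}I_p$, invokes Lemma~\ref{lemma:uH} (which in turn rests on the Dade--Taussky--Zassenhaus machinery) to produce a unit $u\in\OO_p^*$ and a maximal suborder $\Lambda_p$ with $uH_p\subseteq\Lambda_p$, applies Corollary~\ref{cor:JLambda} to show $\frakb_p\subseteq\Lambda_p$, and then derives a contradiction from $\tilde{H}^{n-1}=\OO$. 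You instead apply Nakayama's lemma over the semi-local ring $\OO_p$ to the finitely generated, $p$-primary $\OO$-module $M = \fraka^r/I^r$, using $\frakb_p\subseteq\JJ(\OO_p)$, which you correctly identify as the natural strengthening of the unnumbered Remark (stated there only for $\frakb\subseteq p\OO$, where Nakayama over $\Z_p$ suffices) to $\frakb\subseteq\JJ_p(\OO)$. Your route is more elementary and self-contained: it avoids Lemma~\ref{lemma:uH} and the DTZ results entirely, and makes the parallel with the preceding Remark transparent. The paper's route has the structural merit that it reuses machinery (Lemma~\ref{lemma:uH}, Corollary~\ref{cor:JLambda}) already set up for the ramified case, but it is not shorter. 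Your bookkeeping of the hypotheses ($M$ is $p$-primary because $I_q=\OO_q=\fraka_q$ for $q\ne p$; $\frakb_p\subseteq\JJ_p(\OO)_p=\JJ(\OO_p)$) is correct and fills exactly the gap you flagged.
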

\begin{proof}
Suppose first $I$ satisfies \eqref{eq:cond}.
Then $\tilde{I}^2 = I^2 + \fraka^2 \frakb \subseteq I \subseteq \tilde{I}$ and $I^r = \fraka^r$ implies $\tilde{I}^r = I^r + \fraka^r \frakb = \fraka^r $.
Suppose now $\tilde{I}$ satisfies \eqref{eq:cond} and $\tilde{I}^2 \subseteq I$.
Then $\tilde{I}^2 \subseteq I^2 \subseteq \tilde{I}$.
Let us assume that $\tilde{I}^r \ne \fraka^r$ for all $r \ge 1$.
Write $\fraka_p = x \OO_p$ and set $H = x^{-1}I$ and $\tilde{H} = x^{-1} \tilde{I}$.
Then $H_p^r \ne \OO_p$ for all $r \ge 1$.
By Lemma~\ref{lemma:uH}, there exists some maximal suborder $\Lambda$ of $\OO$ and some unit $u \in \OO_p^*$ such that 
\[ \Lambda_p \supseteq u \tilde{H}_p = u (H_p + \frakb_p) = u H_p + \frakb_p \:. \]
Corollary~\ref{cor:JLambda} shows that $\frakb_p \subseteq \Lambda_p$ so the above inclusion implies $u H_p \subseteq \Lambda_p$.
This gives the contradiction
\[ \fraka_p^{n-1} = I_p^{n-1} = (x u H_p)^{n-1} \subseteq x^{n-1} \Lambda_p \subsetneq x^{n-1} \OO_p = \fraka_p^{n-1} \:. \]
This shows that $\tilde{I}^r = \fraka^r$ for some $r \ge 1$.
\end{proof}

Let  $I$ be a lattice which satisfies \eqref{eq:cond} with $\fraka = I \OO$.
For $1 \le i \le r$ let $I_i = I + \fraka^i (\JJ_p(\OO)^2 + p\OO)$. Remark~\ref{rem:aIncl} implies $I_r = I$ and Lemma~\ref{unit} shows that all the lattices in the chain
\[ I_1 \supsetneq I_2 \supsetneq \ldots \supsetneq I_r = I \]
satisfy \eqref{eq:cond}. Moreover, the lemma also shows how to compute all such lattices $I_{j+1}$ provided that $I_j$ is already known.

So all that remains to be done is to find the first lattices $I_1$ in such a chain, i.e. the lattices $I \subseteq \JJ_p(\OO)$ satisfying \eqref{eq:cond} with $\fraka (\JJ_p(\OO)^2 + p\OO)\subseteq I$.
If $\fraka = p\OO$, we can use Algorithm~\ref{alg:unram}. So we may assume that $\fraka \ne p\OO$.
Let $s$ be the number of prime ideals of $\OO$ over $p$.
If $p<s$ is small, we use an exhaustive search. If $p \ge s$, we make use of Algorithm~\ref{alg:unram} once more.

\begin{alg}\label{alg:ram}\mbox{}\\
\begin{algorithm}[H]
\DontPrintSemicolon
\KwIn{An order $\OO$, a prime $p$ and an invertible ideal $p\OO \subsetneq \fraka \subseteq \JJ_p(\OO)$ of $\OO$.}
\KwOut{A list of all lattices $I$ satisfying \eqref{eq:cond} such that $I\OO = \fraka$ and $\fraka(\JJ_p(\OO)^2 + p\OO) \subseteq I$.}
Let $s$ be the number of prime ideals of $\OO$ over $p$.\;
\If{$p<s$}{
Enumerate the lattices between $p\Z + \fraka(\JJ_p(\OO)^2 + p\OO)$ and $\fraka$ explicitly.\;
Return all lattices $I$  with $I\OO = \fraka$ that satisfy \eqref{eq:cond}.
}\Else{
Pick $x \in \fraka$ with $\fraka_p = x \OO_p$ and set $\calL = ()$.\;
Let $\calM$ be the output of Algorithm~\ref{alg:unram}.\;
Compute a generating set $\calS$ for $U := (\OO / (\JJ_p(\OO)^2 + p\OO))^*$, see \cite[Lemma 4.3]{Picard}.\;
By orbit enumeration using $\calS$, compute $\calH = \{ u H \mid H \in \calM,\, u+\JJ_p(\OO)^2 + p\OO \in U,\, \JJ_p(\OO)^2 + p\OO \subseteq H \}$.\;
\For{$H \in \calH$}{
If $I := xH + p^n\OO$ satisfies \eqref{eq:cond}, append it to $\calL$.
}
\Return $\calL$.
}
\end{algorithm}
\end{alg}
\begin{proof}
The case $p < s$ is clear. So suppose $p\ge s$.
Let $I$ be a lattice that satisfies \eqref{eq:cond} with $I\OO = \fraka$ and $\fraka(\JJ_p(\OO)^2 + p\OO) \subseteq I$.
We need to show that $I \in \calL$.
Let $H:= x^{-1} I  \cap \OO$. Completing at $p$ shows that $I = xH + p\OO$.
So we have to show that $H \in \calH$.
From $I^r = \fraka^r$ we get $H_p^r = x^{-r} \fraka_p^r = \OO_p$ and thus $H^r = \OO$ and $H\OO = \OO$.
Similarly, completing at $p$ also shows that $\JJ_p(\OO)^2 + p\OO \subseteq H \subseteq \OO$.
Thus $pH^2 \subseteq p\OO^2 \subseteq H$.
Lemma~\ref{unit} shows that there exists some unit $u \in \OO_p^*$ such that $1 \in uH_p$.
Let $\pi \in \JJ_p(\OO_p)^2 + p\OO_p$ such that $u + \pi \in \OO^*$.
Then $(u + \pi) H_p = uH_p + \pi H_p \subseteq uH_p + (\JJ_p(\OO_p)^2 + p\OO_p) \subseteq uH_p$.
Hence the $U$-orbit of $H$ contains a lattice $H'$ with $1 \in H'$.
Thus $H'$ satisfies \eqref{eq:II}. 
Hence $H' \in \calM$ and thus $H \in \calH$ as claimed.
\end{proof}

\section{Orders containing a fixed order and examples}\label{sec:over}

Below are some enumeration of orders. We start with orders of bounded index in $\Z_K$.
All examples were run 
on an Intel Core i7-9700 with 32 GB of RAM
in Julia 1.10.4 using the package Hecke 0.33.5, see \cite{Hecke}.

\begin{example}\label{ex:count1}
Let $K = \Q(x) / (f)$ where $f \in \Q[X]$ is given below.
For $i \ge 1$ let 
\begin{align*}
 \mathfrak{O}_i &:= \{ \OO \subseteq \Z_K \mid \OO \mbox{ an order with } [\Z_K : \OO] \le 5^i  \} \\
  m_i &:= \min\{ j\ge 1 \mid 5^j \Z_K \subseteq \OO  \mbox{ for all } \OO \in \mathfrak{O}_i \} = i \:.
\end{align*}
We clearly have $m_i \le i$ for all $i \ge 1$.
\begin{enumerate}
\item
Let $f = x^5 - x^4 - 12x^3 + 21x^2 + x - 5$. Then $5$ is completely split in $K$.
The table below gives the number of orders in $\mathfrak{O}_i$ for $1 \le i \le 10$ as well as the number of seconds needed by Algorithm~\ref{alg:orders} to enumerate these orders.
\begin{center}
\begin{tabular}{l|cccccccccc}
$i$ & 1 & 2 & 3 & 4 & 5 & 6 & 7 & 8 & 9 & 10 \\ \hline 
$\# \mathfrak{O}_i$ & 11 & 46 & 161 & 602 & 2173 & 7619 & 29515 & 103161 & 350532 & 1180503 \\
sec. & 0.2 & 0.7 & 2.1 & 5.7 & 13.0 & 32.2 & 75 & 181 & 641 & 2318
\end{tabular}
\end{center}
Let $\frakp \ne \frakp'$ be two prime ideals of $\Z_K$ over $5$.
By Lemma \ref{min2} there exists a unique maximal suborder $\OO_1$ of $\OO_0:= \Z_K$ with non-invertibe maximal ideal $\frakp_1:= \frakp \cap \frakp'$.
Moreover $[\OO_0 : \OO_1] = 5$.
Suppose now $\OO_j$ has already been chosen and let $\frakp_j$ be its unique non-invertible maximal ideal.
By Corollary~\ref{exists3} there exists a maximal suborder $\OO_{j+1}$ of type (3) in $\OO_j$.
It again has a unique non-invertible maximal ideal over $5$ and $[\OO_j : \OO_{j+1}] = 5$.
So we have $[\Z_K : \OO_i] = 5^i$ and thus $\OO_i \in \mathfrak{O}_i$.
By construction, the largest elementary divisor of $\Z_K / \OO_i$ is $5^i$ and thus $m_i = i$.
\item Let $f = x^5 + x^3 - x^2 - x - 1$. Then $5$ is inert in $K$.
By Lemma~\ref{min1} and Corollary~\ref{exists3}, there exists only one maximal suborder $\OO$ of $\Z_K$ with $[\Z_K :\OO]$ a power of $5$ and $[\Z_K :\OO ] = 5^4$.
Thus
\[ \OO_1 = \OO_2 = \OO_3 = \{ \Z_K \} \mbox{ and }  \OO_4 = \{ \Z_K, \OO \} \:. \]
Hence $m_1=m_2 =m_3 = 0$ and $m_4 = 1$. Below is the time needed to enumerate $\mathfrak{O}_i$ for $4 \le i \le 10$ using Algorithm~\ref{alg:orders}.
\begin{center}
\begin{tabular}{l|ccccccc}
$i$ & 4 & 5 & 6 & 7 & 8 & 9 & 10 \\ \hline
$\# \mathfrak{O}_i$ & 2 & 158 & 964 & 1120 & 5801 & 9857 & 49663 \\
$m_i$ & 1&2&2&2&3&3&4\\
sec. & 1.4 & 3.2 & 4.1 & 4.2 & 4.4 & 5.6 & 8.4 
\end{tabular}
\end{center}
\end{enumerate}
So wee see that the number of orders in $\mathfrak{O}_i$ and therefore the time needed to enumerate these orders depends heavily on the decomposition of $5 \Z_K$.
\end{example}

Suppose $\Lambda$ is an order in $K$ and we are interested in enumerating the orders between $\Lambda$ and $\Z_K$.
As in Remark~\ref{order_pe}, it suffices to find those orders whose index in $\Z_K$ is a power of some prime divisor $p$ of $[\Z_K : \Lambda]$.

Of course, we could simply apply Algorithm \ref{alg:orders} and dismiss the orders which do not contain $\Lambda$.
If one replaces $\Z$ by $\frac{1}{p} \JJ_p(\Lambda)$ in Algorithms \ref{alg:start1} and \ref{alg:unram}, this considerably reduces the number of orders that need to be dismissed.
Similarly, for ramified primes, one only needs to enumerate the ideals $I$ satisfying \eqref{eq:cond} lying over $\JJ_p(\Lambda)$.

We implemented this approach in Hecke \cite{Hecke} and compared it with the build in algorithm of Hofmann and Sircana \cite{Hofmann}.
It is worthwhile to mention that the algorithm of Hofmann and Sircana works for orders in (not necessarily commutative) separable algebras over $K$.

\begin{example}
This example is \cite[Example 6.5]{Hofmann}. Let $\Lambda = \Z[x]/(f)$ where
\[ f = x^5 + 46627x^4 + 26241066x^3 + 2331020454x^2 + 200947680677x + 143628091723623 \:. \]
Then $\Lambda = \Lambda_2 \cap \Lambda_{29}$ where the orders $\Lambda_2$ and $\Lambda_{29}$ have index $2^{30}$ and $29^{10}$ in the maximal order respectively.
\begin{center}
\begin{tabular}{l|ccc}
order & \# overorders & Alg. \ref{alg:start1} & Alg. 5.16 of \cite{Hofmann} \\ \hline 
$\Lambda_2$    & 4027 & 0.8s & 4.0s \\
$\Lambda_{29}$ & 1777 & 0.4s & 3.7s
\end{tabular}
\end{center}
\end{example}

\begin{example}\label{eq:count3}
We give timings of some enumerations of overorders in number fields $K$ of degree $5$.
In each case, we set $\OO_i = \Z_K + 5^i \Z_K$ for $i \ge 1$.
\begin{enumerate}
\item
As in Example~\ref{ex:count1} let $K = \Q(x) / (f)$ where $f = x^5 - x^4 - 12x^3 + 21x^2 + x - 5$.
Then $5$ is completely split in $K$.
\begin{center}
\begin{tabular}{l|ccc}
order & \# overorders & Alg. \ref{alg:start1} & Alg. 5.16 of \cite{Hofmann} \\ \hline
$\OO_1$ & 52      & 0.1s & 0.0s \\
$\OO_2$ & 1761    & 1.0s & 3.3s \\
$\OO_3$ & 58720   & 16.2s & 120s \\
$\OO_4$ & 1642549 & 495s & 5328s
\end{tabular}
\end{center}
\item 
Let $K = \Q(x) / (f)$ where $f = x^5 + x^3 - x^2 - x - 1$. Then $5$ is inert in $K$.
From Lemmata \ref{min1} and \ref{min2} as well as Corollary \ref{exists3} we see that, any maximal suborder $\OO$ of $\Z_K$ over $5\Z_K$ is of the form $\Z + \JJ_p(\OO)$.
Hence this holds for any suborder $\OO$ of $\Z_K$ where $[\Z_K : \OO]$ is a power of $5$.
This speeds up the enumeration of all possible $p$-radicals over $p=5$ in Algorithm~\ref{alg:start1}.
\begin{center}
\begin{tabular}{l|ccc}
order & \# overorders & Alg. \ref{alg:start1} & Alg. 5.16 of \cite{Hofmann} \\ \hline
$\OO_1$ & 2      & 0.0s & 0.0s \\
$\OO_2$ & 1121   & 0.3s & 1.5s \\
$\OO_3$ & 10820  & 1.6s & 23s \\
$\OO_4$ & 840169 & 152s & 2020s
\end{tabular}
\end{center}
We checked that among the $840169$ orders containing $\OO_4$, exactly $49663 $ have index at most $5^{10}$ in $\Z_K$, as is expected by Example \ref{ex:count1}.
\item Let $K = \Q(x) / (f)$ where $f = x^5 - 10x^3 - 5x^2 + 10x - 1$. Then $5$ is completely ramified.
\begin{center}
\begin{tabular}{l|ccc}
order & \# overorders & Alg. \ref{alg:start1} & Alg. 5.16 of \cite{Hofmann} \\ \hline
$\OO_1$ & 15      & 0.4s & 0.0s \\
$\OO_2$ & 1214    & 0.7s & 1.8s \\
$\OO_3$ & 23063   & 4.7s & 41s \\
$\OO_4$ & 1006662 & 215s & 2592s
\end{tabular}
\end{center}
If the index $[\OO_i : \Z_K]$ is small, Algorithms \ref{alg:start1}, \ref{alg:invertible} and \ref{alg:ram} simply have too much overhead to be competitive.
However, the larger the index $[\OO_i : \Z_K]$ becomes, the better our approach compares.
\end{enumerate}
\end{example}

\bibliography{orders}

\end{document}